\newtheorem{theorem}{Theorem}[section]
\newtheorem{lemma}[theorem]{Lemma}
\newtheorem{prop}[theorem]{Proposition}
\newtheorem{conjecture}[theorem]{Conjecture}
\theoremstyle{definition}
\newtheorem{definition}[theorem]{Definition}
\newtheorem{example}[theorem]{Example}
\newtheorem{shitu}{Situation}
\theoremstyle{remark}
\newtheorem{remark}[theorem]{Remark}
\numberwithin{equation}{section}
\newcommand{\bi}{{\bf i}}
\newcommand{\bC}{{\bf C}}
\newcommand{\bQ}{{\bf Q}}
\newcommand{\bR}{{\bf R}}
\newcommand{\bZ}{{\bf Z}}
\newcommand{\fb}{{\mathfrak b}}
\newcommand{\fg}{{\mathfrak g}}
\newcommand{\fm}{{\mathfrak m}}
\newcommand{\ft}{{\mathfrak t}}
\newcommand{\fI}{{\mathfrak I}}
\newcommand{\fL}{{\mathfrak L}}
\newcommand{\fU}{{\mathfrak U}}
\newcommand{\Z}{\bZ}
\newcommand{\Q}{\bQ}
\newcommand{\R}{\bR}
\newcommand{\C}{\bC}
\newcommand{\su}{\mathfrak{su}}
\newcommand{\SO}{{\rm SO}}
\newcommand{\SU}{{\rm SU}}
\renewcommand{\epsilon}{\varepsilon}
\def\({\mathopen{}\left(}
\def\){\right)\mathclose{}}
\def\<{\mathopen{}\left<}
\def\>{\right>\mathclose{}}
\definecolor{gold}{rgb}{0.85,.66,0}
\definecolor{cherry}{rgb}{0.9,.1,.2}
\definecolor{burgundy}{rgb}{0.8,.2,.2}
\definecolor{orangered}{rgb}{0.85,.3,0}
\definecolor{orange}{rgb}{0.85,.4,0}
\definecolor{olive}{rgb}{.45,.4,0}
\definecolor{lime}{rgb}{.6,.9,0}
\definecolor{green}{rgb}{.2,.7,0}
\definecolor{grey}{rgb}{.4,.4,.2}
\definecolor{brown}{rgb}{.4,.3,.1}
\def\makeautorefname#1#2{\AtBeginDocument{\expandafter\def\csname#1autorefname\endcsname{#2}}}
\begin{document}

\title{Atiyah-Floer Conjecture: a Formulation, a Strategy to Prove and Generalizations}

\author{Aliakbar Daemi}
\address{Simons Center for Geometry and Physics, State University of New York, Stony Brook, NY 11794}
\email{adaemi@scgp.stonybrook.edu}

\author{Kenji Fukaya}
\address{Simons Center for Geometry and Physics, State University of New York, Stony Brook, NY 11794 \& 
Center for Geometry and Physics, Institute for Basic Sciences (IBS), Pohang, Korea}
\email{kfukaya@scgp.stonybrook.edu}
\thanks{The second author was partially supported by NSF Grant No. 1406423 and the Simons Foundation through it Homological Mirror Symmetry Collaboration grant.}



\keywords{Differential geometry, Symplectic Topology, Low Dimensional Topology, Atiyah-Floer Conjecture}

\begin{abstract}
{Around 1988, Floer introduced two important theories: instanton Floer homology as invariants of 3-manifolds and Lagrangian Floer homology as invariants of pairs of Lagrangians in symplectic manifolds. Soon after that, Atiyah conjectured that the two theories should be related to each other and Lagrangian Floer homology of certain Lagrangians in the moduli space of flat connections on Riemann surfaces should recover instanton Floer homology. However, the space of flat connections on a Riemann surface is singular and the first step to address this conjecture is to make sense of Lagrangian Floer homology on this space. In this note, we formulate a possible approach to resolve this issue. A strategy to construct the desired isomorphism in the Atiyah-Floer conjecture is also sketched. We also use the language of $A_\infty$-categories to state generalizations of the Atiyah-Floer conjecture.}
\end{abstract}

\maketitle

\section{Introduction}
In a series of groundbreaking papers, Floer adapted methods of Morse homology to infinite dimensional settings and defined various homology theories. After the great success of Donaldson's work in 4-manifold topology, Floer employed Yang-Mills gauge theory to introduce an invariant of 3-manifolds, known as {\it instanton Floer homology} \cite{Fl:I}. In another direction, Floer built up on Gromov's theory of pseudo-holomorphic curves and defined {\it Lagrangian Floer homology} \cite{Fl:LHF}. The Atiyah-Floer conjecture \cite{At} concerns a relationship between instanton and Lagrangian Floer homologies.

Given an integral homology sphere $M$, instanton Floer homology determines an invariant of $M$, denoted by $I^*(M)$. This invariant is the homology of a chain complex $C_*(M)$ whose generators are identified with the non-trivial flat $\SU(2)$-connections on $M$. The differential of this chain complex is defined by an appropriate count of {\it anti-self-dual} connections on the 4-manifold $\R \times M$ which are asymptotic to flat connections on $M$. To follow this scheme more rigorously, flat $\SU(2)$-connections on $M$ are required to satisfy some non-degeneracy conditions. In general, one can avoid this issue by perturbing flat connections.

Lagrangian Floer homology is an invariant of a pair of Lagrangian submanifolds in a symplectic manifold. If $X$ is a symplectic manifold and $L_1$, $L_2$ are Lagrangian submanifolds of $M$, satisfying certain restrictive assumptions, then one can define Lagrangian Floer homology of $L_1$ and $L_2$, denoted by $HF(L_1,L_2)$. This invariant of $L_1$ and $L_2$ is the homology group of a chain complex $CF(L_1,L_2)$ whose set of generators can be identified with the intersection points of $L_1$ and $L_2$. The boundary operator for this chain complex is defined by an appropriate count of the number of pseudo-holomorphic strips which are bounded by $L_1$, $L_2$
and are asymptotic to given points in $L_1 \cap L_2$.

Let $\Sigma_g$ be a Riemann surface of genus $g\geq 1$. Then $R(\Sigma_g)$, the space of flat $\SU(2)$-connections on $\Sigma_g$ up to isomorphism, has a symplectic structure \cite{go}\footnote{To be more precise, the open dense subset of irreducible connections is a symplectic manifold.}. Suppose $M$ is an integral homology sphere with the following Heegaard splitting along the surface $\Sigma_g$:
\begin{equation}\label{handle-body}
M = H_g^0 \cup_{\Sigma_g}  H_g^1
\end{equation}
where $H^i_g$ is a handle-body of genus $g$. Then the space $R(H^i_g)$ of flat connections on $\Sigma_g$ which extend to flat connections of $H_g^i$ determines a `Lagrangian subspace' of $R(\Sigma_g)$. The Atiyah-Floer conjecture states that:
\begin{equation}\label{AF111}
I^*(M) \cong HF(R(H^1_g),R(H^2_g))
\end{equation}
where the right hand side is the Lagrangian Floer homology of $R(H^1_g)$ and $R(H^2_g)$, regarded as Lagrangians in $R(\Sigma_g)$. This Floer homology group is called the {\it symplectic instanton Floer homology} of $M$.


One of the main difficulties with the Atiyah-Floer conjecture is to give a rigorous definition of symplectic instanton Floer homology. The spaces $R(\Sigma_g)$, $R(H^1_g)$ and $R(H^2_g)$ are singular due to the existence of flat connections with non-trivial isotropy groups. One of the main goals of this article is to sketch the construction of a candidate for symplectic instanton Floer homology of integral homology spheres. Our method is mainly based on the construction of \cite{MW}. 

In \cite{MW}, Manolescu and Woodward replace $R(\Sigma_g)$ with another space $X$, which is a smooth closed manifold with a closed 2-form $\omega$. The 2-form $\omega$ is symplectic in the complement of a codimension 2 subspace $D\subset X$. For a Heegaard splitting as in \eqref{handle-body}, Lagrangian submanifolds $L_0,L_1$ of $ X\backslash D$ are also constructed in \cite{MW}. These Lagrangians are monotone in an appropriate sense.  A detailed study of the degenerate locus of $\omega$ in \cite{MW} allows Manolescu and Woodward to define Lagrangian Floer homology of $L_0$ and $L_1$. However, this version of symplectic instanton Floer homology is not expected to be isomorphic to $I^*(M)$. 

A variation of the triple $(X,\omega,D)$ leads to $(X_\theta,\omega_\theta,D)$ for $0<\theta<\frac{1}{2}$, where $\omega_\theta$ is a symplectic form on $X_\theta$. The submanifold  $D\subset X_\theta$ is a smooth divisor with respect to a K\"ahler structure in a neighborhood of $D$, compatible with the symplectic form $\omega_\theta$.  In Section \ref{sec:extendedmoduli}, we review the definition of $X$, $X_\theta$, which are respectively denoted by ${\widehat R}(\Sigma,p,\frac{1}{2})$, ${\widehat R}(\Sigma,p,\theta)$ and are constructed using the {\it extended moduli spaces of flat connections} \cite{Je,Hu}. 

For the Heegaard splitting in \eqref{handle-body}, we can also construct Lagrangian submanifolds $L_0, L_1\subset X_\theta\backslash D$. These Lagrangian manifolds are monotone only in the complement of $D$. Therefore, the general theory of \cite{Oh} or \cite{fooobook} is not strong enough to define Lagrangian Floer homology of $L_0$ and $L_1$. In Section \ref{sec:divisor complement}, we sketch the results of \cite{DF} on the construction of Floer homology of Lagrangians which are monotone in the complement of a divisor. There is also a group action of ${\rm PU}(2)$ on $X_\theta$, and the Lagrangians $L_0$ and $L_1$ are invariant with respect to this group action. Section \ref{sec:equivariant} concerns a review of equivariant Lagrangian Floer homology. In Section \ref{Sec:AFori}, we explain two different approaches that the action of ${\rm PU}(2)$ on $X_\theta$ can be combined with Floer homology of Lagrangians in divisor complements. In particular, we introduce two versions of symplectic instanton Floer homology $I^*_{symp}(M)$ and ${\overline I}^*_{symp}(M)$. We conjecture that $I^*_{symp}(M)$ is isomorphic to $I^*(M)$. This conjecture can be regarded as a rigorous version of the Atiyah-Floer conjecture. A possible approach to verify this version of the Atiyah-Floer conjecture is the content of Section \ref{modulispace}. There is a different approach to study the Atiyah-Floer conjecture proposed by Salamon. For this proposal and related works see \cite{Sa,Sawe, We, We2,Duncan}.

The Atiyah-Floer conjecture can be considered as a part of a broader program to define `instanton Floer homology of 3-manifolds with boundary'. Yang-Mills gauge theory in dimensions 2,3 and 4 is expected to produce the following structures:
\begin{enumerate}
\item
	For a 4-dimensional manifold and a ${\rm PU}(2)$-bundle, it defines a numerical invariant \`a la Donaldson.
\item
	For a 3-dimensional manifold and a ${\rm PU}(2)$-bundle, it defines a group which is instanton Floer homology.
\item
	For a 2-dimensional manifold and a ${\rm PU}(2)$-bundle, it defines a category.
\item
	The Donaldson invariants of a 4-manifold $N$ with boundary gives an element of the instanton Floer homology of the boundary of $N$.
\item
	Instanton Floer homology of a 3-dimensional manifold $M$ with boundary is expected to give an object of the category associated to $\partial M$.
\end{enumerate}
In 1992 and in a conference at the University of Warwick, Donaldson proposed a `first approximation' to the category associated to a Riemann surface $\Sigma$. In his proposal, an object of this category is a Lagrangian submanifold $L$ of the symplectic space $R(\Sigma)$. In this category, the set of morphisms between $L_1$ and $L_2$ should be defined as a version of Lagrangian Floer homology of $L_1$ and $L_2$. Then the Atiyah-Floer conjecture in \eqref{AF111} can be regarded as a consequence of the axioms of topological field theories. Of course, this plan faces with the same issue as before; the space $R(\Sigma)$ is singular and $HF(L_1,L_2)$ cannot be defined in a straightforward way. 

In Section \ref{Sec:conjectures}, we use the language of $A_\infty$-categories to state conjectures reagarding stronger versions of the above properties for instanton Floer homology in dimensions 2 and 3. In particular, we work with more arbitrary $G$-bundles on surfaces and 3-manifolds in this section. The conjectures of Section \ref{Sec:conjectures} are based on the second author's proposals in \cite{fu3}. In fact, his main motivation to introduce $A_\infty$-categories associated to Lagrangians in symplectic manifolds arises from studying this problem. In Section \ref{sec:generalgroup}, we will discuss how we expect that the extended moduli spaces of flat connections can be employed to produce these expected structure. The required results from symplectic geometry is discussed in Section \ref{Sec:ainfinity}. When working with a non-trivial $G$-bundle $\mathcal F$ over a Riemann surface, there are various cases that the moduli space $R(\Sigma,\mathcal F)$ of flat connections on $\mathcal F$ is smooth. For such choices of $\mathcal F$, there is an alternative approach to realize the above construction of 2- and 3-dimensional topological field theory based on Yang-Mills theory \cite{fu6,takagi,DFL}. The expected relationship between the two approaches is discussed in the last section of the paper.

In the present article, we only sketch the proofs of most theorems and a more detailed treatment will appear elsewhere. There are two main reasons for this style of writing. Firstly, these proofs contain some technical parts, and the limited available space in this proceedings journal does not allow us to include them in this article. Secondly, we hope that the current exposition together with the statements of the theorems and the conjectures gives the reader a better big picture of the projects related to this paper.

\subsubsection*{Acknowledgement.} We would like thank Max Lipyanskiy, Paul Seidel, Mohammad Tehrani, Michael Thaddeus and Aleksey Zinger for helpful discussions. We are very grateful to the support of the {\it Simons Center for Geometry and Physics}. We also thank the {\it IBS Center for Geometry and Physics} in Pohang for its hospitality during the authors' visit in the Summer of 2016.


\section{The Extended Moduli Spaces of Flat Connections}
\label{sec:extendedmoduli}

The most serious difficulty to define the symplectic counterpart of instanton Floer homology lies in the fact that the space of flat $\SU(2)$-connections on a Riemann surface is singular. In this section, we review Manolescu and Woodward's approach to go around this difficulty \cite{MW}. The main idea is to replace the moduli space of flat connections with the {\it extended moduli space of flat connections} \cite{Je,Hu}, which is a more nicely-behaved space. 
Fix a compact, simply connected and semi-simple Lie group $\widetilde G$, and let $G = \widetilde G/Z(\widetilde G)$ be the associated {\it adjoint group}. Here $Z(\widetilde G)$ denotes the center of $\widetilde G$, which is a finite group. The adjoint action of $\widetilde G$ on itself induces an action of $G$ on $\widetilde G$, which is denoted by $ad$. We will also write $\mathfrak g$ for the Lie algebra of $\widetilde G$ (or equivalently $G$). We identify $\fg$ with the dual of the Lie algebra $\fg^*$ using the inner product $-B(\cdot,\cdot)$ where $B(\cdot,\cdot)$  is the Killing form.

Suppose $\Sigma$ is an oriented and connected surface with genus $g \geq 1$ and $\mathcal F$ is a principal $G$-bundle over $\Sigma$. Define the {\it gauge group} $\mathcal G(\Sigma,\mathcal F)$ to be the space of all smooth sections of the bundle $\mathcal F \times_{ad} \tilde G$. Any element of the gauge group induces an automorphism of the bundle $P$. Fix a base point $p\in \Sigma$, an open neighborhood $U$ of $p$ and a trivialization of the fiber of $\mathcal F$ over $p$. Then the {\it based gauge group} is defined to be:
\begin{equation}
	\mathcal G_0(\Sigma,\mathcal F,p)= \{ g \mid g \in \mathcal G(\Sigma,\mathcal F),\,\,\, 
	g(q)= [e,\widetilde e]\in \mathcal F|_q \times_{ad} \tilde G \text{ for $q\in V\subseteq U$}\},
\end{equation}
where $e$ and $\widetilde e$ are respectively the unit elements of $G$ and $\widetilde G$ and $[e,\widetilde e]$ is the element of $\mathcal F|_q \times_{ad} \tilde G$ which is induced by the trivialization of $\mathcal F|_{q}$. Here $V$ is another open neighborhood of $q$ which is contained in $U$.

Let $\mathcal A_{\rm fl}(\Sigma,\mathcal F)$ denote the space of all flat connections on $\mathcal F$. Since the elements of $\mathcal G(\Sigma,\mathcal F)$ induce isomorphisms of $P$, this gauge group acts on $\mathcal A_{\rm fl}(\Sigma,\mathcal F)$. We will write $R(\Sigma,\mathcal F)$ for the quotient space. This quotient space has singular points in general. Away from the singular points, $R(\Sigma,\mathcal F)$ admits a symplectic structure \cite{go}. In the next definition, we recall the definition of a larger space which is a smooth symplectic manifold and can be used to replace $R(\Sigma,\mathcal F)$ for our purposes:

\begin{definition}
	Suppose $\mathcal A_{\rm ex}(\Sigma,F,p)$ consists of the pairs $(A,\xi)$ where $A$ is a flat connection on 
	$\Sigma\backslash \{p\}$ and $\xi \in \mathfrak g$. We require that there is an open neighborhood $V$ of $p$, included in $U$, 
	such that the restriction of $A$ to $V$ is equal to $\xi ds$
	with respect to the chosen trivialization of $\mathcal F$. Here we assume that $U$ is identified with the punctured disc centered at the origin
	in the 2-dimensional Euclidean space and $ds$ represents the angular coordinate.
	The group $\mathcal G_0(\Sigma,\mathcal F,p)$ clearly acts on 
	$\mathcal A_{\rm ex}(\Sigma,F,p)$ and the quotient space is denoted by ${\widehat R}(\Sigma,\mathcal F,p)$, and is called
	the {\it extended moduli space of flat connections on $\mathcal F$}.
\end{definition}		

The action of the based gauge group fixes the $\fg$-valued component of the elements of $\mathcal A_{\rm ex}(\Sigma,F,p)$. Therefore, we can define a map:
\begin{equation}\label{form2424}
\mu :
{\widehat R}(\Sigma,\mathcal F,p)  \to \frak g
\end{equation}
by: 
\[
  \mu([A,\xi]) = \xi.
\]
There is also an action of $G$ on the space ${\widehat R}(\Sigma,\mathcal F,p)$. Given any element $g\in \widetilde G$, we can construct an element of $\mathcal G(\Sigma,\mathcal F,p)$ which is given by $g$ in a neighborhood of $p$ that is contained in $U$ and is extended arbitrarily to the rest of $\Sigma$. This element of the gauge group acts on ${\widehat R}(\Sigma,\mathcal F,p)$ and this action only depends on the image of $g$ in the adjoint group $G$.

\begin{prop}[\cite{Je,Hu}]\label{symp-ext}
	There exist a $G$-invariant neighborhood $\fU$ of $0$ in $\fg$ and a symplectic structure on 
	$\mu^{-1}(\fU)\subset {\widehat R}(\Sigma,\mathcal F,p)$ 
	such that the action of $G$ is Hamiltonian and $\mu$ is the moment map.
	Furthermore, we have $R(\Sigma,\mathcal F)= \mu^{-1}(0)/G$, i.e., 
	$R(\Sigma,\mathcal F)$ can be identified with the symplectic quotient ${\widehat R}(\Sigma,\mathcal F,p)/\!\!/G$.
\end{prop}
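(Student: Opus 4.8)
The plan is to establish this as an instance of the standard theory of the Marsden--Weinstein symplectic quotient applied to the extended moduli space, with a careful analysis of the boundary behavior near the puncture $p$. I would first recall the Atiyah--Bott symplectic form: on the affine space of connections $\mathcal A(\Sigma,\mathcal F)$ there is a natural $2$-form $\omega(a,b) = \int_\Sigma -B(a\wedge b)$, and this descends to the moduli of flat connections. For the \emph{extended} moduli space, the extra $\mathfrak g$-factor should be viewed as a cotangent-type direction, and the pair $(A,\xi)$ with the boundary condition $A|_V = \xi\,ds$ is exactly what makes the form $\omega([A,\xi]) = \int_\Sigma -B(a\wedge b) + (\text{boundary terms in } \xi)$ both well-defined and nondegenerate after the based-gauge reduction. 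The first serious step is therefore: (i) write down $\omega$ explicitly on $\mathcal A_{\rm ex}(\Sigma,\mathcal F,p)$, and (ii) verify it is closed and that its kernel is exactly the orbit directions of $\mathcal G_0(\Sigma,\mathcal F,p)$ — so that it descends to a symplectic form on $\widehat R(\Sigma,\mathcal F,p)$, at least on the locus where the based-gauge action is free and the flat connections are suitably regular.

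Second, I would identify the $G$-action and compute its infinitesimal generators. As explained just before the statement, $G$ acts by extending constant gauge transformations near $p$; the generating vector field $X_\zeta$ for $\zeta \in \mathfrak g$ infinitesimally changes $\xi$ and twists $A$ near $p$. The key computation is to check Hamilton's equation $\iota_{X_\zeta}\omega = d\langle \mu, \zeta\rangle$ with $\mu([A,\xi]) = \xi$ and the identification $\mathfrak g \cong \mathfrak g^*$ via $-B$. This is essentially an integration-by-parts argument: the variation of $\int_\Sigma -B(a\wedge b)$ under the gauge twist localizes to a boundary integral around $p$, which is precisely $-B(d\xi, \zeta)$ up to sign conventions, matching $d\langle\mu,\zeta\rangle$. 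That $\mu$ is $G$-equivariant (i.e. $\Ad$-equivariant, hence coadjoint-equivariant) is immediate since $g\cdot(A,\xi)$ has $\mathfrak g$-component $\ad_g \xi$. This gives the Hamiltonian / moment-map assertion on the open set where everything is smooth.

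Third, for the shrinking to a $G$-invariant neighborhood $\fU$ of $0$: nondegeneracy of $\omega$ and freeness of the based-gauge action can genuinely fail at connections with large holonomy or with extra stabilizer, but near $\xi = 0$ (small $\xi$ forces small holonomy near $p$, and by flatness controls the connection) one stays in the locus where Jeffrey's / Huebschmann's construction produces an honest smooth symplectic manifold. So I would invoke \cite{Je,Hu} for the existence of such a $\fU$ and the smoothness of $\mu^{-1}(\fU)$, and restrict all the above verifications to that neighborhood; taking $\fU$ $G$-invariant is automatic since $\mu$ is $\Ad$-equivariant and we can replace any neighborhood of $0$ by the intersection of its $G$-translates (or a ball in an invariant norm). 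Finally, the identification $R(\Sigma,\mathcal F) = \mu^{-1}(0)/G = \widehat R(\Sigma,\mathcal F,p)/\!\!/ G$ follows by unwinding definitions: $\mu^{-1}(0)$ consists of classes $[A,0]$, i.e. flat connections on $\Sigma\setminus\{p\}$ that are trivial (equal to $0\cdot ds$) near $p$, hence extend flatly across $p$; modding by the residual $G$-action glues the based-gauge quotient back up to the full gauge quotient, which is exactly $R(\Sigma,\mathcal F)$.

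The main obstacle I anticipate is not any single computation but the bookkeeping at the puncture: making the boundary term in the symplectic form and in the moment-map identity rigorous requires pinning down precise Sobolev completions, the decay/matching of $A$ with $\xi\,ds$ on the neighborhood $V$, and the behavior of the based-gauge group there. In other words, the analytic setup that guarantees $\omega$ descends, is nondegenerate, and satisfies $\iota_{X_\zeta}\omega = d\langle\mu,\zeta\rangle$ \emph{exactly} (with the boundary contributions accounted for and not dropped) is the delicate part; once the correct function spaces and the normal form near $p$ are fixed, the closedness of $\omega$, equivariance of $\mu$, and the symplectic-quotient identification are formal. Since \cite{Je,Hu} already carry out this analysis, the proof is really a matter of citing and assembling their results in the form stated here, so I would keep the exposition at the level of a sketch and refer the reader there for the function-analytic details.
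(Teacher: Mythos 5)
The paper does not give a proof of this proposition at all: it is stated as a cited result from Jeffrey and Huebschmann, so there is nothing in the paper's own argument to compare against. Your sketch correctly reconstructs the standard line of reasoning those references carry out — the Atiyah--Bott $2$-form, the moment--map identity localizing to a boundary integral at the puncture, restriction to a $G$-invariant neighborhood of $0\in\fg$ where the construction is smooth, and the reduction $R(\Sigma,\mathcal F)=\mu^{-1}(0)/G$ — and appropriately defers the Sobolev/normal-form bookkeeping to \cite{Je,Hu}, which is exactly the level at which the paper treats the statement.
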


\begin{remark}
	The extended moduli space for a disconnected $\Sigma$ is defined to be the product of the extended moduli spaces associated
	to the connected components of $\Sigma$. To be more precise, we need to fix a base point $p_i$ for each connected component
	$\Sigma_i $ of $\Sigma$. These base points altogether are denoted by $p$. Then:
	\[
	{\widehat R}(\Sigma,\mathcal F,p):=\prod_{i}{\widehat R}(\Sigma_i,\mathcal F_i,p_i)
	\]
	where $\mathcal F_i$ is the restriction of $\mathcal F$ to $\Sigma_i$. Note that there is an action of $G^m$ on the 
	moduli space in this case where $m$ is the number of connected components.
\end{remark}

For the rest of this section, we assume that $\widetilde G=\SU(2)$, and $\mathcal F$ is the trivial ${\rm PU}(2)$-bundle. In this case, the corresponding extended moduli space will be denoted by  ${\widehat R}(\Sigma,p)$. Consider the following subspace of $\su(2)$, the Lie algebra of $\SU(2)$:
\begin{equation}\label{form44}
	\Delta(\theta_0)	 =
	\left\{ \xi \in \su(2)\mid \xi \in \mathcal O_\theta
	,\,\, \theta \in [0,\theta_0)
	\right\}
\end{equation}
 Here $\mathcal O_\theta$ is the adjoint orbit of the following element of $\su(2)$:
\[
  \left(
	\begin{matrix}
		2\pi \bi \theta  &0 \\
		0& -2\pi \bi \theta
	\end{matrix}
	\right)
\]
In Proposition \ref{symp-ext} and for the case that $\mathcal F$ is the trivial ${\rm PU}(2)$-bundle, we can assume that $\fU=\Delta(\theta_0)$ where $\theta_0 \leq \frac{1}{2}$ \cite{MW}.	

Let $Q:\su(2)\to \R^{\geq 0}$ be the map which assigns to $\xi$ in \eqref{form44} the value $\theta$. Define the function:
\[
  \overline{\mu}:=Q \circ \mu: {\widehat R}(\Sigma,p)\to [0,\infty).
\]
This function is smooth on the complement of $\mu^{-1}(0)$, and defines a Hamiltonian vector field for an $S^1$-action on $\mu^{-1}(\Delta(\frac{1}{2}))$. This $S^1$-action extends to $\mu^{-1}(\frac{1}{2})$. Consider the map:
\[
  \widetilde \mu: {\widehat R}(\Sigma,p) \times \C \to \R
\]
defined by $\widetilde \mu(x,z)=\overline \mu(x)+\frac{1}{2}|z|^2$. This map is also a moment map for a Hamiltonian $S^1$-action. For each $0<\theta\leq \frac{1}{2}$, we define:
\begin{equation} \label{ex}
  {\widehat R}(\Sigma,p,\theta):=\widetilde \mu^{-1}(\theta)/S^1
\end{equation}
This space is a smooth manifold and is called the {\it cut down extended moduli space}. The manifold ${\widehat R}(\Sigma,p,\theta)$ can be decomposed as the disjoint union of the following spaces:
\begin{equation} \label{decom-ex}
  \mu^{-1}(\Delta(\theta)) \hspace{1cm}\overline{\mu}^{-1}(\theta)/S^1
\end{equation}
This construction is an example of non-abelian symplectic cut \cite{Wo20}.

\begin{theorem}\label{thm3434}{\rm(\cite{MW})}
	For $0<\theta< \frac{1}{2}$, the closed manifold ${\widehat R}(\Sigma,p,\theta)$ admits a symplectic form. The codimension two
	submanifold ${(\overline \mu)}^{-1}(\theta)/S^1$ of ${\widehat R}(\Sigma,p,\theta)$ is a symplectic hypersurface and 
	${\widehat R}(\Sigma,p,\theta)$ is K\"ahler in a neighborhood of ${(\overline \mu)}^{-1}(\theta)/S^1$.
	The ${\rm PU}(2)$-action on $ \mu^{-1}(\Delta(\theta))\subset {\widehat R}(\Sigma,p,\theta)$ extends to 
	a Hamiltonian action on ${\widehat R}(\Sigma,p,\theta)$ whose moment map $\widehat \mu$ 
	is the smooth extension of the map $\mu$
	defined on $\mu^{-1}(\Delta(\theta))$. The symplectic quotient ${\widehat R}(\Sigma,p,\theta)/\!\!/{\rm PU}(2)$
	can be identified with the moduli space of flat $\SU(2)$-connections ${R}(\Sigma)$ on $\Sigma$.
\end{theorem}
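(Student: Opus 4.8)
The plan is to obtain Theorem \ref{thm3434} as a concrete special case of the general theory of non-abelian symplectic cuts, applied to the Hamiltonian $S^1$-action on ${\widehat R}(\Sigma,p)\times\C$ generated by $\widetilde\mu(x,z)=\overline\mu(x)+\tfrac12|z|^2$, and then to identify the resulting symplectic quotient. First I would record the local structure of $\overline\mu$: on $\mu^{-1}(\Delta(\tfrac12))\setminus\mu^{-1}(0)$ the function $\overline\mu=Q\circ\mu$ is smooth, its Hamiltonian vector field integrates to a free $S^1$-action on each level set $\overline\mu^{-1}(\theta)$ with $0<\theta<\tfrac12$ (freeness because the coadjoint orbit $\mathcal O_\theta\subset\su(2)$ is a genuine $2$-sphere away from the vertex and the $G$-action on $\mu^{-1}(\mathcal O_\theta)$ has stabilizer exactly a circle), and one checks the $S^1$-action on $\C$ by rotation has the only other fixed locus $\{z=0\}$, which is transverse in the obvious sense. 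Thus $\theta$ is a regular value of $\widetilde\mu$ for every $0<\theta<\tfrac12$ and $S^1$ acts freely on $\widetilde\mu^{-1}(\theta)$, so the Marsden--Weinstein quotient ${\widehat R}(\Sigma,p,\theta)=\widetilde\mu^{-1}(\theta)/S^1$ is a smooth symplectic manifold. Closedness follows because $\widetilde\mu$ is proper on $\mu^{-1}(\Delta(\tfrac12))\times\C$ once $\fU=\Delta(\theta_0)$ is taken with $\theta_0\le\tfrac12$ as in \cite{MW}, and $\overline\mu^{-1}(\theta)$ is compact.

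Next I would analyze the decomposition \eqref{decom-ex}. The locus $\{z\ne0\}$ in $\widetilde\mu^{-1}(\theta)$ is an $S^1$-bundle over $\mu^{-1}(\Delta(\theta))$ on which $S^1$ acts freely by rotating $z$, so after the quotient it contributes the open dense piece $\mu^{-1}(\Delta(\theta))$, with its symplectic form the restriction of the one on ${\widehat R}(\Sigma,p)$ — this is the standard fact that symplectic cut leaves the ``large'' side untouched. The locus $\{z=0\}$ gives $\overline\mu^{-1}(\theta)/S^1$, which I would show is a symplectic hypersurface: it is the zero locus of the section of the associated complex line bundle built from the $\C$-factor, cut out transversally, hence codimension two, and symplectic because the cut model is locally $\C$-fibered. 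For the K\"ahler statement near $\overline\mu^{-1}(\theta)/S^1$, I would use that in a tubular neighborhood the symplectic cut has the standard local model $(\text{disc bundle})\times_{S^1}\C$ with the flat K\"ahler structure on the $\C$-direction and the symplectic structure of $\overline\mu^{-1}(\theta)/S^1$ on the base; a compatible almost complex structure can be chosen integrable there because $\overline\mu^{-1}(\theta)/S^1$ is itself a symplectic reduction of a level set of the smooth proper function $\overline\mu$ and the normal bundle is a genuine complex line bundle.

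For the ${\rm PU}(2)$-part: the $G={\rm PU}(2)$-action on ${\widehat R}(\Sigma,p)$ commutes with the $S^1$-action generated by $\widetilde\mu$ (since $\overline\mu=Q\circ\mu$ is $G$-invariant and $G$ ignores the $\C$-factor), so it descends to ${\widehat R}(\Sigma,p,\theta)$; away from the cut locus this is just the restriction of the old action on $\mu^{-1}(\Delta(\theta))$ with moment map $\mu$, and I would check that $\mu$ extends smoothly across $\overline\mu^{-1}(\theta)/S^1$ — this is essentially the statement that $\mu$ is bounded with all derivatives controlled as one approaches the boundary coadjoint sphere, giving a smooth extension $\widehat\mu$ which is automatically the moment map for the descended Hamiltonian action by continuity of the moment map equation. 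Finally, for the identification ${\widehat R}(\Sigma,p,\theta)/\!\!/{\rm PU}(2)\cong R(\Sigma)$, I would invoke commutation of symplectic reduction: cutting by $\widetilde\mu$ and then reducing by ${\rm PU}(2)$ is the same as first reducing ${\widehat R}(\Sigma,p)$ by ${\rm PU}(2)$ — which by Proposition \ref{symp-ext} yields $\mu^{-1}(0)/G=R(\Sigma)$ — and then performing an $S^1$-cut on a space that has become a point in the cut direction (because the ${\rm PU}(2)$-moment-map value $0$ corresponds to the vertex $\theta=0$, so the $\C$-factor is forced to $|z|^2=2\theta$ with $S^1$ acting freely and transitively on the fiber), so the cut does nothing and the quotient is $R(\Sigma)$ unchanged.

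I expect the main obstacle to be the smoothness claims at the cut locus: verifying that $\overline\mu$ (hence $\widetilde\mu$) is smooth with $S^1$ acting freely on its relevant level sets despite $\overline\mu$ being only continuous across $\mu^{-1}(0)$, and that $\mu$ extends smoothly to $\widehat\mu$ across $\overline\mu^{-1}(\theta)/S^1$. These require a careful local model of ${\widehat R}(\Sigma,p)$ near $\mu^{-1}(\mathcal O_\theta)$ as a coadjoint-orbit bundle, together with the ${\rm PU}(2)$-equivariant normal form for the coadjoint sphere $\mathcal O_\theta$ as $\theta\to 0$ — precisely the analysis carried out in \cite{MW}, which I would cite rather than reproduce. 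The rest (symplectic cut formalism, reduction-in-stages, the description \eqref{decom-ex}) is then formal.
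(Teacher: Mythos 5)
Your overall framing — symplectic cut via the Hamiltonian $S^1$-action generated by $\widetilde\mu$ on ${\widehat R}(\Sigma,p)\times\C$, decomposition into the open dense piece $\mu^{-1}(\Delta(\theta))$ and the hypersurface $\overline\mu^{-1}(\theta)/S^1$, the descended Hamiltonian ${\rm PU}(2)$-action, and reduction-in-stages for ${\widehat R}(\Sigma,p,\theta)/\!\!/{\rm PU}(2)\cong R(\Sigma)$ — is essentially the paper's argument, and most of it is fine modulo citing \cite{MW} for the delicate freeness/regularity checks, as you yourself flag.

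However, the K\"ahler statement is where you have a genuine gap. You argue that a compatible almost complex structure in a neighborhood of $\overline\mu^{-1}(\theta)/S^1$ ``can be chosen integrable there because $\overline\mu^{-1}(\theta)/S^1$ is itself a symplectic reduction of a level set of the smooth proper function $\overline\mu$ and the normal bundle is a genuine complex line bundle.'' Neither premise gives you integrability. Symplectic reductions of non-K\"ahler spaces are not K\"ahler in general, and the normal bundle to any codimension-two symplectic submanifold is a complex line bundle — that observation carries no content. The symplectic neighborhood theorem tells you that a neighborhood of the hypersurface is determined by the symplectic type of the hypersurface and the Euler class of its normal bundle; to get a local K\"ahler structure, you must first establish that the hypersurface \emph{itself} is K\"ahler, and this is not formal. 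The paper's proof (following \cite[Proposition 4.6]{MW}) identifies $\overline\mu^{-1}(\theta)/S^1$ symplectically with $P(\Sigma,p,\theta)\times\mathcal O_\theta$, where $P(\Sigma,p,\theta)=\mu^{-1}(\mathcal O_\theta)/{\rm PU}(2)$ is a moduli space of parabolic bundles — a K\"ahler manifold by Mehta–Seshadri/Narasimhan–Seshadri-type theory — and $\mathcal O_\theta$ is the K\"ahler $2$-sphere with form $2\theta\cdot\omega_0$. That identification is a geometric input specific to the extended moduli space, not a consequence of the general symplectic-cut formalism, and your proposal does not supply a substitute for it. Without it, the K\"ahler-near-the-divisor claim — which is precisely what makes the $(X,D)$ from \eqref{symp-div} eligible for the divisor-complement Floer theory of Section \ref{sec:divisor complement} — is not established.
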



\begin{proof}
	Let $\C$ be given the negative of the standard symplectic form and consider the induced product symplectic form on 
	${\widehat R}(\Sigma,p) \times \C$.
	The ${\rm PU(2)}$-action on ${\widehat R}(\Sigma,p)$ induces a Hamiltonian action on ${\widehat R}(\Sigma,p) \times \C$.
	The map $\widetilde \mu$ is also the moment map for an $S^1$-action on the complement of $\mu^{-1}(0)\times \C$ in 
	${\widehat R}(\Sigma,p) \times \C$ which commutes with the ${\rm PU}(2)$-action. 
	The manifold ${\widehat R}(\Sigma,p,\theta)$ is the symplectic quotient of ${\widehat R}(\Sigma,p) \times \C$ 
	with respect to the $S^1$-action. Therefore, ${\widehat R}(\Sigma,p,\theta)$ admits a natural symplectic structure and a 
	Hamiltonian ${\rm PU(2)}$-action whose moment map is denoted by $\widehat \mu$. 
	It is straightforward to check that this action is an extension of the ${\rm PU}(2)$-action on $\mu^{-1}(\Delta(\theta))$, 
	and the symplectic quotient $\widehat \mu^{-1}(0)/{\rm PU}(2)$ is symplectomorphic to ${R}(\Sigma)$.

	It is shown in \cite[Proposition 4.6]{MW} that the hypersurface $\overline{\mu}^{-1}(\theta)/S^1$ is symplectomorphic to:
	\begin{equation*}
		P(\Sigma,p,\theta) \times \mathcal O_{\theta}
	\end{equation*}
	where $P(\Sigma,p,\theta)$ is equal to the following symplectic quotient:
	\begin{equation*}
		P(\Sigma,p,\theta):=\mu^{-1}(\mathcal O_\theta)/{\rm PU}(2).
	\end{equation*}
	The adjoint orbit $ \mathcal O_{\theta}$ is a 2-sphere equipped with the symplectic form $2\theta\cdot  \omega_0$ with $\omega_0$ being
	the standard volume form on a 2-sphere. The symplectic manifold $P(\Sigma,p,\theta)$ can be identified with a moduli space
	of parabolic bundles, and hence it is K\"ahler. 
	Now the claim about the symplectic form in a neighborhood of $\overline{\mu}^{-1}(\theta)/S^1$
	is a consequence of the standard neighborhood theorems in symplectic geometry.	
\end{proof}

\begin{remark}
	Theorem \ref{thm3434} to some degree can be extended to the case that $\theta=\frac{1}{2}$. The smooth manifold 
	${\widehat R}(\Sigma,p,\frac{1}{2})$ admits a closed 2-form which is non-degenerate on $\mu^{-1}(\Delta(\frac{1}{2}))$.
	The ${\rm PU}(2)$-action also extends to this manifold and the map $\mu$ on $\mu^{-1}(\Delta(\frac{1}{2}))$ determines 
	an $\su(2)$-valued map on ${\widehat R}(\Sigma,p,\frac{1}{2})$ which is the `moment' map for the ${\rm PU}(2)$-action.
	Since the symplectic form of ${\widehat R}(\Sigma,p,\frac{1}{2})$ is degenerate, the moment map condition should be interpreted
	carefully, and we refer the reader to \cite{MW} for more details. 
\end{remark}

A handlebody $H_g$ of genus $g$ determines a Lagrangian submanifold of ${\widehat R}(\Sigma,p,\theta)$. Assume that the boundary of $H_g$ is identified with $\Sigma_g$ and $\mathcal U$ is an open neighborhood of $p$ in $H_g$ whose intersection with the boundary is equal to $U$. Let $\mathcal A_{\rm fl}(H_g,p)$ be the space of all flat connections on the trivial $\SU(2)$-bundle over $H_g$ which are equal to the trivial connection in a neighborhood of $p$ contained in $\mathcal U$. Then the analogue of the based gauge group for the trivial $\SU(2)$-bundle on $H_g$ acts on $\mathcal A_{\rm fl}(H_g,p)$, and the quotient space is denoted by ${\tilde R(H_g,p)}$. 

\begin{prop}[\cite{MW}] \label{handle-body-lag}
	The restriction to the boundary induces a smooth embedding of ${\tilde R(H_g,p)}$ into ${\widehat R}(\Sigma,p,\theta)$.
	This submanifold of ${\widehat R}(\Sigma,p,\theta)$ is Lagrangian and invariant with respect to the action of ${\rm PU}(2)$.
\end{prop}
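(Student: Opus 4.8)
The plan is to build the embedding at the level of connections and then check the three assertions in turn. A flat connection $A$ on the trivial $\SU(2)$-bundle over $H_g$ that is trivial near $p$ restricts to a flat connection on $\Sigma\setminus\{p\}$ which is again trivial near $p$, so $(A|_\Sigma,0)$ lies in $\mathcal A_{\rm ex}(\Sigma,p)$ with $\mu$-value $0$. Restriction of connections intertwines the based gauge group of $H_g$ with that of $\Sigma$, so this descends to a map $\iota\colon\tilde R(H_g,p)\to\widehat R(\Sigma,p)$ whose image lies in the locus $\{\mu=0\}$; since $0\in\Delta(\theta)$, this locus sits inside $\mu^{-1}(\Delta(\theta))$, which by \eqref{decom-ex} is the open part of the symplectic cut $\widehat R(\Sigma,p,\theta)$ away from the cut locus $\overline\mu^{-1}(\theta)/S^1$, and there the cut is symplectomorphic to $\mu^{-1}(\Delta(\theta))\subset\widehat R(\Sigma,p)$. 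We thus regard $\iota$ as a manifestly smooth map $\tilde R(H_g,p)\to\widehat R(\Sigma,p,\theta)$ landing in this chart. To see it is an embedding: it is injective because a flat connection on $H_g$ trivial near $p$ is determined up to based gauge by its holonomy in $\Hom(\pi_1(H_g,p),\SU(2))$ and the inclusion induces a surjection $\pi_1(\Sigma,p)\twoheadrightarrow\pi_1(H_g,p)$, so this holonomy is recovered from that of $A|_\Sigma$ (in fact $\tilde R(H_g,p)\cong\SU(2)^g$). It is an immersion because at $[A]$ the differential of $\iota$ is the restriction map $H^1_A(H_g;\mathfrak g)\to H^1_A(\Sigma;\mathfrak g)$ in $d_A$-twisted de Rham cohomology (along $\tilde R(H_g,p)$ the $\mathfrak g$-component of the moment map is frozen at $0$, so the target is the ordinary $H^1_A(\Sigma;\mathfrak g)$ inside the tangent space of the extended moduli space); by the long exact sequence of the pair $(H_g,\Sigma)$ its kernel is the image of $H^1(H_g,\Sigma;\mathfrak g)\to H^1(H_g;\mathfrak g)$, and Poincar\'e--Lefschetz duality gives $H^1(H_g,\Sigma;\mathfrak g)\cong H_2(H_g;\mathfrak g)=0$ since $H_g$ is homotopy equivalent to a wedge of circles. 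As $\SU(2)^g$ is compact, the injective immersion $\iota$ is a closed embedding.

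Next, the image is Lagrangian. The dimension count $\dim\tilde R(H_g,p)=3g=\tfrac{1}{2}\dim\widehat R(\Sigma,p,\theta)$ reduces the claim to isotropy, and by the symplectomorphism above it suffices to show $\iota(\tilde R(H_g,p))$ is isotropic for the $2$-form $\omega_{\widehat R}$ of $\widehat R(\Sigma,p)$. On tangent vectors with vanishing $\mathfrak g$-component that vanish near $p$ — which, by construction of $\iota$, is the situation for vectors $a_1,a_2\in\Omega^1(\Sigma;\mathfrak g)$ tangent to $\tilde R(H_g,p)$, and these moreover extend to $d_A$-closed $\mathfrak g$-valued $1$-forms on $H_g$ vanishing near $p$ — the form $\omega_{\widehat R}$ is given by Goldman's integral, so Stokes' theorem on $H_g$ (using flatness of $A$) yields $\int_\Sigma\langle a_1\wedge a_2\rangle=\int_{H_g}d\langle a_1\wedge a_2\rangle=\int_{H_g}\big(\langle d_A a_1\wedge a_2\rangle-\langle a_1\wedge d_A a_2\rangle\big)=0$. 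Hence $\iota(\tilde R(H_g,p))$ is isotropic, hence Lagrangian.

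Finally, ${\rm PU}(2)$-invariance. By Theorem \ref{thm3434} the ${\rm PU}(2)$-action on $\widehat R(\Sigma,p,\theta)$ restricts on $\mu^{-1}(\Delta(\theta))$ to the action inherited from $\widehat R(\Sigma,p)$. Given $g\in{\rm PU}(2)$ and $[A]\in\tilde R(H_g,p)$, choose a lift of $g$ to $\widetilde G$ and a gauge transformation $u$ of the trivial bundle over $H_g$ equal to this lift near $p$ and extended arbitrarily over $H_g$ (possible since $H_g$ retracts onto a $1$-complex and $\widetilde G$ is connected); then $u\cdot A$ is flat and, $u$ being locally constant near $p$, still trivial there, so $[u\cdot A]\in\tilde R(H_g,p)$. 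Restricting to $\Sigma$ shows $\iota([u\cdot A])=g\cdot\iota([A])$, so the image of $\iota$ is ${\rm PU}(2)$-invariant.

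I expect the main obstacle to be the bookkeeping in the Lagrangian step: carefully matching the restriction of $\omega_\theta$ on the chart $\mu^{-1}(\Delta(\theta))$ of the symplectic cut with the extended-moduli form $\omega_{\widehat R}$, and identifying the latter, on $\{\mu=0\}$ and on vectors vanishing near $p$, with Goldman's integral (tracking the base-point and $\mathfrak g$-valued correction terms built into the definition of $\widehat R(\Sigma,p)$), together with justifying the twisted Stokes computation given that $A$ is flat only on $H_g\setminus\{p\}$ and merely trivial near $p$ — although it is precisely the triviality near $p$ that forces every boundary contribution to vanish. The remaining analytic input (Sobolev completions and slice theorems guaranteeing that these moduli spaces are smooth) is standard and follows \cite{MW}.
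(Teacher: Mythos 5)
The paper cites this proposition from \cite{MW} without giving its own proof, so I am evaluating your argument directly; it is correct in outline and matches the standard route (restrict flat connections to the boundary, inject via the $\pi_1$-surjection, immerse via a duality argument, isotropy via Stokes on the handlebody, equivariance by extending a gauge transformation). The Stokes computation and the $\PU(2)$-invariance argument are clean and correct.

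The one place you should tighten is the immersion step. You identify the differential of $\iota$ with the restriction map $H^1_A(H_g;\fg)\to H^1_A(\Sigma;\fg)$, but those absolute twisted cohomology groups are the tangent spaces of the \emph{unbased} moduli spaces $R(H_g)$ and $R(\Sigma)$; they have dimensions $3(g-1)$ and $6g-6$ at a generic irreducible, not the $3g$ and $6g$ you need for $\tilde R(H_g,p)\cong\SU(2)^g$ and the extended moduli space. Because the quotient is by the \emph{based} gauge group, the honest tangent spaces are cocycle spaces $Z^1(\pi_1(H_g,p);\fg_A)\to Z^1(\pi_1(\Sigma,p);\fg_A)$ (equivalently, cohomology relative to the base point), and then injectivity is immediate from surjectivity of $\pi_1(\Sigma,p)\twoheadrightarrow\pi_1(H_g,p)$, with no appeal to Lefschetz duality needed. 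Alternatively, keep your duality argument for the unbased restriction $H^1_A(H_g)\hookrightarrow H^1_A(\Sigma)$ and then note that $\tilde R(H_g,p)\to R(H_g)$ and $\widehat R(\Sigma,p)\to R(\Sigma)$ are $\PU(2)$-torsors near irreducibles and $\iota$ is equivariant, so injectivity on the quotient plus injectivity on each fiber gives the immersion. Either way the conclusion stands; just be careful not to state that $T_{[A]}\tilde R(H_g,p)=H^1_A(H_g;\fg)$, since the dimensions disagree.
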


In \cite{MW}, the notion of monotone Lagrangians are extended to appropriate families of manifolds with degenerate symplectic forms\footnote{For the definition of monotonicity of Lagrangians in non-degenerate symplectic manifolds see Definition \ref{monotone}.}. In particular, it is shown there that ${\tilde R(H_g,p)}$ gives rise to a monotone Lagrangian submanifold of ${\widehat R}(\Sigma,p,\frac{1}{2})$. Thus, given a Heegaard splitting $H^0_g \cup_{\Sigma} H^1_g$ of a 3-manifold $M$, we can construct two monotone Lagrangian submanifolds $L_0$ and $L_1$ of ${\widehat R}(\Sigma,p,\frac{1}{2})$. The degeneracies of the symplectic form on ${\widehat R}(\Sigma,p,\frac{1}{2})$ is also studied carefully in \cite{MW} and subsequently the Lagrangian Floer homology of $L_0$ and $L_1$ is defined. This Lagrangian Floer homology group is an invariant of the 3-manifold $M$. One might hope to use this 3-manifold invariant in the formulation of the Atiyah-Floer conjecture. However, this invariant is not isomorphic to instanton Floer homology even in the simplest case that $M$ is equal to $S^3$. The right candidate for the the Atiyah-Floer conjecture should incorporate the action of ${\rm PU}(2)$ and should be defined as an appropriate ${\rm PU}(2)$-equivariant Lagrangian Floer homology of the Lagrangians $L_0$ and $L_1$.

As an alternative approach, one can try to define Lagrangian Floer homology for the Lagrangians associated to ${\tilde R(H^0_g,p)}$ and ${\tilde R(H^1_g,p)}$ in the non-degenerate symplectic manifold ${\widehat R}(\Sigma,p,\theta)$ where $\theta<\frac{1}{2}$. Unfortunately, these Lagrangians are not monotone anymore and one cannot use the general construction of \cite{Oh} to define Lagrangian Floer homology. It is not even clear to the authors whether these Lagrangians are unobstructed in the sense of \cite{fooobook}. Nevertheless, the general construction of \cite{DF} shows that one can replace the monotonicity of ${\tilde R(H^i_g,p)}$ in ${\widehat R}(\Sigma,p,\theta)$ with a weaker condition. We shall review this construction in Section \ref{sec:divisor complement}. The advantage of this construction is that it avoids the detailed analysis of the degeneracy of the symplectic form on ${\widehat R}(\Sigma,p,\frac{1}{2})$. In particular, we believe this framework can be used to define Lagrangian Floer homology for other choices of the Lie group $\widetilde G$. It also seems that this construction can be combined more easily with equivariant Floer homology. We shall review the definition of two such equivariant theories in Section \ref{Sec:AFori}.

%


\section{Equivariant Lagrangian Floer Homology} \label{sec:equivariant}

Let $(X,\omega)$ be a symplectic manifold. For simplicity, we assume that $X$ is a spin manifold. For any Lagrangian submanifold $L$ of $X$, there is a homomorphism $\mu_L : H_2(X;L;\Z) \to \Z$ which is called the {\it Maslov index}. (See, for example, \cite[Definition 2.1.15]{fooobook}.) 

\begin{definition} \label{monotone}
	A Lagrangian submanifold $L$ is {\it monotone}, if there exists $c>0$ such that the following identity holds for 
	all $\beta \in \pi_2(X,L)$:
        \begin{equation}\label{ineq32}
        		c\cdot {\mu_L}(\beta) = \omega(\beta).
        \end{equation}
        	The {\it minimal Maslov number of $L$} is defined to be:
	\begin{equation*}
		\inf \{ \mu_L(\beta) \mid \beta \in \pi_2(X,L),\,\omega(\beta) >0 \}.
	\end{equation*}
\end{definition}


Following Floer's original definition \cite{Fl:LHF}, Oh constructed Lagrangian Floer homology for a pair $L_0$ and $L_1$ of monotone Lagrangians, if one of the following conditions holds \cite{Oh}:
\begin{enumerate}
	\item[(m.a)] The minimal Maslov numbers of $L_0$ and of $L_1$ are both strictly greater than 2.
	\item[(m.b)] The Lagrangian submanifold $L_1$  is Hamiltonian isotopic to $L_0$. 
\end{enumerate}

Lagrangian Floer homology can be enriched when there is a group action on the underlying symplectic manifold. Such constructions have been carried out in various ways in the literature. (See Remark \ref{works-on-equiv-HF}). Let a compact Lie group $G$ acts on $X$, preserving the symplectic structure $\omega$. We fix a $G$-equivariant almost complex structure $J$ which is compatible with $\omega$. Note that the space of all such almost complex structures is contractible because the set of all $G$-invariant Riemannian metrics is convex. In the following, $H_G^*(M)$ for a $G$-space $M$ denotes the $G$-equivariant cohomology of $M$ with coefficients in $\R$. In the case that, $M$ is just a point, this group is denoted by $H_G^*$. The group $H_G^*(M)$ has the structure of a module over $H_G^*$ \cite{Bo:equi-co}. 

\begin{theorem}\label{existequiv}
	Let $L_0$, $L_1$ be $G$-equivariant spin Lagrangian submanifolds of $X$. 
	Suppose they are both monotone and satisfy either {\rm (m.a)} or {\rm (m.b)}. 
	Then there is a $H_G^*$-module $HF_G(L_0,L_1)$, called $G$-equivariant Lagrangian Floer homology of 
	$L_0$ and $L_1$. 
	In the case that the intersection $L_0 \cap L_1$ is clean, there exists a spectral sequence whose $E_2$ page is 
	$H_G^*(L_0\cap L_1)$ and which converges to $HF_G(L_0,L_1)$.
\end{theorem}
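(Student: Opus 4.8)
The plan is to construct $HF_G(L_0,L_1)$ by combining the Borel construction for equivariant (co)homology with the standard monotone Lagrangian Floer package of \cite{Oh}, following the model of Morse-Bott equivariant theories as in \cite{Bo:equi-co} and the equivariant Floer constructions referenced in Remark \ref{works-on-equiv-HF}. Concretely, let $EG_N \to BG_N$ be a finite-dimensional approximation of the universal bundle, so that $EG_N$ is a closed manifold that is $N$-connected, and form the mixed spaces $X_N := (X \times EG_N)/G$, together with $L_{i,N} := (L_i \times EG_N)/G$ for $i=0,1$. Since $G$ acts freely on $EG_N$, these are smooth manifolds, $L_{i,N} \subset X_N$ is a Lagrangian submanifold with respect to a closed $2$-form $\omega_N$ that restricts to $\omega$ on the fibers $X$, and the $G$-equivariant $J$ induces a fiberwise almost complex structure. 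The first step is to check that the monotonicity of $L_i$ in $X$, together with the hypotheses (m.a) or (m.b), passes to $L_{i,N}$ in $X_N$ up to a controlled range: $\pi_2(X_N, L_{i,N})$ maps to $\pi_2(X,L_i)$ with kernel and cokernel controlled by $\pi_*(BG_N)$, which vanishes through degree $N$; thus for $N$ large the relevant disk classes of bounded area agree with those in $X$, the Maslov-area relation \eqref{ineq32} is inherited, and the minimal Maslov number is unchanged in the relevant range. This yields a well-defined Floer complex $CF(L_{0,N},L_{1,N})$ over $\R$ whose homology is a module over $H^*(BG_N;\R)$, and one then defines $HF_G(L_0,L_1) := \varprojlim_N HF(L_{0,N},L_{1,N})$, checking that the maps induced by $BG_N \hookrightarrow BG_{N+1}$ are compatible with the module structures and stabilize in each fixed degree, so the limit is a genuine $H_G^*$-module.

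The second step is the spectral sequence in the clean-intersection case. Here I would not pass to the Borel space directly but rather run the Morse-Bott–type argument: when $L_0 \cap L_1$ is clean, the pearl/Morse-Bott model for Floer homology (as in the monotone Morse-Bott Floer theory of Bott, and its equivariant refinement) gives a filtered complex whose associated graded is computed from the Morse theory of a $G$-invariant Morse function on $L_0 \cap L_1$ together with the equivariant parameter; filtering by the ``number of Floer trajectory components'' (equivalently, by symplectic area, using monotonicity to discretize it) produces a spectral sequence whose $E_1$ or $E_2$ page is the equivariant Morse complex of $L_0\cap L_1$, i.e. $H_G^*(L_0\cap L_1)$, converging to $HF_G(L_0,L_1)$. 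One must verify convergence: monotonicity bounds energies below by a positive constant times Maslov index, the filtration is bounded below and exhaustive, and the approximation in $N$ is uniform on each page, so the limit spectral sequence makes sense and converges.

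The main obstacle will be the transversality and gluing analysis needed to make the equivariant Floer differential well-defined and to square to zero, uniformly in the approximation index $N$. On $X_N$ the naive fiberwise $J$ is never regular for all moduli spaces simultaneously, and one must use $G$-invariant domain-dependent perturbations (or the virtual/Kuranishi technology of \cite{fooobook}) that are compatible across the maps $X_N \to X_{N+1}$; checking that the perturbed moduli spaces have the expected dimension, that bubbling is excluded exactly as in the monotone non-equivariant case under (m.a)/(m.b) (disk bubbles of Maslov $\le 0$ are excluded, Maslov $2$ bubbles contribute only to a potential term that cancels in pairs when the Lagrangians are spin and oriented), and that the boundary strata of the $1$-dimensional moduli spaces assemble into $\partial^2 = 0$ while respecting the $H^*(BG_N)$-module structure, is the technically heaviest point. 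A secondary subtlety is independence of all choices (the finite-dimensional approximation $EG_N$, the equivariant $J$, the perturbations), which follows from constructing equivariant continuation maps and chain homotopies by the usual parametrized argument, but again must be done compatibly with the inverse system in $N$; I expect this bookkeeping, rather than any new geometric idea, to be where the real work lies, and it is presumably what \cite{DF} carries out in detail.
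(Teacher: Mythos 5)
Your construction takes a genuinely different route from the paper's, and along the way it runs into a gap that needs to be addressed.

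The paper's approach, made explicit in Remark \ref{works-on-equiv-HF}, is to apply the Borel construction \emph{after} finite-dimensional reduction: all pseudo-holomorphic strips are taken with target the original symplectic manifold $X$, so that the compactified moduli spaces ${\mathcal M}(p,q;\beta;L_0,L_1)$ are finite-dimensional $G$-spaces (with $G$-equivariant Kuranishi structures, following \cite{FuFu5}), and only \emph{then} are the approximations $EG(N)$ brought in, by taking ${\mathcal M}(p,q;\beta;L_0,L_1)\times_G EG(N)$ and pushing/pulling differential forms along the evaluation maps to $G\cdot p\times_G EG(N)$ and $G\cdot q\times_G EG(N)$. At no point does the curve equation see $BG_N$; the passage to $N\to\infty$ is purely a limit of finite-dimensional Borel constructions on moduli spaces. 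Your proposal instead changes the curve target to $X_N=(X\times EG_N)/G$ with Lagrangians $L_{i,N}=(L_i\times EG_N)/G$, and tries to run ordinary monotone Floer theory there. That is closer in spirit to the ``Floer theory coupled with Morse theory on $EG$'' approach of Seidel--Smith and Hendricks--Lipshitz--Sarkar, which the paper explicitly lists as an alternative valid under more restrictive hypotheses.

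The gap in your version is that $X_N$ is not naturally a symplectic manifold. The only $2$-form that descends from $\omega$ on $X\times EG_N$ to $X_N$ is the fiberwise one, and it is degenerate along the base directions $BG_N$; to make $X_N$ symplectic you would need to pick a connection on $EG_N\to BG_N$ and a symplectic form on $BG_N$ and form a minimal-coupling form, and none of this is canonical. Your $L_{i,N}$ is then ``Lagrangian'' only in the fiber direction, and ``$HF(L_{0,N},L_{1,N})$'' is not defined by the monotone package of \cite{Oh} without first deciding what the holomorphic-strip equation in $X_N$ even means. Worked out carefully this becomes a family Floer theory over $BG_N$, which is a genuinely more involved construction than what is claimed and is precisely what the paper's choice of where to insert the Borel construction is designed to avoid. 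A secondary issue: the statement ``the kernel and cokernel of $\pi_2(X_N,L_{i,N})\to\pi_2(X,L_i)$ are controlled by $\pi_*(BG_N)$, which vanishes through degree $N$'' conflates the connectivity of $EG_N$ with that of $BG_N$; for a non-simply-connected $G$ such as ${\rm PU}(2)$ one has $\pi_2(BG)\ne 0$, so the comparison of disk classes requires care rather than being automatic. Finally, your attribution of the hard work to \cite{DF} is misplaced: that reference is about Floer theory in divisor complements (Theorem \ref{mainthm}); the equivariant construction in question goes back to \cite[Section 7]{fu-1} and is expanded in \cite{FuFu5}.
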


Recall that two submanifolds $L_0$ and $L_1$ of a smooth manifold $M$ have {\it clean intersection}, if $N=L_0\cap L_1$ is a smooth submanifold of $M$ and for any $x\in N$, we have $T_xN=T_xL_0\cap T_xL_1$.

\begin{proof}[Sketch of the proof]
	We assume that the intersection $L_0\cap L_1$ is a disjoint union of finitely  many $G$-orbits $G\cdot p$ for $p \in \mathcal A$. A {\it pseudo-holomorphic strip} $u : \R \times [0,1] \to X$ is a map that satisfies the following Cauchy-Riemann equation:
\begin{equation} \label{CR}
	\partial_t u+J\partial_\tau u=0
\end{equation}
We are interested in the moduli space of pseudo-holomorphic maps $u$ which satisfy the following boundary condition:
\begin{equation}\label{form38}
\aligned
u(\R \times \{0\}) &\subset L_0, \qquad
&u(\R \times \{1\}) \subset L_1 \\
\lim_{t\to+\infty} u(t,\tau) &\in G\cdot p, \qquad
&\lim_{t\to-\infty} u(t,\tau) \in G\cdot q.
\endaligned
\end{equation}
We will denote the homology classes of all such maps by $H(p,q)$. For a fixed $\beta \in H(p,q)$, let $\overset{\circ}{\mathcal M}(p, q;\beta;L_0,L_1)$ be the moduli space of pseudo-holomorphic maps satisfying \eqref{form38} and representing $\beta$, where we identify two maps $u$ and $u'$ if  $u'(t,\tau) = u(t+t_0,\tau)$ for some $t_0 \in \R$. Note that $\overset{\circ}{\mathcal M}( p, q;\beta;L_0,L_1)$ is invariant with respect to the action of the the group $G$. We also assume that this space is cut-down transversely by Equation \eqref{CR}. This moduli space can be compactified to a cornered manifold ${\mathcal M}(p, q;\beta;L_0,L_1)$ using {\it stable map compactification} \cite[Definition 10.3]{FO}. Codimension one boundary components of this space can be identified with the union of the fiber products:
\begin{equation}\label{form3838}
{\mathcal M}(p, r;\beta_1;L_0,L_1)
\times_{G \cdot r}{\mathcal M}(r, q;\beta_2;L_0,L_1)
\end{equation}
where $r \in \mathcal A$ and $\beta_1 \#\beta_2 = \beta$. Here $\#:H(p,r)\times H(r,q)\to H(p,q)$ is the concatenation of homology classes. Monotonicity and (m.a) or (m.b) are the main ingredients to prove these claims about stable map compactification.

The classifying space $BG$ and the universal bundle $EG$ over $BG$ can be approximated by finite dimensional manifolds $BG(N)$, $EG(N)$. To be more precise, suppose $EG(N)$ is a principal $G$-bundle over a manifold $BG(N)$ such that the homotopy groups of $EG(N)$ vanish up to degree $N$. We consider the approximate Borel construction
${\mathcal M}(p,q;\beta;L_0,L_1)
\times_G EG(N)$.
Taking asymptotic value as $t \to \pm \infty$, we obtain two evaluation maps as below:
\begin{equation*}\label{form39}
\begin{CD}
G\cdot p \times_G EG(N)
@<{{\rm ev}_{-\infty}}<<
{\mathcal M}(p, q;\beta;L_0,L_1)
\times_G EG(N)
@>{{\rm ev}_{+\infty}}>> G\cdot q \times_G EG(N).
\end{CD}
\end{equation*}
If ${\rm ev}_{+\infty}$ is a submersion, then we can define an operator:
\begin{equation}\label{form311009}
d_{p,q;\beta} : \Omega^*(G\cdot p \times_G EG(N))
\to \Omega^*(G\cdot q \times_G EG(N))
\end{equation}
between the space of differential forms by:
\begin{equation}\label{form31100}
d_{p,q;\beta}(h) = ({\rm ev}_{+\infty})_!({\rm ev}_{-\infty}^*h),
\end{equation}
where $({\rm ev}_{+\infty})_!$ is integration along the fiber.
Characterization of codimension one boundary components in \eqref{form3838} implies that:
\begin{equation}\label{form311}
d \circ d_{p,q;\beta} \pm d_{p,q;\beta} \circ d
= \sum_r\sum_{\beta_1+\beta_2=\beta} \pm
  d_{p,r;\beta_1} \circ d_{r,q;\beta_2}.
\end{equation}
Here $d$ is the usual de Rham differential. Therefore, the map $\delta_N= d + \sum d_{p,q;\beta}$ defines a differential, {\it i.e.}, it satisfies $\delta_N \circ \delta_N = 0$. Taking the limit $N \to \infty$, we obtain the 
equivariant Floer homology as the limit. 

In general, it might be the case that ${\mathcal M}(p,q;\beta;L_0,L_1)$ is not a smooth manifold or ${\rm ev}_{+\infty}$ is not a submersion. Then we can use the theory of {\it Kuranishi structures} and {\it continuous family of perturbations} on Kuranishi spaces to prove the same conclusion. In fact, following \cite{FuFu5}, we obtain a $G$-equivariant Kuranishi structure on ${\mathcal M}(p,q;\beta;L_0,L_1)$ and hence a Kuranishi structure on  ${\mathcal M}(p, q;\beta;L_0,L_1)\times_G EG(N)$. Then we can define a system of perturbations on these Kuranishi structures which give rise to a map as in \eqref{form311009} between the spaces of differential forms that satisfy \eqref{form311}.
\end{proof}

The elements of the moduli space ${\mathcal M}(p,q;\beta;L_0,L_1)$ can be regarded as solutions of a Fredholm equation which is defined on an infinite dimensional space and takes values in another infinite dimensional space. Roughly speaking, a Kuranishi structure on this moduli space replaces these infinite dimensional spaces with the spaces of finite dimensions. To be a bit more detailed, a Kuranishi structure is a covering of the moduli space with {\it Kuranishi charts}. For a point $p$ in the moduli space, a Kuranishi chart in a neighborhood of $p$ is a quadruple $(V,E,s,\psi)$ such that $V$ is a manifold, $E$ is a vector bundle, $s$ is a section of $E$ and $\psi$ is a homeomorphism from $s^{-1}(0)$ to an open neighborhood of $p$ in the moduli space. In general, we might need to work in the case that $V$ and $E$ are orbifold and orbi-bundle. Another part of the data of a Kuranishi structure is the set of coordinate change maps which explain how to glue different Kuranishi charts together. In order to get smooth spaces, we need to perturb the zero sets of the sections of Kuranishi charts in a consistent way, and continuous family of perturbations give a systematic way to achieve this goal. For a more detailed definition of Kuranishi structures and continuous family of perturbations, we refer the reader to \cite[Definition A1.5]{fooobook} and \cite{foootech2}.


\begin{remark} \label{works-on-equiv-HF}
	In this section, we discussed an approach to equivariant Lagrangian Floer homology which is 
	given by applying the Borel construction {\it after} taking finite dimensional reduction. 
	This approach was proposed independently by the second author in \cite[Section 7]{fu-1}
	and Viterbo. There are alternative approaches to equivariant Lagrangian Floer homology which also use 
	Borel construction but avoid virtual techniques. These approaches give rise to similar results as Theorem \ref{existequiv} 
	under more restrictive assumptions. In the case that $G=\Z/2\Z$, Floer homology coupled with Morse homology on 
	$EG$ is used in \cite{SeiSmi:loc} by Seidel and Smith to define
	equivariant Lagrangian Floer homology . 
	More recently, Hendricks, Lipshitz and Sarkar employed homotopy theoretic methods to define Lagrangian Floer homology 
	in the presence of the action of a Lie group \cite{HLS:equiv-finite,HLS:equiv-Lie}. 
	There are also various other equivariant theories for other Floer homologies (see, for example, 
	\cite{Don:YM-Floer,KM:monopoles-3-man,AB:equiv-I}).
\end{remark}

\section{Lagrangian Floer Theory in a Smooth Divisor Complement}\label{sec:divisor complement}


Let $(X,\omega)$ be a compact 
symplctic manifold and $D$ be a codimension $2$ submanimfold. We assume that $(X,D)$ is a K\"ahler manifold in a neighborhood of $D$, and $D$ is a smooth divisor in this neighborhood. 
\begin{definition}
	Let $L_1$ and $L_2$ be compact subsets of $X\setminus D$.
	We say $L_1$ is {\it Hamiltonian isotopic} to $L_2$ relative to $D$ 
	if there exists a compactly supported time dependent Hamiltonian 
	$H : (X\backslash D) \times [0,1] \to \R$ so that the Hamiltonian diffeomorphism $\varphi : X \setminus D \to X \setminus D$
	generated by $H$ sends $L_1$ to $L_2$, that is, $\varphi(L_1) = L_2$.
\end{definition}
\begin{definition}
	We say $L \subset X \setminus D$ is {\it monotone} if 
	\eqref{ineq32} holds for $\beta \in H_2(X\setminus D,L)$.
	The minimal Maslov number of $L$ relative to $D$ is defined as:
	\[
	  \inf \{\mu_L(\beta) \mid \beta \in \pi_2(X\setminus D,L),\,\omega(\beta) >0 \}.
	\]	
\end{definition}

In general, $\Lambda_0^{R}$, 
                    	the universal Novikov ring with ground ring $R$, consists of formal sums
                    	$\sum_i c_i T^{\lambda_i}$ where $c_i \in R$, $\lambda_i \in \R_{\ge 0}$, $\lim_{i\to\infty} \lambda_i = +\infty$, and 
                    	$T$ is a formal parameter. Similarly, 
	$\Lambda^{R}$ consists of $\sum_i c_i T^{\lambda_i}$ where $c_i \in R$, $\lambda_i \in \R$, $\lim_{i\to\infty} \lambda_i = +\infty$.
If $R$ is a field then $\Lambda^{R}$ is also a field.

\begin{theorem}\label{mainthm}
{\rm (\cite{DF})}
	Let $L_0,L_1$ be compact, monotone and spin Lagrangian submanifolds of $X\setminus D$.	
	We assume that {\rm (m.a)} or {\rm (m.b)} holds for these Lagrangains.
	Then there is a vector space $HF(L_0,L_1;X\setminus D)$ over $\Lambda^\Q$ which is called 
	the Lagrangian Floer homology of $L_0$ and $L_1$ relative to $D$, and satisfies the following properties:
	\vspace{-5pt}
	\begin{enumerate}
	\item[(i)]
		If $L_0$ is transversal to $L_1$ then we have
		$$
		{\rm rank}_{\Lambda^\Q} HF(L_0,L_1;X\setminus D) \le \# (L_0 \cap L_1).
		$$
	\item[(ii)]
		If $L_i'$ is Hamiltonian isotopic to $L_i$ in $X \setminus D$ 
		for $i=0,1$ then
		$$
		HF(L_0,L_1;X\setminus D) \cong HF(L'_0,L'_1;X\setminus D)
		$$
	\item[(iii)] 
		If either $L_1 = L_0$ or $\pi_1(L_0) = \pi_1(L_1) =0$, then 
		we can take $\Q$ as a coefficient ring instead of the 
		Novikov filed $\Lambda^\Q$.
	\item[(iv)]
		If $L_0 = L_1 =L$ holds, then there exists a 
		spectral sequence whose $E^2$ page is  $H(L;\Q)$
		and which converges to $HF(L,L;X\setminus D)$. 	\end{enumerate}
\end{theorem}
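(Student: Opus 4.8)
The plan is to construct $HF(L_0,L_1;X\setminus D)$ by running the standard Floer-theoretic machinery but with a carefully chosen almost complex structure that makes the divisor $D$ behave like an ``obstacle at infinity'' — concretely, one picks $J$ to be integrable and compatible with the K\"ahler structure in a neighborhood of $D$, so that by positivity of intersection any $J$-holomorphic disc or sphere meeting $D$ does so with positive, finite intersection multiplicity. The key geometric input is a \emph{monotonicity/energy bound near $D$}: for a neighborhood basis of $D$, a holomorphic curve entering a small tube around $D$ picks up a definite amount of energy proportional to its intersection number with $D$. This is the mechanism that replaces global monotonicity of the ambient symplectic form and prevents holomorphic strips with boundary on $L_0\cup L_1$ (which by hypothesis are monotone \emph{in the complement} $X\setminus D$) from escaping into $D$ in the limit. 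I would first set up this neighborhood geometry and the relative monotonicity inequality \eqref{ineq32} on $H_2(X\setminus D,L_i)$, then define the chain complex over the Novikov field $\Lambda^\Q$ with generators the intersection points $L_0\cap L_1$ (after a small Hamiltonian perturbation in $X\setminus D$ to achieve transversality, using (m.a) or (m.b)) and differential counting index-one strips in $X\setminus D$ weighted by $T^{\omega(\beta)}$.

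The core of the argument is the compactness and gluing analysis. The moduli spaces $\overset{\circ}{\mathcal M}(p,q;\beta;L_0,L_1)$ must be shown to compactify within $X\setminus D$: a sequence of strips either converges (after bubbling) to a broken configuration of strips and discs/spheres entirely in $X\setminus D$, or some component touches $D$. The second alternative is excluded in low index by the energy-near-$D$ estimate combined with monotonicity — a component hitting $D$ would have intersection multiplicity $\ge 1$ with $D$, hence Maslov index $\ge 2$ (using the adjunction-type relation between the Maslov index in $X$, the Maslov index in $X\setminus D$, and twice the intersection number with $D$), which forces it to carry enough index to violate the dimension count for codimension-one degenerations. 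Hypotheses (m.a) (minimal Maslov $>2$ relative to $D$) or (m.b) are exactly what is needed to kill disc bubbles of Maslov $0$ or $2$ and to pin down the codimension-one boundary of the one-dimensional moduli spaces as broken strips, giving $\partial\circ\partial=0$. Where transversality fails, one invokes Kuranishi structures and continuous families of perturbations, as in the proof of Theorem \ref{existequiv}, to realize the virtual counts; the perturbations must be chosen compatibly with the stratification by intersection number with $D$ so that the energy filtration by $T^{\omega(\beta)}$ is preserved.

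Properties (i)--(iv) then follow by now-standard arguments once the complex is in place. For (i), the rank bound is immediate since the generators are the transverse intersection points and $\Lambda^\Q$ is a field. For (ii), Hamiltonian invariance relative to $D$ is proved by the usual continuation-map argument, but run entirely inside $X\setminus D$: the continuation strips for a compactly supported Hamiltonian isotopy in $X\setminus D$ satisfy the same energy-near-$D$ estimate, so their moduli spaces also compactify away from $D$, yielding chain homotopy equivalences. For (iii), when $L_0=L_1$ or the $L_i$ are simply connected, all relevant disc classes have $\omega$-area in a discrete monotone family and one can show the differential, after rescaling, is defined over $\Q$ (no convergence issue), so the Novikov parameter can be specialized to $T=1$. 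For (iv), with $L_0=L_1=L$ one perturbs by a Morse function on $L$ and filters the complex by the energy $\omega(\beta)$: the $E^1$ (or after a reindexing, $E^2$) page is the Morse complex of $L$, i.e.\ $H(L;\Q)$, and the spectral sequence converges to $HF(L,L;X\setminus D)$ because the filtration is exhaustive and bounded on each energy window. The main obstacle throughout is the compactness step — verifying rigorously that holomorphic curves cannot sneak into $D$ in the limit — since this is precisely the point at which the usual closed-string monotonicity is unavailable and must be substituted by the relative monotonicity plus the local K\"ahler geometry near $D$; all the algebra of (i)--(iv) is downstream of getting that analytic input right.
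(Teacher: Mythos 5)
The proposal misses the central analytic difficulty that the theorem is designed to overcome, and the ``energy--near--$D$ plus positivity of intersection'' mechanism you invoke does not actually rule out the limit configurations that cause trouble. The problem is not a sequence of strips whose images approach $D$; it is Gromov limits in which \emph{sphere bubbles become contained in $D$} even though every curve in the sequence has image disjoint from $D$. For such bubbles, positivity of intersection gives no lower bound: a sphere $v : S^2 \to D$ has $[v]\cdot [D] = \int v^* c_1(N_D)$, which can be negative, so you cannot conclude ``intersection $\ge 1$, hence Maslov $\ge 2$.'' Consequently, taking the closure of $\overset{\circ}{\mathcal M}(p,q;\beta;L_0,L_1)$ inside the usual stable-map compactification of $X$ does \emph{not} produce a space with only the codimension-one boundary~\eqref{splitend}: as illustrated in Figure~\ref{zu0}, a configuration of type (c) (disk with a chain of sphere bubbles inside $D$) is simultaneously the limit of sequences of honest disks in $X\setminus D$ (type (A), with cancelling intersection multiplicities) and of sequences of stable maps that genuinely hit $D$ (type (B), where the multiplicities do not cancel). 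If you include (c) with no extra data, the two kinds of degenerations cannot be separated, and codimension-one splittings of (b)--type configurations can produce a disk-plus-bubble component that is not monotone, destroying the argument that $\partial^2 = 0$.

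The paper's resolution is precisely to replace the stable-map compactification with the compactification of \emph{relative Gromov--Witten theory} (à la Li--Ruan, Ionel--Parker, Jun Li, Gross--Siebert, Parker, Tehrani--Zinger), in which a limit configuration touching $D$ carries additional matching/expansion data recording how the curve degenerates into the normal cone of $D$; this separates the (A)- and (B)-limits into distinct points of a new moduli space $\mathcal M^{\rm RGW}$, whose boundary strata are then exactly the fiber products in~\eqref{splitend} and which carries a Kuranishi structure. You did correctly identify that once such a compactification with properties (I)--(II) is in hand, the downstream algebra giving (i)--(iv) is routine; and your remark that when $X\setminus D$ is convex at infinity the closure in stable maps to $X$ \emph{does} suffice agrees with the paper. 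But the theorem makes no convexity assumption, and so the step you label as the hard one is in fact where a fundamentally different compactification is needed, not just a neighborhood estimate near $D$.
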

\begin{remark}
	The main point in Theorem \ref{mainthm} is that 
	we do {\it not} assume $L_i$ is a monotone Lagrangian submanifold 
	in $X$, for $i=1,2$. The general theory of \cite{fooobook,fooobook2}
	says that there is an obstruction to define Floer homology $HF(L_0,L_1)$.
	The Floer homology $HF(L_0,L_1;X\setminus D)$ uses only holomorphic disks 
	which `do not intersect' $D$. Therefore, the situation is similar to
	monotone Lagrangian Floer homology due to Oh \cite{Oh}.
	If $X\setminus D$ is convex at infinity, then \cite{fooobook,fooobook2} imply 
	that we can define Floer homology $HF(L_0,L_1;X\setminus D)$ satisfying the properties mentioned in Theorem \ref{mainthm}.
	Note that in Theorem \ref{mainthm}, we do not impose any kind of convexity assumption for $X \setminus D$.
	The specialization of the construction of Theorem \ref{mainthm} to the case that $L_i$ 
	is exact and the homology class of each component of $D$ is proportional to the Poincar\'e dual 
	of $[\omega]$ is given in \cite{sh}.
	
\end{remark}

\begin{proof}[Sketch of the proof]
	We assume that $L_0$ is transversal to $L_1$. Let $p,q \in L_0 \cap L_1$. We consider the moduli space
	$\overset{\circ}{\mathcal M}(p,q;\beta;L_0,L_1)$ of pseudo-holomorphic maps to $X\backslash D$
	which satisfy \eqref{CR} and \eqref{form38} for $G=\{1\}$. Following Floer \cite{Fl:LHF} and Oh \cite{Oh} 
	(see also \cite[Chapter2]{fooobook}), we can define $HF(L_0,L_1;X\setminus D)$ 
	if we obtain a compactification ${\mathcal M}^{\rm RGW}(p,q;\beta;L_0,L_1)$ of our moduli space 
	$\overset{\circ}{\mathcal M}(p,q;\beta;L_0,L_1)$ with the following properties:
\begin{enumerate}\label{formula31}
	\item[(I)] The compactification ${\mathcal M}^{\rm RGW}(p,q;\beta;L_0,L_1)$ 
	 carries a Kuranishi structure with boundary and corner.
	\item[(II)] The codimension one boundary of this moduli space is identified with the union of 
		\begin{equation}\label{splitend} 
		{\mathcal M}^{\rm RGW}(p,r;\beta_1;L_0,L_1) \times 
		{\mathcal M}^{\rm RGW}(r,q;\beta_2;L_0,L_1)
	\end{equation}
	for various $r \in L_1 \cap L_2$ and $\beta_1,\beta_2$
	with $\beta_1 + \beta_2 = \beta$. The (virtual) dimension $d(\beta)$ of ${\mathcal M}^{\rm RGW}(p,q;\beta;L_0,L_1)$
	is determined by the homology class $\beta$ and satisfies $d(\beta) = d(\beta_1) + d(\beta_2) + 1$ 
	for the boundary component in \eqref{splitend}.
\end{enumerate}
	We fix a {\it multisection}\footnote{See \cite[Definition A1.21]{fooobook2})} (or equivalently a multivalued perturbation) 
	which is transversal to $0$ and which is compatible with the description of the boundary as in \eqref{splitend}.
	Note that transversality implies that its zero set is the empty set when the virtual dimension is negative.
	Therefore, the zero set is a finite set in the case that the virtual dimension is $0$.
	Assuming $d(\beta) = 0$, let $\# {\mathcal M}^{\rm RGW}(p,q;\beta;L_0,L_1)$ 
	be the number (counted with sign and multiplicity) of the
	points in the zero set of the perturbed moduli space. Then we define:
	\[
	  \partial [p] = \sum_{q,\beta} \# {\mathcal M}^{\rm RGW}(p,q;\beta;L_0,L_1)[q].
	\]
	Here the sum is taken over all $q \in L_0 \cap L_1$ and the homology classes $\beta$ such that $d(\beta) = 0$. 
	In case $L_0 = L_1$ or $\pi_1(L_0) = \pi_1(L_1) = 0$ the right hand side is a finite sum. Otherwise we use appropriate Novikov ring and put the 
	weight $T^{\omega(\beta)}$ to each of the 
	terms of the right hand side, so that the right hand side converges in $T$ adic topology.
	As it is customary with other Floer theories, we can show $\partial\circ \partial = 0$ 
	using the moduli spaces associated to the homology classes $\beta$
	 with $d(\beta) =1$. (See, for example, \cite{Fl:LHF,Oh}.) The proofs of parts $(ii)$ and $(iii)$) are also similar to the 
	 proof of the corresponding statements in the case of usual monotone Lagrangian Floer homology.
	 
	If $X\backslash D$ is convex at infinity, then we can let ${\mathcal M}^{\rm RGW}(p,q;\beta;L_0,L_1)$
	to be the closure of $\overset{\circ}{\mathcal M}(p,q;\beta;L_0,L_1)$ in the moduli space of stable holomorphic maps 
	to $X$. In this case, monotonicity can be used to show that \eqref{formula31} gives all the configurations
	 appearing in the boundary of ${\mathcal M}^{\rm RGW}(p,q;\beta;L_0,L_1)$.
	 
	 In case we do not assume monotonicity, disk bubble can occur as the other type 
	 of boundary components. (See \cite[Subsection 2.4.5]{fooobook} for example.)

	The stable map compactification in \cite[Subsection 7.1.4]{fooobook2} does {\it not} give a compactification 
	${\mathcal M}^{\rm RGW}(p,q;\beta;L_0,L_1)$ with the required properties.
	The issue is that in the stable map compactification a stable map with a sphere bubble which 
	is contained completely in the divisor $D$ is included.
	 At the point of such stable maps, 
	 the limit of the following two kinds of sequence of stable maps are mixed up.
	\begin{enumerate}
	\item[(A)] 
		A limit of a sequence of pseudo-holomorphic disks $u_i : (D^2,\partial D^2) \to (X,L)$
		such that $u_i(D^2) \cap D = \emptyset$.
	\item[(B)]
		A limit of a sequence of stable maps $u_i : (\Sigma_i,\partial \Sigma_i) \to (X,L)$,
		where $\Sigma_i$ is a disk plus sphere bubbles, 
		and such that $u_i(\Sigma_i) \cap D \ne \emptyset$.
	\end{enumerate}
	We need to include (A) in our moduli space but (B) is not supposed to be 
	an element of the moduli space.
	
As it is shown in  Figure \ref{zu0}, elements given as the limit points of type (A) and type (B) can be mixed up in the stable map compactification.
Here all the sphere bubbles in the figure are contained in $D$. The numbers written in the sphere bubbles $S^2$ are the intersection numbers $[S^2] \cap D$. The numbers written at the roots of the sphere bubble are the intersection multiplicities of the disk with the divisor $D$.
The configuration shown as (a) is a limit of disks as in (A) above since $2 + (-2) = 1 + (-1) =0$.
The configuration shown as (b) is {\it not} a limit of disks as in (A),since $2 + (-1) \ne 0 \ne 1 + (-2)$. However, these two configurations can intersect 
in the limit, which is the stable map shown as (c) in the figure.

Note that a limit of the configuration (b) in the figure can split into two pieces as shown in the figure.Then the union of the disk component together with sphere bubble rooted on it is {\it not} monotone. Thus if we include (b), then there will be a trouble to show \eqref{splitend}.

\begin{figure}[h]
\centering
\includegraphics[scale=0.3]{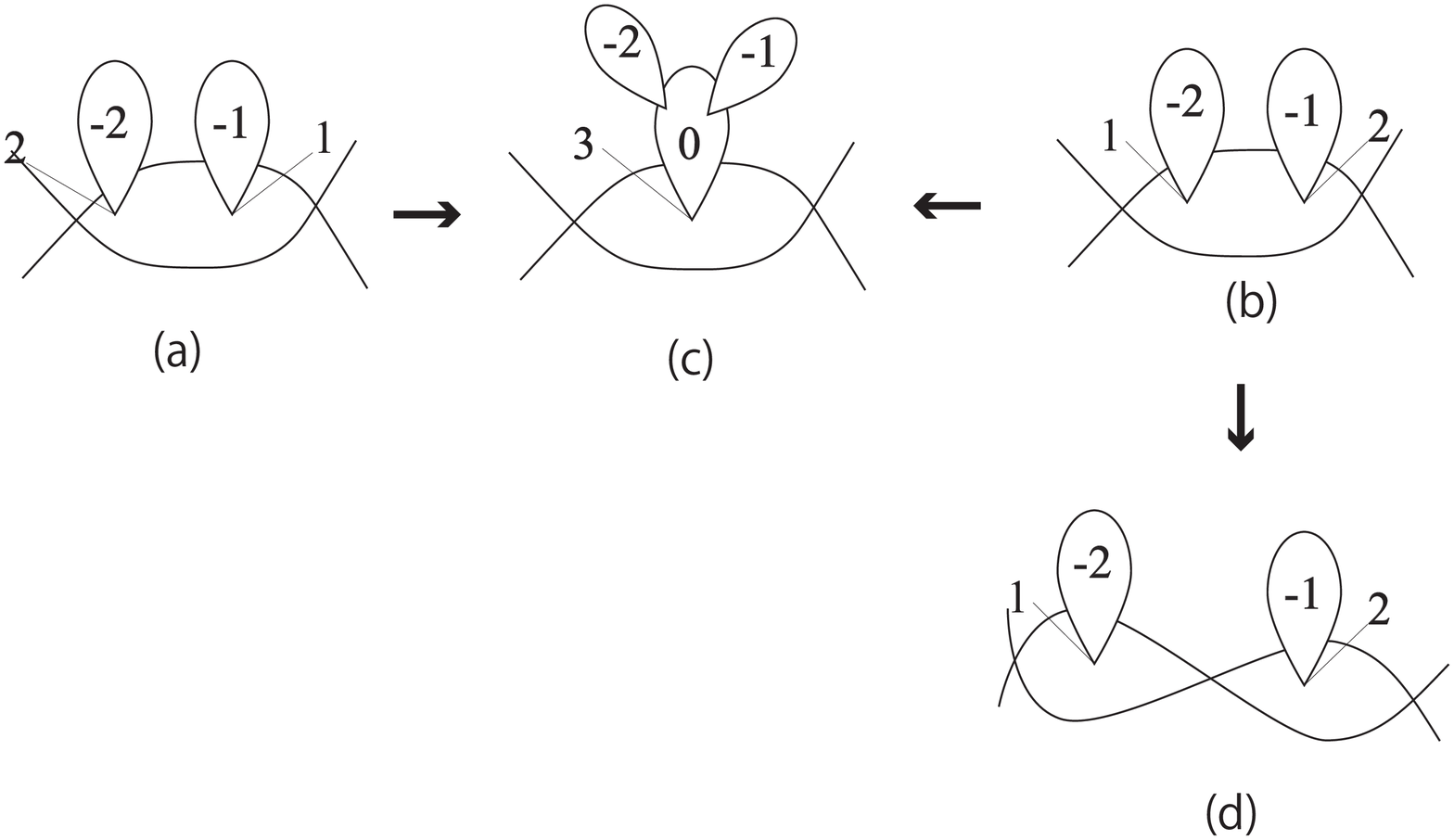}
\caption{(a) and (b) mixed up.}
\label{zu0}
\end{figure}


	The idea to resolve this issue is to use a compactification 
	which is different from the stable map compactification (in $X$).
	We use the compactification used in relative 
	Gromov-Witten theory, where the 
	limits of type (A) and type (B) are clearly separated. (See \cite{LR,IP,JL,JL2,groSie,pa,TZ}.)
	Namely in this compactification configuration (c) in the 
	figure comes with additional information so that the 
	limit of type (A) and of type (B) becomes a different element 
	in this compactifiation.
	Using this fact we can then show the above properties (I)(II).
\end{proof}

\section{The Atiyah-Floer Conjecture}
\label{Sec:AFori}

Floer's original instanton Floer homology is an invariant of 3-manifolds which have the same integral homology as the 3-dimensional sphere \cite{Fl:I}. Given an integral homology sphere $M$, he introduced a chain complex $C_*(M)$ with the differential $\partial$, whose homology is defined to be instanton Floer homology and is denoted by $I^*(M)$. The chain complex $C_*(M)$ is a free group generated by non-trivial flat $\SU(2)$-connections. Since any $\SU(2)$-bundle over a 3-manifold can be trivialized, these connections all have the same topological type. 

The differential $\partial$ is defined by considering the moduli space of instantons on the trivial $\SU(2)$-bundle $P$ over $\R \times M$. To be a bit more detailed, fix a product metric on $\R \times M$ corresponding to a fixed metric on $M$. The Hodge $*$-operator decomposes the space of 2-forms to {\it anti-self-dual} and {\it self-dual} forms. Then an instanton on $\R \times M$ is a connection $A$ on $P$ such that: 
\begin{equation} \label{instanton-eq}
	F^+(A)=0 \hspace{2cm} |\!|F(A)|\!|_2<\infty
\end{equation}
where $F^+(A)$ and $|\!|F(A)|\!|_2$ are respectively the self-dual part and the $L^2$ norm of the curvature of $A$. For any instanton $A$, there are flat connections $a_+$ and $a_-$ such that:
\begin{equation} \label{asymp}
	\lim_{t \to \pm \infty} A|_{\{t\}\times M}=a_{\pm}
\end{equation}
Translation in the $\R$-direction and $\SU(2)$-bundle automorphisms act on the space of instantons. The  quotient space of instantons satisfying \eqref{asymp} with respect to these two actions is denoted by $\mathcal M(a_-,a_+;M)$. Moreover, if we require $|\!|F(A)|\!|_2^2$ to be equal to a fixed real number $E$, then the resulting space is denoted by $\mathcal M(a_-,a_+;E;M)$. The differential $\partial(a)$ for a non-trivial flat connection $a$ is defined as:
\begin{equation*}
	\partial(a)=\sum \#\mathcal M(a,b; E;M)\cdot b
\end{equation*}
where the sum is over all $E$ and $b$ that $\mathcal M(a,b; E;M)$ is 0-dimensional. Here $\#\mathcal M(a,b; M)$ denotes the signed count of the points in the 0-dimensional space $\mathcal M(a,b;E; M)$. In general, we might need to perturb the equation in \eqref{instanton-eq} as the space of flat connections on $M$ and the space of instantons on $\R \times M$ might not be cut down by transversal equations.

There are several other versions of instanton Floer homology in the literature. The trivial connection on an integral homology sphere $M$ does not play any role in the definition of $I^*(M)$. An alternative version of this invariant, constructed in \cite{Don:YM-Floer}, uses the moduli spaces $\mathcal M(a,b,E;M)$ where $a$ or $b$ could be the trivial connection. We will write $\overline I^*(M)$ for this invariant, which is an $H_{{\rm PU}(2)}^*$-module\footnote{The original notation for this invariant in \cite{Don:YM-Floer} is $\overline{\overline{HF}}(M)$.}.

Next, we aim to construct a version of symplectic instanton Floer homology which is conjecturally isomorphic to $I^*(M)$ for an integral homology sphere $M$. We shall apply a combination of the constructions of the previous two sections to the following pair of a symplectic manifold and a smooth divisor, introduced in \eqref{ex} and \eqref{decom-ex}:
\begin{equation} \label{symp-div}
	(X,D):=({\widehat R}(\Sigma,p,\theta), \overline{\mu}^{-1}(\theta)/S^1)
\end{equation}
with $0<\theta < \frac{1}{2}$. This version of symplectic instanton Floer homology can be regarded as an equivariant version of a variation of the construction in \cite{MW}. (See Remark \ref{MW-AF}.) Fix a Heegaard splitting of the 3-manifold $M$:
\begin{equation}\label{form73}
	M = H^0_g \cup_{\Sigma_g} H^1_g.
\end{equation}
According to Proposition \ref{handle-body-lag}, we can form the Lagrangian submanifolds $\tilde R(H^i_g,p)$ of $X$ associated to this Heegaard splitting. The following Lemma about the intersection of these Lagrangians can be proved using {\it holonomy perturbations} \cite{Cliff:Cas-invt,Fl:I,Don:YM-Floer,He}. We omit the details here:

\begin{lemma}\label{lem7474}
	There are Hamiltonian isotopies of the Lagrangians $\tilde R(H^i_g,p)$ in $X\backslash D$ to submanifolds with clean intersection.
	Moreover, we can assume that each connected component of the intersection of the perturbed Lagrangians is
	either a point which consists of the trivial connection or a single ${\rm PU}(2)$-orbit.
\end{lemma}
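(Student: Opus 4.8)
The plan is to reduce the statement to a question about flat connections on the handlebodies and on $\Sigma_g$, and then to apply the holonomy perturbation technology of Floer and Donaldson. First I would recall that $\tilde R(H^i_g,p)$ is the space of (based) flat $\SU(2)$-connections on the handlebody $H^i_g$, restricted to the boundary; since $H^i_g$ is homotopy equivalent to a wedge of $g$ circles, $R(H^i_g)$ is identified with $\SU(2)^g$ (after choosing generators of $\pi_1$), and the based version $\tilde R(H^i_g,p)$ is a copy of this sitting inside ${\widehat R}(\Sigma,p,\theta)$ via the moment-map/extended-moduli description. The intersection $\tilde R(H^0_g,p)\cap\tilde R(H^1_g,p)$ then consists of flat connections on $\Sigma_g$ which extend over \emph{both} handlebodies, i.e. flat $\SU(2)$-connections on the closed $3$-manifold $M$; its components away from the trivial connection correspond to representations $\rho:\pi_1(M)\to\SU(2)$ modulo conjugation. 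The central point is that without perturbation these intersections are typically neither transverse nor clean, and may contain components of positive dimension of the ``wrong'' type, so one must perturb.

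The key steps, in order: (1) set up holonomy perturbations supported in a collar $\Sigma_g\times[0,1]\subset X\setminus D$ (away from the divisor $D=\overline\mu^{-1}(\theta)/S^1$), following \cite{Cliff:Cas-invt,Fl:I,Don:YM-Floer,He}; such a perturbation is given by a collection of embedded loops in $\Sigma_g\times[0,1]$ together with conjugation-invariant functions on $\SU(2)$, and it deforms one of the Lagrangians, say $\tilde R(H^1_g,p)$, by a compactly-supported Hamiltonian isotopy in $X\setminus D$, hence preserving all hypotheses of Theorem \ref{mainthm}. (2) Observe that the perturbed intersection is the zero set of a perturbed flatness equation on $M$, equivariantly for the residual $\PU(2)$-action (the perturbations can be chosen $\PU(2)$-invariant since the functions on $\SU(2)$ are conjugation-invariant and the loops carry no basepoint data beyond $p$). (3) Invoke a Sard–Smale / genericity argument in the space of such perturbations to achieve that, at each non-central component, the perturbed equation cuts out a manifold on which $\PU(2)$ acts with finite stabilizers, so the component is a single $\PU(2)$-orbit (for $\SU(2)$ the only possibilities are: a point fixed by $\PU(2)$, which forces the connection to be central/trivial; an orbit $\PU(2)/U(1)\cong S^2$ coming from reducible-but-nontrivial connections; or a free orbit $\PU(2)\cong\SO(3)$ — and one uses that a generic perturbation removes all higher-dimensional families by reducing the expected dimension of the moduli space to zero in the quotient). (4) Check that at the trivial connection the perturbation can be taken to vanish to high enough order that it remains an isolated point of the intersection, using that $M$ is an integral homology sphere so the trivial connection is isolated among flat connections on $\Sigma_g$ that extend trivially over both handlebodies. (5) Verify cleanness: at each surviving component $N$, show $T_xN=T_x\tilde R(H^0_g,p)\cap T_x(\text{perturbed }\tilde R(H^1_g,p))$, which amounts to identifying the Zariski tangent space of the perturbed equation with the group cohomology $H^1$ of $M$ twisted by $\rho$, and arranging by genericity that this equals the tangent space to the $\PU(2)$-orbit (for free orbits $H^1=0$; for the $S^2$-orbits $H^1$ is the tangent to $S^2$).

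The main obstacle I expect is step (3)–(5): producing a \emph{single} perturbation that simultaneously (a) is $\PU(2)$-equivariant, (b) achieves cleanness at every non-central component with each component exactly one orbit, and (c) leaves the trivial connection isolated and unperturbed — all while staying compactly supported in $X\setminus D$. The equivariance is what makes this genuinely harder than the classical (non-equivariant) holonomy-perturbation transversality in instanton Floer theory, where one perturbs to get isolated \emph{points}; here one cannot break the $\PU(2)$-symmetry, so the best one can hope for is transversality ``in the $\PU(2)$-equivariant sense,'' i.e. transversality of the induced section on the quotient orbifold. Making this precise requires a stratified/equivariant Sard–Smale theorem (stratifying $L_0\cap L_1$ by orbit type, perturbing stratum by stratum while respecting the closure relations) together with a careful count showing the relevant equivariant indices are non-negative exactly at the orbit-dimension, so that a generic perturbation leaves nothing else. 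I would handle this by adapting the equivariant transversality arguments already used for the $\PU(2)$-action in instanton theory (e.g. in \cite{Don:YM-Floer,AB:equiv-I}) to the holonomy-perturbation framework, and by using the $\SU(2)$-specific fact that the isotropy lattice has only the three levels $\{1\}\subset U(1)\subset\PU(2)$, which sharply limits the stratification. The remaining points — compact support away from $D$, Hamiltonian character of the isotopy, and the behavior at the trivial connection — are then comparatively routine, relying on the collar being disjoint from $D$ and on the integral-homology-sphere hypothesis.
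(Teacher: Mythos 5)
The paper does not actually write out a proof of this lemma---it merely points to the holonomy-perturbation literature \cite{Cliff:Cas-invt,Fl:I,Don:YM-Floer,He}---and your sketch is the argument those references supply, so you are on the same route. One remark worth keeping in mind: holonomy perturbations are built from conjugation-invariant class functions on $\SU(2)$, hence are automatically ${\rm PU}(2)$-equivariant, so the equivariance is not an extra burden on top of the standard theory; the cleanness you want is exactly the usual nondegeneracy statement that a generic holonomy perturbation kills the twisted first cohomology $H^1$ of the deformation complex at every nontrivial perturbed flat connection on $M$, and since $M$ is an integral homology sphere these are all irreducible after a small perturbation, so the $S^2$-orbit case you leave open in steps (3) and (5) does not in fact occur---consistent with the paper's subsequent decomposition \eqref{decompLL} in which each $R_a\cong{\rm PU}(2)$.
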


Suppose $L_i$ denotes the perturbation of the Lagrangian $\tilde R(H^i_g,p)$ provided by Lemma \ref{lem7474}. Since $\tilde R(H^i_g,p)$ is monotone in $X\backslash D$ \cite{MW}, the Lagrangian $L_i$ is also monotone in $X\backslash D$. The manifold $L_i$ is diffeomorphic to the Cartesian product of $g$ copies of $\SU(2)$ \cite{MW}. In particular, it can be equipped with a spin structure. The intersection of $L_0$ and $L_1$ can be decomposed as:
\begin{equation}\label{decompLL}
	L_0 \cap L_1=\{\theta\} \cup \bigcup_{a \in \mathcal A} R_{a}
\end{equation}
where $R_a \cong {\rm PU}(2)$. Here $\theta$ denotes the trivial connection. Let $\mathcal A_+ = \mathcal A \cup \{\theta\}$ and $R_\theta= \{\theta\}$.

For $a,b \in \mathcal A$, define $\overset{\circ}{\mathcal M}(a,b;\beta;L_0,L_1)$ to be the moduli space of maps $u : \R \times [0,1] \to X \setminus D$ which satisfy the analogues of \eqref{CR}, \eqref{form38} and represent the homology class $\beta \in H(a,b)$. As before, we also identify two maps $u$ and $u'$ if  $u'(\tau,t) = u(\tau+\tau_0,t)$ for some $\tau_0 \in \R$. There is an obvious ${\rm PU}(2)$ action on this moduli space. We can also form the restriction maps:
\begin{equation}\label{form7474}
	{\rm ev}_{-\infty} : \overset{\circ}{\mathcal M}(a,b;\beta;L_0,L_1) \to R_a,\quad
	{\rm ev}_{+\infty} : \overset{\circ}{\mathcal M}(a,b;\beta;L_0,L_1)\to R_b.
\end{equation}
A combination of the proofs of Theorems  \ref{existequiv} and \ref{mainthm} can be used to prove the following Proposition: 
\begin{prop}\label{prop7575}
	There exists a compactification of $\overset{\circ}{\mathcal M}(a,b;\beta;L_0,L_1)$, denoted by ${\mathcal M}^{\rm RGW}(a,b;\beta;L_0,L_1)$,
	which satisfies the following properties:
	\vspace{-5pt}
\begin{enumerate}
	\item[(i)] 
	This space has a Kuranishi structure with corners. The ${\rm PU}(2)$ action of $\overset{\circ}{\mathcal M}(a,b;\beta;L_0,L_1)$ 
	extends to ${\mathcal M}^{\rm RGW}(a,b;\beta;L_0,L_1)$ and the Kuranishi structure is ${\rm PU}(2)$-equivariant.
	The evaluation maps in \eqref{form7474} also extend to ${\mathcal M}^{\rm RGW}(a,b;\beta;L_0,L_1)$ 
	and are underlying maps of 
	${\rm PU}(2)$-equivariant weakly submersive maps.\footnote{See \cite[Definition 32.1]{fooospectr} for its definition.}	
	\item[(ii)]
	Let $d(\beta)$ be the virtual dimension of ${\mathcal M}^{\rm RGW}(a,b;\beta;L_0,L_1)$. 
	For any $d$, there are only finitely many choices of $\beta$ such that 
	${\mathcal M}^{\rm RGW}(a,b;\beta;L_0,L_1)$ is nonempty and $d(\beta) = d$.
	There also exists ${\rm deg}:\mathcal A \to \Z/8\Z$ such that $\deg_+(\theta)=0$ and for any $a \in \mathcal A$,
	$b \in \mathcal A_+$, $\beta \in H(a,b)$,
	we have:
	\begin{equation} \label{mod-8-1}
	  d(\beta) \equiv {\rm deg}(b) - {\rm deg}(a) + 2 \mod 8.
	\end{equation}
	Moreover, if $b \in \mathcal A_+$ and $\beta \in H(\theta,b)$ then:
	\begin{equation}\label{mod-8-2}
	  d(\beta) \equiv{\rm deg}(b)  - 1 \mod 8.
	\end{equation}
	\item [(iii)]
	The codimension one boundary components of ${\mathcal M}^{\rm RGW}(a,b;\beta;L_0,L_1)$ consists of fiber products
	\begin{equation}\label{form7575}
		{\mathcal M}^{\rm RGW}(a,c;\beta_1;L_0,L_1) \times_{R_c} {\mathcal M}^{\rm RGW}(c,b;\beta_2;L_0,L_1),
	\end{equation}
	where the union is taken over $c \in \mathcal A_+$ and $\beta_1 \in H(a,c),\beta_2 \in H(c,b)$ with $\beta_1 \#\beta_2 = \beta$.
\end{enumerate}
\end{prop}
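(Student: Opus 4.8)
The strategy is to adapt the proofs of Theorem~\ref{existequiv} and Theorem~\ref{mainthm} simultaneously: the divisor-complement compactification of \cite{DF} handles the non-monotonicity coming from $D$, while the $G$-equivariant Kuranishi package of \cite{FuFu5} handles the $\mathrm{PU}(2)$-action. First I would set up the relative Gromov--Witten (RGW) compactification $\mathcal M^{\mathrm{RGW}}(a,b;\beta;L_0,L_1)$ of the strip moduli space, exactly as in the proof of Theorem~\ref{mainthm}, using the compactification of relative Gromov--Witten theory rather than the naive stable-map compactification in $X$ so that limits of type (A) and type (B) in Figure~\ref{zu0} are separated. Since $\overline\mu^{-1}(\theta)/S^1$ is a symplectic hypersurface and $X$ is K\"ahler near $D$, the local model for disks touching $D$ is available, and the RGW limits carry the extra matching data that makes (b)-type bubbles a different (and discarded) kind of limit from (a)-type ones. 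The monotonicity of $\tilde R(H^i_g,p)$ in $X\setminus D$ from \cite{MW} (inherited by $L_i$ after the Hamiltonian perturbation of Lemma~\ref{lem7474}) is used, as in Theorem~\ref{mainthm}, to rule out disk bubbling among configurations that actually occur, giving (iii).

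For part (i), the key observation is that the $\mathrm{PU}(2)$-action on $X$ preserves $\omega$ and $D$ (both $L_i$ are $\mathrm{PU}(2)$-invariant by Proposition~\ref{handle-body-lag}), and a $\mathrm{PU}(2)$-equivariant almost complex structure compatible with $\omega$ and adapted to $D$ exists because the relevant spaces of invariant structures are convex, hence contractible. One then runs the construction of a $G$-equivariant Kuranishi structure from \cite{FuFu5} on $\mathcal M^{\mathrm{RGW}}(a,b;\beta;L_0,L_1)$: the equivariant gluing and obstruction-bundle constructions go through verbatim once the RGW local charts are used in place of the stable-map ones, since the $\mathrm{PU}(2)$-action is by biholomorphisms near $D$ and commutes with the normal-crossing blow-up data. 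The evaluation maps $\mathrm{ev}_{\pm\infty}$ land in the $\mathrm{PU}(2)$-orbits $R_a$, $R_b$, are $\mathrm{PU}(2)$-equivariant, and can be made weakly submersive by the standard trick of enlarging the Kuranishi charts (adding extra obstruction directions pulled back through the evaluation maps), compatibly with the equivariant structure; this is the divisor-complement analogue of the submersivity of $\mathrm{ev}_{+\infty}$ in the proof of Theorem~\ref{existequiv}.

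For part (ii), finiteness of the set of $\beta$ with given virtual dimension and nonempty moduli space follows from monotonicity in $X\setminus D$: the energy $\omega(\beta)$ is a positive-proportional function of $\mu_{L_i}(\beta)$ on the classes contributing to the boundary operator, and Gromov compactness (in the RGW sense) bounds the number of components for bounded energy, so only finitely many $\beta$ realize a fixed index. The mod-$8$ index formula is the usual spectral-flow/Maslov computation for the linearized Cauchy--Riemann operator with Lagrangian boundary conditions along $L_0$, $L_1$: one defines $\deg$ via the absolute $\mathbb Z/8\mathbb Z$-grading on generators (using that $L_i\cong\SU(2)^g$ is spin, so the grading is $\mathbb Z/(2\cdot\text{minimal Maslov number})$, and here the minimal Maslov number is $4$), and \eqref{mod-8-1} is additivity of the index under concatenation together with the $+2$ coming from the dimension of the strip's translation-and-evaluation setup; \eqref{mod-8-2} is the same computation with $a=\theta$ the trivial connection, where the isotropy jumps and the normalization $\deg(\theta)=0$ forces the shift by $1$ instead of by the orbit dimension.

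The main obstacle I expect is part (i): making the relative/RGW compactification \emph{and} the $\mathrm{PU}(2)$-equivariant Kuranishi structure coexist — in particular checking that the equivariant gluing analysis of \cite{FuFu5} is compatible with the RGW (expanded-target / rubber) gluing of \cite{DF,LR,IP}, and that the combined Kuranishi structure still has boundary and corners matching \eqref{form7575} with $\mathrm{PU}(2)$-equivariant coordinate changes. Everything else is a fairly mechanical merger of the two cited proofs; this compatibility is where the real technical content lies, and it is precisely the point that the paper defers to a future, more detailed treatment.
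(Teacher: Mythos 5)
The paper gives no proof of Proposition~\ref{prop7575} beyond the one-line remark that it is ``a combination of the proofs of Theorems~\ref{existequiv} and \ref{mainthm}''; your proposal is a faithful expansion of exactly that: run the RGW compactification of \cite{DF} to separate the type~(A)/(B) limits and get (iii), overlay the $G$-equivariant Kuranishi package of \cite{FuFu5} for (i), use monotonicity in $X\setminus D$ plus Gromov compactness for finiteness in (ii), and correctly identify the compatibility of RGW gluing with equivariant Kuranishi gluing as the genuine technical obstacle the paper defers. This matches the paper's intended approach.

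One minor caveat for your own records: your parenthetical justification of the $\Z/8\Z$ grading (``grading is $\Z/(2\cdot\text{minimal Maslov number})$, and here the minimal Maslov number is $4$'') is a shortcut that isn't spelled out in the paper and would require a separate verification in the extended-moduli-space setting; the paper simply posits the existence of a $\deg\colon\mathcal A\to\Z/8\Z$ satisfying \eqref{mod-8-1}--\eqref{mod-8-2}, and the shifts $+2$ vs.\ $-1$ differ by $3=\dim R_c$, which is what your remark about the jump at $\theta$ is implicitly tracking. But this does not change the correctness of the overall route, which coincides with the paper's.
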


\begin{remark}
	Characterization of co-dimension one boundary components in \eqref{form7575} implies that if $c \ne \theta$, then:
	\[
	  d(\beta_1 \#\beta_2) = d(\beta_1) +  d(\beta_2) - 3
	\]
	 and if $c = \theta$, then:
	\[
	  d(\beta_1 \#\beta_2) = d(\beta_1) +  d(\beta_2).
	\]
	This is consistent with the identities in \eqref{mod-8-1} and \eqref{mod-8-2}.
\end{remark}

Analogous to the construction of Section \ref{sec:equivariant}, we can use the compactification provided by this proposition to define a Lagrangian Floer homology group :
\begin{equation} \label{symp-equiv}
	HF_{{\rm PU}(2)}(L_0,L_1;X \setminus D)
\end{equation}
for an integral homology sphere. This Lagrangian Floer homology group is a module over $H^*_{{\rm PU}(2)}= H^*_{{\SU}(2)}$. The following conjecuture states that this module is a 3-manifold invariant. This invariant can be regarded as a version of symplectic instanton Floer homology and is denoted by $\overline I^*_{symp}(M)$. 
\begin{conjecture} \label{symp-equiv}
	The $H^*_{{\rm PU}(2)}$-modules in \eqref{symp-equiv} for different choices of Heegaard splitting 
	are isomorphic to each other.
\end{conjecture}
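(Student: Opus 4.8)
\textbf{Strategy and reduction to elementary moves.}
The plan is the usual one for establishing invariance of a Floer-type theory: relate any two Heegaard splittings of $M$ as in \eqref{form73} by a short list of elementary moves and check invariance under each. By the Reidemeister--Singer theorem, any two such splittings become isotopic after finitely many stabilizations, so it suffices to prove invariance under (a) an ambient isotopy of the splitting surface $\Sigma_g \subset M$, and (b) a single stabilization passing from $\Sigma_g$ to $\Sigma_{g+1}$. One must in addition verify independence of the auxiliary choices entering the construction of $HF_{{\rm PU}(2)}(L_0,L_1;X\setminus D)$ in \eqref{symp-equiv}: the cut parameter $\theta\in(0,\tfrac12)$, the base point $p\in\Sigma$, the ${\rm PU}(2)$-equivariant $\omega$-compatible almost complex structure, the holonomy perturbations of Lemma \ref{lem7474}, and the continuous family of multisections on the Kuranishi structures of Proposition \ref{prop7575}. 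Each of these is handled by a one-parameter ("continuation") version of the moduli problem of Proposition \ref{prop7575}, producing an $H^*_{{\rm PU}(2)}$-module chain homotopy equivalence; $\theta$-independence additionally uses that $\{(\widehat R(\Sigma,p,\theta),\overline\mu^{-1}(\theta)/S^1)\}_{0<\theta<1/2}$ is a smooth family of symplectic manifolds with divisor along which the Lagrangians $\tilde R(H^i_g,p)$ deform as a family, so the relative-$D$ monotone package of Theorem \ref{mainthm} in its ${\rm PU}(2)$-equivariant form applies fibrewise.

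\textbf{Isotopy invariance.}
If two splitting surfaces are related by an ambient isotopy of $M$, the handlebodies are carried into one another, and the embeddings of Proposition \ref{handle-body-lag} show that the Lagrangians $\tilde R(H^i_g,p)$ are carried to Hamiltonian-isotopic Lagrangians of $X=\widehat R(\Sigma,p,\theta)$. The isotopy can be arranged to be supported away from $D$ and to be ${\rm PU}(2)$-equivariant, hence is a Hamiltonian isotopy relative to $D$ in the sense of Section \ref{sec:divisor complement}. Invariance is then the ${\rm PU}(2)$-equivariant refinement of Theorem \ref{mainthm}(ii), obtained by running the moduli spaces of Proposition \ref{prop7575} for the associated time-dependent relative-$D$ Hamiltonian, exactly as in the proof of Theorem \ref{existequiv}.

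\textbf{Stabilization invariance.}
A stabilization is realized by an elementary cobordism $W$ (a $1$-handle attachment) from $\Sigma_g$ to $\Sigma_{g+1}$, which on extended moduli spaces induces a ${\rm PU}(2)$-invariant Lagrangian correspondence
\[
  L_W \;\subset\; \bigl(\widehat R(\Sigma_g,p,\theta)\bigr)^{-} \times \widehat R(\Sigma_{g+1},p',\theta),
\]
namely the (cut-down) moduli space of flat connections on the trivial bundle over $W$ together with its two boundary restrictions, compatible with the divisors on the two ends and satisfying the embedded geometric-composition identities
\[
  L_W \circ \tilde R(H^i_g,p) \;=\; \tilde R(H^i_{g+1},p'), \qquad i=0,1.
\]
The strategy is to form the ${\rm PU}(2)$-equivariant, relative-$D$ \emph{quilted} Floer homology of the pair with an extra seam labelled by $L_W$, and to apply the strip-shrinking ("geometric composition equals algebraic composition") theorem of Wehrheim and Woodward, suitably adapted, to identify this quilted group with $HF_{{\rm PU}(2)}$ on both ends. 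Equivalently, and more concretely in the present set-up, one can bypass correspondences: adding a handle exhibits $\widehat R(\Sigma_{g+1},p',\theta)$ near the relevant Lagrangians as a fibre bundle over $\widehat R(\Sigma_g,p,\theta)$ whose fibre is a standard local model in which the two handlebody Lagrangians meet in a single free ${\rm PU}(2)$-orbit, and a K\"unneth-type degeneration then identifies the Floer complex upstairs with the one downstairs tensored with a rank-one factor.

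\textbf{Main obstacle.}
The serious work is entirely in stabilization invariance, and specifically in transporting the strip-shrinking machinery into the present context. One must: (i) construct $L_W$ and verify that it is monotone \emph{relative to} the divisors on the two extended cut spaces (not merely in the closed spaces of Theorem \ref{thm3434}), is ${\rm PU}(2)$-invariant, and has the stated embedded geometric composition with the handlebody Lagrangians; (ii) set up the relative-Gromov--Witten-style compactification of the quilted strips, in the spirit of the proof of Theorem \ref{mainthm}, so that discs and spheres meeting $D$ are separated off and the problematic configurations of Figure \ref{zu0} do not enter the boundary; and (iii) carry out the neck-stretching and figure-eight-bubbling analysis controlling the $\varepsilon\to 0$ limit of the width of the middle strip, while simultaneously maintaining ${\rm PU}(2)$-equivariant transversality through continuous families of multisections and keeping track of the trivial-connection stratum $R_\theta\subset L_0\cap L_1$ in \eqref{decompLL}, where the ${\rm PU}(2)$-action is not free and the index bookkeeping of \eqref{mod-8-1}--\eqref{mod-8-2} must be applied with care. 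I expect step (iii)---the equivariant, relative-$D$ figure-eight bubbling estimate---to be the principal difficulty, since it marries the hardest analytic input of the correspondence formalism with the two structural refinements (equivariance and the divisor complement) that are the whole point of the construction.
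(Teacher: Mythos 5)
What you are trying to prove is stated as a \emph{conjecture} in the paper; there is no proof there to compare against, only Remark \ref{inv}, which sketches the intended route. Your overall plan---reduce via Reidemeister--Singer to isotopies and a single stabilization, get isotopy invariance from an equivariant relative-$D$ continuation argument, and get stabilization invariance from a quilted/Lagrangian-correspondence argument in the style of \cite{MW}---coincides with what the authors describe in Remark \ref{inv}, so you have rediscovered their intended strategy rather than found a different one.

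The genuine gap is exactly the one the paper flags and that your own ``main obstacle'' paragraph circles without resolving. Quilted strips with a seam over a correspondence between $(X_1,D_1)$ and $(X_2,D_2)$ live in $X_1\times X_2$, where the relevant divisor $(D_1\times X_2)\cup(X_1\times D_2)$ is \emph{normal crossing}, not smooth; the compactification machinery of Section \ref{sec:divisor complement} and Theorem \ref{mainthm} is built only for smooth $D$, and the normal-crossing extension is explicitly left open in the paper (Conjecture \ref{cong59}, Remark \ref{normal-crossing}). Your step (ii) says this can be done ``in the spirit of the proof of Theorem \ref{mainthm},'' but that is not a step, it is the open problem. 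Your alternative bypass---claiming $\widehat R(\Sigma_{g+1},p',\theta)$ fibres over $\widehat R(\Sigma_g,p,\theta)$ near the Lagrangians and invoking a K\"unneth degeneration---is unsupported and is not what the paper proposes: there is no such fibration of cut-down extended moduli spaces, and even a usable local product model near $D$ and the Lagrangians would have to be constructed and its Floer-theoretic consequences justified, which is again where the missing analysis sits. The paper's suggested way through is different: adopt the alternative compactification $\mathcal M'$ of \cite[Section 12]{FuFu50}, in which sphere bubbles landing in the two product factors are tracked separately, so that one can remain in the smooth-divisor regime. If you want a realistic path to closing Conjecture \ref{symp-equiv}, develop that compactification for quilted strips rather than trying to port the full strip-shrinking package through a normal-crossing divisor theory that does not yet exist.
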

\begin{remark} \label{inv}
	We hope to address Conjecture \ref{symp-equiv} in the same way as in the proof of the corresponding result in \cite{MW}. 
	(The result of \cite{MW} can be regarded as a non-equivariant version of Conjecture \ref{symp-equiv}.) 
	Following the arguments in \cite{MW} requires us to consider quilted Floer homology of Lagrangian correspondences such that each 
	Lagrangian correspondence is from a pair $(X_1,D_1)$ of a symplectic manifold and a smooth divisor to another such pair $(X_2,D_2)$. 
	Consequently, we need to study the moduli space of holomorphic curves for the pair $(X_1\times X_2, (D_1 \times X_2) \cup (X_1 \times D_2))$. 
	
	The space $(D_1 \times X_2) \cup (X_1 \times D_2)$ is a normal crossing divisor in $X_1\times X_2$. 
	The extension of the theory of Section \ref{sec:divisor complement} 
	to normal crossing divisors is the content of a work in progress and its details have not been completely worked out yet. 
	(See Conjecture \ref{cong59} and Remark \ref{normal-crossing}.) 
	However, as it is explained in \cite[Section 12]{FuFu50}, 
	we can use a different compactification of holomorphic discs whose target is the product $X_1\times X_2$. 
	This compactification is denoted by $\mathcal M^{\prime}$ and is discussed in \cite[Section 12]{FuFu50}. 
	In this compactification, the sphere bubbles on two factors are studied separately.
	It is plausible that adapting this construction to our set up allows us to avoid the case of holomorphic discs
	 in the complement of normal crossing divisors and to work only with smooth divisors.
\end{remark}

There is an alternative version of symplectic instanton Floer homology constructed by the moduli spaces ${\mathcal M}^{\rm RGW}(a,b;\beta;L_0,L_1)$. The ${\rm PU}(2)$ action on the space ${\mathcal M}^{\rm RGW}(a,b;\beta;L_0,L_1)$
is free unless $a=b=\theta$. If $a=b=\theta$, then the action is still free unless $\beta=0$, which is the homology class of the constant map. The moduli space
${\mathcal M}^{\rm RGW}(\theta,\theta;0;L_0,L_1)$ consists of a single element. Therefore, the quotient space
\[
  \overline{\mathcal M}^{\rm RGW}(a,b;\beta;L_0,L_1):={\mathcal M}^{\rm RGW}(a,b;\beta;L_0,L_1)/{\rm PU}(2)
\]
has an induced Kuranishi structure. Proposition \ref{prop7575} can be used to verify the following lemma:
\begin{lemma}\label{lem6767}
	For $a,b \in \mathcal A$, the boundary of $\overline{\mathcal M}^{\rm RGW}(a,b;\beta;L_0,L_1)$ 
	is the union of two types of spaces:
	\begin{enumerate}
	\item The direct product:
		\[
		  \overline{\mathcal M}^{\rm RGW}(a,c;\beta_1;L_0,L_1) \times 
		  \overline{\mathcal M}^{\rm RGW}(c,b;\beta_2;L_0,L_1)
		\]
		for $c \in \mathcal A$, $\beta_1 \in H(a,c)$ and $\beta_2 \in H(c,b)$ such that 
		$\beta_1 \#\beta_2 = \beta$.
	\item The quotient of the union of direct products 
		\[
		  {\mathcal M}^{\rm RGW}(a,\theta;\beta_1;L_0,L_1) \times 
		  {\mathcal M}^{\rm RGW}(\theta,b;\beta_2;L_0,L_1)
		\]
		by the diagonal ${\rm PU}(2)$ action. 
		Here the union is taken over $\beta_1 \in H(a,\theta)$ and $\beta_2 \in H(\theta,b)$ 
		with $\beta_1 \#\beta_2 = \beta$.
	\end{enumerate}
\end{lemma}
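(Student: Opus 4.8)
The plan is to deduce the description of $\partial\overline{\mathcal M}^{\rm RGW}(a,b;\beta;L_0,L_1)$ directly from Proposition \ref{prop7575}(iii) by quotienting the codimension-one boundary stratification of ${\mathcal M}^{\rm RGW}(a,b;\beta;L_0,L_1)$ by the ${\rm PU}(2)$-action, keeping careful track of which strata carry a free action and which do not. First I would recall that, by the discussion immediately preceding the lemma, the ${\rm PU}(2)$-action on ${\mathcal M}^{\rm RGW}(a,b;\beta;L_0,L_1)$ is free whenever $(a,b)\neq(\theta,\theta)$, and in particular for all $a,b\in\mathcal A$; hence the quotient inherits a Kuranishi structure with corners, and taking boundary commutes with taking the quotient, i.e.\ $\partial\bigl({\mathcal M}^{\rm RGW}/{\rm PU}(2)\bigr)=(\partial{\mathcal M}^{\rm RGW})/{\rm PU}(2)$. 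So it suffices to quotient each term in \eqref{form7575}.

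Next I would split the union in \eqref{form7575} according to whether the intermediate generator $c$ equals $\theta$ or lies in $\mathcal A$. If $c\in\mathcal A$, then both factors ${\mathcal M}^{\rm RGW}(a,c;\beta_1;L_0,L_1)$ and ${\mathcal M}^{\rm RGW}(c,b;\beta_2;L_0,L_1)$ carry free ${\rm PU}(2)$-actions (since neither pair of endpoints is $(\theta,\theta)$), and the fiber product over $R_c\cong{\rm PU}(2)$ is formed via the equivariant evaluation maps ${\rm ev}_{+\infty}$ and ${\rm ev}_{-\infty}$ of \eqref{form7474}. A standard fact about fiber products over a homogeneous space $G$ — namely that $(A\times_G B)/G\cong (A/G)\times(B/G)$ when $A\to G$, $B\to G$ are $G$-equivariant — then identifies the quotient of this stratum with the direct product $\overline{\mathcal M}^{\rm RGW}(a,c;\beta_1;L_0,L_1)\times\overline{\mathcal M}^{\rm RGW}(c,b;\beta_2;L_0,L_1)$, giving type (1). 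If instead $c=\theta$, then $R_\theta$ is a point, so the ``fiber product over $R_\theta$'' in \eqref{form7575} is simply the direct product ${\mathcal M}^{\rm RGW}(a,\theta;\beta_1;L_0,L_1)\times{\mathcal M}^{\rm RGW}(\theta,b;\beta_2;L_0,L_1)$; the residual ${\rm PU}(2)$-action on this product is the diagonal one (the quotient map ${\mathcal M}^{\rm RGW}(a,b;\beta)\to\overline{\mathcal M}^{\rm RGW}(a,b;\beta)$ is by the single diagonal copy of ${\rm PU}(2)$ acting on the whole broken configuration), yielding type (2). One should note that here one does \emph{not} first quotient the individual factors — the factor ${\mathcal M}^{\rm RGW}(a,\theta;\beta_1;L_0,L_1)$ with $\beta_1\neq 0$ still has a free action, but the factor with endpoints $(\theta,\theta)$ and $\beta=0$ is a point with trivial action, which is precisely why only the diagonal quotient of the product makes sense. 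Finally I would observe that for $a,b\in\mathcal A$ the constant-map stratum ${\mathcal M}^{\rm RGW}(\theta,\theta;0;L_0,L_1)$ cannot appear as a genuine intermediate factor contributing a new boundary component beyond what is listed, and that the mod-$8$ dimension count in \eqref{mod-8-1}–\eqref{mod-8-2} together with the remark following Proposition \ref{prop7575} confirms the two cases drop dimension by the expected amounts, so the listed strata exhaust the codimension-one boundary.

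I expect the main obstacle to be the bookkeeping around the $c=\theta$ stratum: one must verify carefully that the ${\rm PU}(2)$-action descending to the broken configuration ${\mathcal M}^{\rm RGW}(a,\theta;\beta_1)\times{\mathcal M}^{\rm RGW}(\theta,b;\beta_2)$ is the diagonal action and that the Kuranishi structure on the quotient is compatible with this, since unlike the $c\in\mathcal A$ case the action on the product is free only away from the locus where one factor is the constant map — but for $a,b\in\mathcal A$ this degenerate locus does not arise in the fiber product, so freeness is not actually lost and the quotient Kuranishi structure is well-defined. The identification $(A\times_G B)/G\cong(A/G)\times(B/G)$ and the compatibility of Kuranishi structures with such quotients are by now routine (cf.\ the equivariant constructions of Section \ref{sec:equivariant} and \cite{FuFu5}), so beyond this point the argument is formal.
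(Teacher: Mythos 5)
Your proof is correct and is exactly the derivation the paper intends when it points to Proposition~\ref{prop7575}: split the fiber products in \eqref{form7575} according to $c\in\mathcal A$ (where $R_c\cong{\rm PU}(2)$ and the identity $(A\times_G B)/G\cong(A/G)\times(B/G)$ for $G$-equivariant maps to $G$ gives type~(1)) versus $c=\theta$ (where $R_\theta$ is a point, so the fiber product is the ordinary product and the single diagonal ${\rm PU}(2)$-quotient gives type~(2)), using freeness of the ${\rm PU}(2)$-action for $a,b\in\mathcal A$ so that passing to the quotient commutes with taking the boundary. One aside is slightly off: in the $c=\theta$ case neither factor has both endpoints equal to $\theta$ --- they are ${\mathcal M}^{\rm RGW}(a,\theta;\beta_1)$ and ${\mathcal M}^{\rm RGW}(\theta,b;\beta_2)$ with $a,b\in\mathcal A$, on each of which the action is free regardless of $\beta_i$ by the paragraph preceding the lemma, so freeness of the diagonal action is immediate and no constant-map stratum intervenes --- but this does not affect the validity of the argument.
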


We pick a system of ${\rm PU}(2)$ invariant multi-sections over each moduli space ${\mathcal M}^{\rm RGW}(a,b;\beta;L_0,L_1)$ that is compatible with the description of the boundaries in \eqref{form7575}.
This is equivalent to choosing a system of 
multi-sections over various 
$\overline{\mathcal M}^{\rm RGW}(a,b;\beta;L_0,L_1)$
that is compatible with the 
description of the boundaries in Lemma \ref{lem6767}. In the case that $d(\beta)=0$ and $a,b \ne \theta$, Lemma  \ref{lem6767} and the compatibility of the multi-sections show that the zero set of the multisection 
in the moduli space $\overline{\mathcal M}^{\rm RGW}(a,b;\beta;L_0,L_1)$ is a compact 0-dimensional space. Therefore, we can count the number of points in this space (with signs) to define:
\begin{equation}\label{numberRGWkado}
	\# \overline{\mathcal M}^{\rm RGW}(a,b;\beta;L_0,L_1).
\end{equation}

Now we are ready to define another version of symplectic instanton Floer homology for integral homology spheres. Define:
\begin{equation} \label{symp-chain-cx-1}
  C^*_{symp}(M):= \bigoplus_{a \in \mathcal A} \Q [a]
\end{equation}
and 
\begin{equation}\label{symp-chain-cx-2}
  \partial [a]:=\sum_{b \in \mathcal A, \beta\in H(a,b)}\left( \# \overline{\mathcal M}^{\rm RGW}(a,b;\beta;L_0,L_1)\right) [b].
\end{equation}
where the sum is over all $b \in \mathcal A$ and $\beta\in H(a,b)$ that $d(\beta)=0$. Another application of Lemma \ref{lem6767} and the compatibility of the multi-sections show that $\partial^2=0$. To be a bit more detailed, the terms in $\partial^2(a)$, for a non-trivial flat connection $a$, are in correspondence with the boundary points of the 1-dimensional moduli spaces $\overline{\mathcal M}^{\rm RGW}(a,b;\beta;L_0,L_1)$ which are of type (1) in Lemma \ref{numberRGWkado} For a 1-dimensional moduli space, the space of boundary points of type (2) is empty, because each component of the space of type (2) boundary points has dimension at least 3.

The homology of the chain complex in \eqref{symp-chain-cx-1} and \eqref{symp-chain-cx-2} is denoted by $I^*_{symp}(M)$. Note that our definition of $I^*_{symp}(M)$ resembles Floer's instanton homology $I^*(M)$ in the sense that the trivial connection $\theta$ doss not enter into the definition of the corresponding chain complex. The following is the analogue of Conjecture \ref{symp-equiv}. The same comment as in Remark \ref{inv} applies to this conjecture.
\begin{conjecture} \label{symp-equiv-2}
	The group $I^*_{symp}(M)$ is an invariant of the integral homology sphere $M$.
	That is to say, the homology of the chain complex $(C^*_{symp}(M),\partial)$
	is independent of the choices of the Heegaard splitting.
\end{conjecture}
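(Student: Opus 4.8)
The plan is to mimic the standard proof that Floer-type homologies are independent of auxiliary choices, working in the ${\rm PU}(2)$-equivariant divisor-complement setting. Concretely, let $H^0_g\cup_{\Sigma_g}H^1_g$ and $(H')^0_{g'}\cup_{\Sigma_{g'}}(H')^1_{g'}$ be two Heegaard splittings of $M$, giving Lagrangians $L_0,L_1\subset {\widehat R}(\Sigma,p,\theta)\setminus D$ and $L_0',L_1'\subset {\widehat R}(\Sigma',p',\theta)\setminus D'$, with chain complexes $(C^*_{symp}(M),\partial)$ and $((C')^*_{symp}(M),\partial')$. First I would recall that any two Heegaard splittings of $M$ are related by a finite sequence of stabilizations and isotopies (the Reidemeister--Singer theorem), so it suffices to treat each elementary move separately. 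An isotopy move is essentially immediate: it induces a Hamiltonian isotopy of the pair of Lagrangians in the divisor complement, and combining Lemma \ref{lem7474} with the invariance statement under Hamiltonian isotopies relative to $D$ (Theorem \ref{mainthm}(ii), suitably upgraded to the ${\rm PU}(2)$-equivariant and clean-intersection setting as in Proposition \ref{prop7575}) gives a chain homotopy equivalence. The content is therefore the stabilization move, which changes $\Sigma_g$ to $\Sigma_{g+1}$ and correspondingly changes the ambient manifold ${\widehat R}(\Sigma,p,\theta)$ to ${\widehat R}(\Sigma',p',\theta)$.

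For the stabilization move, the strategy is to produce an explicit isomorphism of extended moduli spaces. A genus-$1$ stabilization amounts to taking a connected sum of $\Sigma$ with a torus $T^2$ and replacing each handlebody $H^i_g$ by $H^i_g\natural$(solid torus). I would use the behaviour of the extended moduli space under such a connect-sum: ${\widehat R}(\Sigma\# T^2,p,\theta)$ should be symplectomorphic (${\rm PU}(2)$-equivariantly, and respecting the divisors $D$) to a fibered product of ${\widehat R}(\Sigma,p,\theta)$ with the extended moduli space of the torus, and the latter contributes a single new ``cancelling pair'' to the Lagrangian intersection — precisely the local model in which the trivial-connection component and one ${\rm PU}(2)$-orbit get born together. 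The key identity to establish is that, under this symplectomorphism, $\tilde R((H')^i_{g+1},p')$ corresponds to $\tilde R(H^i_g,p)$ times a standard Lagrangian factor for the solid torus, so that after the holonomy perturbation of Lemma \ref{lem7474} the new intersection points are a single extra ${\rm PU}(2)$-orbit $R_{a_0}$ with exactly one ``instanton'' connecting it to the old generators, with $d(\beta)=1$ after quotient, giving an acyclic summand. This is the equivariant analogue of what is carried out in \cite{MW} for the non-equivariant version, and the ${\rm PU}(2)$-equivariant bookkeeping is governed by Lemma \ref{lem6767}.

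Having set up the comparison, the homological algebra is routine: the chain complex $(C')^*_{symp}(M)$ decomposes (up to chain homotopy equivalence, via a change of basis/filtration argument on the new generators) as the mapping cone of an isomorphism added to $(C^*_{symp}(M),\partial)$, hence has the same homology. One must check that the decomposition of codimension-one boundary strata in Lemma \ref{lem6767} is compatible with this splitting — in particular that the type-(2) boundary strata (those passing through the trivial connection $\theta$), which have virtual dimension $\geq 3$ and so do not affect $d(\beta)\leq 1$ moduli spaces, continue to play no role — and that the system of ${\rm PU}(2)$-invariant multisections can be chosen compatibly on both sides of the stabilization (a standard ``extension of perturbation data'' argument over a parametrized moduli space).

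The main obstacle, I expect, is establishing the precise ${\rm PU}(2)$-equivariant symplectomorphism of cut-down extended moduli spaces under connected sum, compatibly with the divisors $D$, $D'$ and with the handlebody Lagrangians — in other words, making the ``local model for stabilization'' rigorous in the non-degenerate, divisor-complement setting rather than on the singular space $R(\Sigma)$. The gluing-and-cutting description of ${\widehat R}(\Sigma\#\Sigma',p,\theta)$ in terms of the factors involves a fibered product along the moment maps $\mu,\mu'$, and one has to control how the symplectic cut and the K\"ahler structure near $D$ behave under this operation; this is where the bulk of the technical work lies, and it is plausibly the reason the authors defer the proof. A secondary difficulty is arranging the perturbation data of Lemma \ref{lem7474} to restrict correctly near the stabilization region so that the new generators are genuinely a single cancelling ${\rm PU}(2)$-orbit and no spurious ${\mathcal M}^{\rm RGW}$ trajectories through $D$ appear; this should follow from the relative Gromov--Witten compactification used in Theorem \ref{mainthm}, but checking it in the equivariant case requires care.
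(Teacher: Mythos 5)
This statement is a \emph{conjecture} in the paper, and the paper gives no proof of it; your task here is therefore to compare strategies rather than proofs. The authors' own proposed route is recorded in Remark~\ref{inv} (which they say applies verbatim to Conjecture~\ref{symp-equiv-2}): mimic the invariance argument of Manolescu--Woodward \cite{MW}, which runs through quilted Floer homology of Lagrangian correspondences between pairs $(X_1,D_1)$ and $(X_2,D_2)$. That forces one to study holomorphic curves in $X_1\times X_2$ relative to the \emph{normal crossing} divisor $(D_1\times X_2)\cup(X_1\times D_2)$, and the paper is explicit that this is the bottleneck: the divisor-complement machinery of Section~\ref{sec:divisor complement} has only been developed for smooth divisors (the normal crossing version is Conjecture~\ref{cong59}, work in progress), and the only suggested way around it is the alternative product compactification $\mathcal M'$ of \cite[Section 12]{FuFu50}, which handles sphere bubbles in the two factors separately.

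Your route is genuinely different: Reidemeister--Singer reduction to isotopy and stabilization, then a hands-on local model for a single genus-$1$ stabilization. Two remarks. First, your claim that ``this is the equivariant analogue of what is carried out in \cite{MW} for the non-equivariant version'' misreads \cite{MW}: they do \emph{not} prove invariance by a local stabilization model, but precisely by the Lagrangian-correspondence/quilt functoriality that the present paper singles out as the hard part to replicate. Second, your description of the stabilization local model is suspect. The trivial connection component $\{\theta\}$ is present in $L_0\cap L_1$ for every Heegaard splitting (see \eqref{decompLL}), and a genus-$1$ stabilization by a pair of solid tori with contractible cores does not produce new irreducible flat connections on $M$ — it should not create a new ${\rm PU}(2)$-orbit in the intersection at all after the holonomy perturbation of Lemma~\ref{lem7474}, so a ``cancelling pair born with $\theta$'' is not the right picture. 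The actual content of a direct stabilization argument would be an equivariant, divisor-compatible identification of the cut-down extended moduli spaces and handlebody Lagrangians under connected sum with $T^2$; you correctly flag this as the main technical obstacle, and it sidesteps the normal crossing issue only locally — for the full Reidemeister--Singer comparison of two arbitrary splittings you would still need to compare splittings of the \emph{same} genus related by an arbitrary element of the mapping class group, at which point either a quilt-type functoriality or a very careful chain of Hamiltonian isotopies reappears.
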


\begin{remark}
	In the course of defining $I^*_{symp}(M)$, we only need the moduli spaces of virtual dimension $1$ or $0$.
	Therefore, we do not need to prove the smoothness of the coordinate change maps 
	of our Kuranishi structure.
	We also do not need to study triangulation of the zero set 
	of our multi-sections. For example, we can discuss in the same way as in
	\cite[Section 14]{foootech2}.
\end{remark}

The first part of the following conjecture can be regarded as a rigorous formulation of the original version of the Atiyah-Floer conjecture for integral homology spheres. In Section \ref{modulispace}, we sketch a plan for the proof of the first part of Conjecture \ref{conj7878}.
\begin{conjecture}\label{conj7878}
	For any integral homology sphere $M$, the vector spaces 
	$I^*(M)$ and $I^*_{symp}(M)$ are isomorphic to each other. 
	The $H^*_{{\rm PU}(2)}$-modules $\overline I^*(M)$ and $\overline I_{symp}^*(M)$ 
	are also isomorphic to each other.
\end{conjecture}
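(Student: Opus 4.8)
The plan is to prove Conjecture \ref{conj7878} by a neck-stretching / adiabatic-limit argument, in the spirit of the analysis of the mapping-torus case by Dostoglou and Salamon but carried out inside the \emph{smooth} symplectic manifold $X = \widehat R(\Sigma,p,\theta)$ of Theorem \ref{thm3434} together with its divisor $D = \overline\mu^{-1}(\theta)/S^1$, rather than in the singular space $R(\Sigma_g)$. Fix the Heegaard splitting \eqref{form73}; for a parameter $T \gg 0$ equip $M$ with a metric $g_T$ having a long cylindrical neck $\Sigma_g \times [-T,T]$, write $M_T$ for the resulting Riemannian $3$-manifold, and consider the (holonomy-perturbed) ASD equation on $\R \times M_T$. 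The aim is to show that for $T$ sufficiently large there is a bijection, compatible with the differentials, between the moduli spaces $\mathcal M(a,b;E;M)$ defining $\partial$ on Floer's complex $C_*(M)$ and the quotient moduli spaces $\overline{\mathcal M}^{\rm RGW}(a,b;\beta;L_0,L_1)$ defining $\partial$ on $C^*_{symp}(M)$, together with a matching of the generators of the two complexes. This directly yields an isomorphism $C_*(M) \cong C^*_{symp}(M)$, hence $I^*(M) \cong I^*_{symp}(M)$, and a ${\rm PU}(2)$-equivariant enhancement gives the corresponding isomorphism of $H^*_{{\rm PU}(2)}$-modules $\overline I^*(M) \cong \overline I^*_{symp}(M)$.

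The steps, in order: \emph{(1) Generators.} Restriction to $\Sigma_g$ identifies the non-trivial flat $\SU(2)$-connections on $M$ with the points of $\tilde R(H^0_g,p) \cap \tilde R(H^1_g,p)$, and with the trivial connection added one recovers all flat connections; after the holonomy perturbations of Lemma \ref{lem7474} --- which are chosen to correspond on the symplectic side to the Hamiltonian perturbations producing $L_0,L_1$ --- the perturbed generators match the decomposition \eqref{decompLL}, with $\theta$ the isolated point $R_\theta$ and each irreducible family a copy $R_a \cong {\rm PU}(2)$. \emph{(2) Adiabatic compactness.} As $T\to\infty$, a sequence of finite-energy (perturbed) ASD connections on $\R \times M_T$ asymptotic to $a$ and $b$ converges, modulo gauge, to a holomorphic strip $u \co \R\times[0,1]\to X\setminus D$ with $u(\cdot,0)\in L_0$, $u(\cdot,1)\in L_1$; the Yang--Mills energy in the neck passes to the $\omega$-energy of $u$. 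One must show the limit strip lies in $X\setminus D$, i.e. instanton families whose restriction to $\Sigma_g$ approaches the cut locus occur only in higher codimension, so that the correct compactification of the instanton moduli is precisely the relative Gromov--Witten compactification $\mathcal M^{\rm RGW}$ of Theorem \ref{mainthm} and Proposition \ref{prop7575}, not the stable-map compactification inside $X$. \emph{(3) Gluing.} Conversely, each element of $\overline{\mathcal M}^{\rm RGW}(a,b;\beta;L_0,L_1)$ is glued, for $T$ large, to a unique nearby (perturbed) ASD solution on $\R \times M_T$ by a Newton iteration with $T$-uniform elliptic estimates along the neck. \emph{(4) Transversality and signs.} Choosing the holonomy perturbations and the ${\rm PU}(2)$-invariant multisections of Proposition \ref{prop7575} compatibly, the relevant Kuranishi structures are transverse and the gluing map preserves orientations; combined with (2)--(3) this gives, for $T\gg0$ and in virtual dimension $0$, the equality $\#\mathcal M(a,b;E;M) = \#\overline{\mathcal M}^{\rm RGW}(a,b;\beta;L_0,L_1)$, while the one-dimensional moduli spaces identify the two boundary maps. \emph{(5) Equivariance.} Running the whole construction ${\rm PU}(2)$-equivariantly --- on the gauge side ${\rm PU}(2)$ acting through bundle automorphisms fixing a frame at a chosen point, matching the residual ${\rm PU}(2)$-action on $X$ --- produces a ${\rm PU}(2)$-equivariant homeomorphism of Kuranishi spaces; feeding this into the Borel construction of Section \ref{sec:equivariant} and comparing with Donaldson's $\overline I^*(M)$ gives the $H^*_{{\rm PU}(2)}$-module isomorphism, the trivial connection being matched with the single point $\mathcal M^{\rm RGW}(\theta,\theta;0;L_0,L_1)$ and the type (2) boundary pieces of Lemma \ref{lem6767} with bubbling of the trivial connection on the instanton side.

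The main obstacle is Step (2): the adiabatic-limit analysis in this singular/divisorial setting. Concretely one must (a) control instanton bubbling both in the neck and in the handlebodies and exclude, or account for, energy loss, uniformly in $T$; (b) prove that the compactified instanton moduli space is homeomorphic \emph{together with its Kuranishi structure} to $\mathcal M^{\rm RGW}$, i.e. that no instanton family limits onto $D$ --- this is the gauge-theoretic shadow of the separation of type (A) and type (B) limits in the proof of Theorem \ref{mainthm}; and (c) carry all of this through ${\rm PU}(2)$-equivariantly and consistently with orientations, including the trivial connection, whose stabiliser is all of ${\rm PU}(2)$. A further difficulty, shared with Conjectures \ref{symp-equiv} and \ref{symp-equiv-2}, is that the relative Gromov--Witten package of Theorem \ref{mainthm} must be set up in families (over the neck length $T$ and over paths of auxiliary data) with compatible Kuranishi structures, so that the techniques of \cite{DF} and \cite{MW} have to be combined with the neck-stretching analysis with considerable care.
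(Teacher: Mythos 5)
This statement is a conjecture; the paper does not prove it but sketches a program for the first half in Section~\ref{modulispace}, and the route you propose is genuinely different from the one the paper advocates. The paper's plan is to build a chain map $\Phi\colon C^*_{symp}(M)\to C^*(M)$ by counting solutions of a \emph{mixed equation} on a domain that is half a holomorphic strip in $X\setminus D$ (over $W_-$) and half an ASD connection on the 4-manifold $Y_+$, matched along a borderline $C$ via the moment-map zero set and the identification $\widehat\mu^{-1}(0)/{\rm PU}(2)\cong R(\Sigma)$ (Definitions~\ref{defn71}--\ref{defn72}); one then argues $\Phi$ is an isomorphism by a filtration argument (the energy-zero term is the identity), in the spirit of Lekili--Lipyanskiy and \cite{fu3,Li,DFL}. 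Your plan is instead the Dostoglou--Salamon / Salamon--Wehrheim / Duncan adiabatic-limit strategy: stretch a cylinder $\Sigma_g\times[-T,T]$, degenerate the ASD equation into a holomorphic-strip equation, and produce a bijection of moduli spaces for $T\gg 0$. The paper explicitly contrasts its own approach with the adiabatic one in the final remark of Section~\ref{modulispace}, arguing that the mixed-equation route ``has less analytical difficulties because it uses the functorial properties of Floer homologies,'' whereas the adiabatic route gives finer moduli-space information at the cost of harder analysis.

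Beyond being a different route, your Step~(2) as written has a genuine gap specific to this setting. When the $\Sigma_g$ factor degenerates, the adiabatic limit of ASD connections on $\R\times\Sigma_g\times[-T,T]$ produces holomorphic curves in the \emph{singular symplectic quotient} $R(\Sigma_g)=\widehat\mu^{-1}(0)/{\rm PU}(2)$, not in the cut-down extended moduli space $X=\widehat R(\Sigma,p,\theta)$ in which $L_0,L_1$ live and in which the moduli spaces $\overline{\mathcal M}^{\rm RGW}$ defining $I^*_{symp}(M)$ are taken. A holomorphic strip in $X\setminus D$ need not map into $\widehat\mu^{-1}(0)$, so there is no reason for the adiabatic limit to land bijectively on the $\overline{\mathcal M}^{\rm RGW}$ moduli spaces; you would instead obtain data in the Floer theory of the singular quotient, which is exactly the object this paper is trying to avoid. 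Bridging this gap requires a quantum-Kirwan / symplectic-quotient comparison of the kind that appears later as Theorem~\ref{quotthem} and Conjecture~\ref{cong107}, or an adiabatic analysis done relative to the ${\rm PU}(2)$-action so that energy escaping the moment-map level set is controlled --- neither of which your sketch supplies. This is precisely why the paper's mixed equation imposes the matching conditions (3.1)--(3.3) of Definition~\ref{defn71} only along the borderline $C$ and keeps the holomorphic half valued in $X\setminus D$: it sidesteps having to interpret the degenerate limit inside the singular quotient. Your Step~(1) also quietly assumes that holonomy perturbations on the gauge side match Hamiltonian perturbations of $L_i$ in $X$ on the symplectic side at the level of moduli spaces and not just generators; for the adiabatic approach this needs a uniform-in-$T$ statement that is itself nontrivial. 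None of this makes your route impossible in principle --- it is essentially the Salamon--Wehrheim--Duncan program transposed to the extended moduli space --- but the quotient-versus-lift discrepancy is a missing step, not merely a technical footnote, and it is the central reason the paper pursues a different mechanism.
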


\begin{remark} \label{MW-AF}
	One can forget the ${\rm PU}(2)$-action on ${\mathcal M}^{\rm RGW}(a,b;\beta;L_0,L_1)$ and 
	apply the construction of the previous section to define (non-equivariant) Lagrangian Floer homology 
	for the Lagrangians $L_0$ and $L_1$ in the complement of $D$. The resulting Floer homology is essentially 
	the same 3-manifold invariant as the version of symplectic instanton Floer homology that is constructed in \cite{MW}.
	There is also an analogue of the Atiyah-Floer conjecture for this invariant.
	It is conjectured in \cite{MW} that this invariant is isomorphic to an alternative version of instanton Floer homology,
	defined in \cite{Don:YM-Floer} and denoted by $\widetilde {HF}(M)$.
\end{remark}

\section{Atiyah-Floer Conjecture and Moduli Space of Solutions to the Mixed Equation}
\label{modulispace}

In this section, we propose a program to prove Conjecture \ref{conj7878}. The main geometrical input in this program is a moduli space which is a mixture of the moduli space of Anti-Self-Dual connections and pseudo-holomorphic curves. Here we describe the version introduced in \cite{Li}. Similar moduli spaces appeared in \cite{fu4}. Analogous mixed moduli spaces are also being used by Max Lipyanskiy and the authors to prove an $\SO(3)$-analogue of the Atiyah-Floer conjecture \cite{DFL}. 

Suppose $M$ is an integral homology sphere and a Heegaard splitting as in \eqref{form73} is fixed for $M$. Therefore, we can form the symplectic manifold $X={\widehat R}(\Sigma,p,\theta)$ and the Lagrangian submanifolds $L_i=\tilde R(H^i_g,p)$. For the simplicity of exposition, we assume  that $L_0$ and $L_1$ have clean intersection. Recall that Lemma \ref{lem7474} states that in general we can perturb these Lagrangians by Hamiltonian isotopies to ensure that this assumption holds.


Let the domain $W$ in the complex plane $\C$ be given as in Figure \ref{zu1}. 
\begin{figure}[h]
\centering
\includegraphics[scale=0.3]{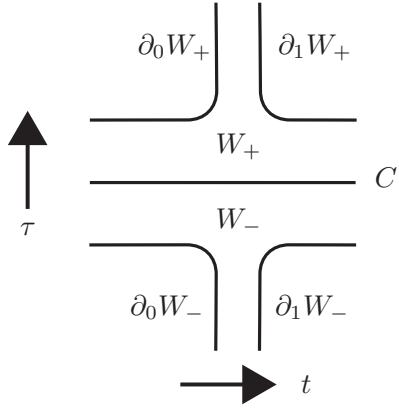}
\caption{The domain $W$}
\label{zu1}
\end{figure}
We also decompose this domain into two parts $W_-$ and $W_+$ as in the figure and let $C = W_- \cap W_+$. Using the coordinate $t,\tau$ in the figure, the line $C$ is the part $\tau = 0$. The domain $W$ has four boundary components, denoted by $\partial_0W_-$, $\partial_1W_-$, $\partial_0W_+$, $\partial_1W_+$, and four ends as below:
\begin{equation}
\aligned
&\{(t,\tau) \mid \tau \in [-1,1], t < -K_0\},
\quad
\{(t,\tau) \mid \tau \in [-1,1], t > K_0\},
\\
&\{(t,\tau) \mid t \in [-1,1], \tau < -K_0\},
\quad
\{(t,\tau) \mid t \in [-1,1], \tau > K_0\}.
\endaligned
\end{equation}
We fix a Riemannian metric $g_W$ on $W$ which coincides with the standard Riemannian metric on the complex plane where $\vert t\vert$ or $\vert \tau\vert$ is large and outside a small neighborhood of $\partial_0W_+ \cup \partial_1W_+$. We also require that the metric is isometric to $(-\epsilon,0] \times \R$ on a small neighborhood of $\partial_0W_{+}$, $\partial_1W_{+}$.

Fix a product metric on the product 4-manifold $W_+ \times \Sigma_g$. We glue $H_g^0 \times \R$ and $H_g^1  \times \R$ to the boundary components $\Sigma_g \times \partial_0W_+$
and $\Sigma_g \times \partial_1W_+$ of $W_+ \times \Sigma_g$, respectively. We will denote the resulting 4-manifold with $Y_+$ (cf. Figure \ref{zu2}).
\begin{figure}[h]
\centering
\includegraphics[scale=0.3]{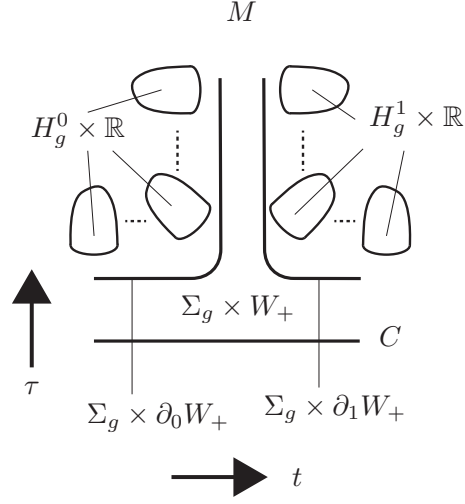}
\caption{The 4-dimensional manifold $Y_+$}
\label{zu2}
\end{figure}
The manifold $Y_+$ has three ends and one boundary component which is $\Sigma_g \times C$. The three ends correspond to the part $t \to \pm \infty$ and $\tau \to + \infty$, and they can be identified with:
\begin{equation}\label{form7171}
	H^0_g \times (-\infty,-K_0),\quad H^1_g \times (K_0,+\infty),\quad (H^0_g \cup_{\Sigma_g} H^1_g) \times (K_0,+\infty).
\end{equation}
We extend the product Riemannian metric on $\Sigma_g \times W_+$ to $Y_+$ so that the ends in \eqref{form7171} have the product Riemannian metric. Note that $H^0_g \cup_{\Sigma_g} H^1_g$ in \eqref{form7171} is the integral homology sphere $M$.

Consider the decomposition:
\[
  L_0 \cap L_1 = \{\theta\} \cup \bigcup_{a \in \mathcal A} R_a
\]
as in \eqref{decompLL}. The set $\mathcal A$ is identified with the set of irreducible flat connections on the trivial $SU(2)$-bundle over $M$.
\begin{definition}\label{defn71}
	Let $a,b \in \mathcal A$. We say the pair $(u,A)$ satisfies the {\it mixed equation}, if they satisfy the following properties.
	The first two conditions are constraints on the map $u$:
	\begin{enumerate}	
            	\item[(1.1)] $u: W_- \to X \setminus D$ is a holomorphic map with finite energy. 
            	Here $X$ and $D$ are given in \eqref{symp-div}, and the energy of $u$ is defined to be:
            	\[
            	  \int_{W_0} u^* \omega
            	\]
            	with $\omega$ being the symplectic form of $X$.
            	\item[(1.2)] The map $u$ satisfies the boundary conditions $u(\partial_0W_-)\subset L_0$ and 
            	$u(\partial_1W_-)\subset L_1$. Moreover, we require that for $t \in [-1,1]$, we have:
            	\[
            	  \lim_{\tau\to -\infty} u(\tau,t) = \frak p \in R_a.
            	\]
            	Here $\frak p$ is an element of $R_a$ which is independent of $t$.
	\end{enumerate}
	The next two conditions are on the connection $A$:
	\begin{enumerate}
            	\item[(2.1)] $A$ is a connection on the trivial $\SU(2)$-bundle over $Y_+$ which satisfies the anti-self-duality equation
            	\begin{equation*}
            		F^+(A)=0 
            	\end{equation*}
            	and its energy given by:
            	\[
            	 \int_{Y_+} \vert F_A\vert^2 \, dvol
            	\]
            	is finite.
            	\item[(2.2)]
            	For $\tau>K_0$, let $A_\tau$ denote the restriction of $A$ to $(H^0_g \cup_{\Sigma_g} H^+_g) \times \{\tau\} \cong M $.
            	The connection $A_{\tau}$ on $M$ converges to the flat connection $b$ as $\tau$ goes to $+\infty$.
	\end{enumerate}
	The last three conditions are matching conditions for $u$ and $A$ on the borderline $C$:
	\begin{enumerate}
	\item[(3.1)]
            	If $(t,0) \in C$, then $u(t,0) \in \widehat \mu^{-1}(0)$, where $\widehat \mu:X \to \frak{su}(2)$ is the moment map of
            	Theorem \ref{thm3434}. 
            	\item[(3.2)]
            	The restriction of $A$ to $\Sigma_g \times \{(t,0)\} \subset \partial X_+$, denoted by $A_{(t,0)}$, is flat for any $(t,0) \in C$.
            	\item[(3.3)]
            	The gauge equivalence class of the flat connection $A_{(t,0)}$ coincides with the equivalence class $[u(t,0)]$ 
            	of $u(t,0)$ in ${\widehat \mu}^{-1}(0)/{\rm PU}(2) = R(\Sigma)$. (See Theorem \ref{thm3434}.)
	\end{enumerate}
\end{definition}
\begin{definition}\label{defn72}
	Suppose $(u,A)$, $(u',A')$ are two pairs that satisfy the mixed equation. These two elements are equivalent, if 
	there exists a gauge transformation $g$ on $Y_+$ and $h \in {\rm PU}(2)$ such that:
	\[
	A' = g_*A\qquad u' = h u.
	\]
	We will write $\overset{\circ}{\mathcal M}(W_-,Y_+,L_0,L_1;a,b;E)$ for the space of equivalence classes of the pairs 
	$(u,A)$ satisfying the mixed equation and the following energy constraint:
	\[
	  E=\int_{W_0} u^* \omega + \int_{Y_+} \Vert F_A\Vert^2.
	\]
\end{definition}

We wish to show that the moduli space $\overset{\circ}{\mathcal M}(W_-,Y_+,L_0,L_1;a,b;E)$ behaves nicely and it can be compactified in a way that we can use it to construct an isomorphism between $I^*(M)$ and $I_{symp}^*(M)$. This requires us to generalize the analytical results of \cite{Li, DFL}. The matching condition in Definition \ref{defn72} can be regarded as a Lagrangian boundary condition associated to a Lagrangian correspondence from the infinite dimensional space of $\SU(2)$ connections over $\Sigma$ to $X$. A similar infinite dimensional Lagrangian correspondence appears in \cite{Li, DFL}. However, the Lagrangian correspondence in the present context is singular. Therefore, proving the required analytical results for the moduli space $\overset{\circ}{\mathcal M}(W_-,Y_+,L_0,L_1;a,b;E)$ (such as Fredholm theory, regularity and compactness) seems to be more challenging. Nevertheless, we conjecture that this moduli space satisfies these properties and it can be compactified to a space ${\mathcal M}(W_-,Y_+,L_0,L_1;a,b;E)$.


This compactification ${\mathcal M}(W_-,Y_+,L_0,L_1;a,b;E)$ is expected to have a virtual fundamental chain whose boundary is the union of the following two types of spaces. The first type is:
\begin{equation}\label{boundary73}
	\overline{\mathcal M}^{\rm RGW}(a,c;\beta;L_0,L_1)\times {\mathcal M}(W_-,Y_+,L_0,L_1;c,b;E_2)
\end{equation}
with $c \in \mathcal A$ and $\omega[\beta]+E_2 = E$, and the second type is:
\begin{equation}\label{boundary74}
{\mathcal M}(W_-,Y_+,L_0,L_1;a,c;E_1)
\times
\mathcal M(c,b;E_2;M)
\end{equation}
with $c \in \mathcal A$ and $E_1 + E_2 = E$. Here $\overline{\mathcal M}^{\rm RGW}(a,c;\beta;L_0,L_1)$ and $\mathcal M(c,b;E_2;M)$ are the moduli spaces that appeared in Section \ref{Sec:AFori}. 

Assuming the existence of compactification ${\mathcal M}(W_-,Y_+,L_0,L_1;a,b;E)$ with the above properties, we define a map $\Phi:C^*_{symp}(M) \to C^*(M)$ as follows:
\[
  \Phi(a) = \sum_{b,E}\#{\mathcal M}(W_-,Y_+,L_0,L_1;a,b;E)[b].
\]
where the sum is over all choices of $E$ and $b$ such that ${\mathcal M}(W_-,Y_+,L_0,L_1;a,b;E)$ is 0-dimensional. The signed number of points in this 0-dimensional moduli space is denoted by $\#{\mathcal M}(W_-,Y_+,L_0,L_1;a,b;E)$.
By a standard argument applying to the 1-dimensional moduli spaces ${\mathcal M}(W_-,Y_+,L_0,L_1;a,b;E)$ and using the description of the boundary of this moduli space in \eqref{boundary73}, \eqref{boundary74}, we can conclude this implies that $\Phi$ is a chain map. The energy $0$ part of the moduli space ${\mathcal M}(W_-,Y_+,L_0,L_1;a,b;0)$ is empty if $a \ne b$ and has one point if $a=b$. It implies that $\Phi$ induces an isomorphism between corresponding Floer homologies.

\begin{remark}
	Note that in Definition \ref{defn71}, we do not assume any particular asymptotic boundary conditions on the ends where $t \to \pm\infty$.
	In fact, the finiteness of the energy should imply that the pair $(u,A)$ converges to a constant map to a flat connection on $H_g^i$ as $t \to \pm\infty$.
	Therefore, the choices of Definition \ref{defn71} imply asymptotic convergence to the fundamental class of ${\tilde R(H^0_g,p)}$ on these ends.
	This particular choice of the asymptotic boundary condition at $t \to \pm\infty$ is very important to show that $\Phi$ induces an isomorphism in homology. 
	In fact, we use it to show that the contribution of the lowest energy part to $\Phi$ is the identity map.
\end{remark}

\begin{remark}
	The map $\Phi$ is defined similar to some chain maps which appear in \cite{fu3}. In the definition of these chain maps the Lagrangian ${\tilde R(H^0_g,p)}$ is replaced with arbitrary Lagrangian submanifold of 
	the underlying symplectic manifold. However, the idea that such maps can be used to construct isomorphisms is inspired by Lekili and  Lipyanskiy's work in \cite{LL}, where the methods of \cite{fu3} is revived
	 in a similar context.
\end{remark}

\begin{remark}
	The special case of the $\SO(3)$-Atiyah-Floer conjecture for mapping tori of surface diffeomorphisms was proved
	in the seminal work of Dostoglou and Salamon \cite{DS}. Their proof uses an adiabatic limit argument and is based on the following crucial observation.
	Consider the 4-manifold $\Sigma_g \times W$, where $W$ is a surface, and let the metric on $\Sigma_g$ degenerate.
	Then the ASD equation turns into the holomorphic curve equation 
	from $W$ to the space $R(\Sigma_g)$ of flat connections on $\Sigma_g$.
	Later, Salamon proposed a program for the original version of the Atiyah-Floer conjecture using similar adiabatic limit argument \cite{Sa} and this 
	approach was pursued further by Salamon and Wehrheim \cite{Sawe, We, We2}. 
	The extension of the adiabatic limit argument to the general case of the $\SO(3)$-analogue of the Atiyah-Floer conjecture is also being investigated 
	by David Duncan \cite{Duncan}. 
	The adiabatic limit argument has the potential advantage of finding a relationship between the moduli spaces involved in gauge theory and symplectic geometry, and not only a relationship at the level of Floer homologies.
	The drawback is one has to face complicated analytical arguments.
	We believe the approach discussed in this section (and the corresponding one in \cite{DFL} for the 
	$\SO(3)$-analogue of the Atiyah-Floer conjecture) has less analytical difficulties because it uses the functorial properties 
	of Floer homologies. A similar phenomenon appears in the proof of the connected sum theorem for instanton Floer homology 
	of integral homology spheres where the ``functorial'' approach \cite{fu15,Don:YM-Floer} seems to be easier than the adiabatic limit argument \cite{WL}.
\end{remark}

\section{Yang-Mills Gauge Theory and 3-Manifolds with Boundary}
\label{Sec:conjectures}


In Section \ref{Sec:AFori}, we sketched the construction of $\overline I_{symp}(M)$, as a module over $H^*_{{\rm PU}(2)}$, for an integral homology sphere $M$. This invariant is defined by considering Yang-Mills gauge theory on principal $\SU(2)$-bundles. It is natural to ask to what extent this construction can be generalized to arbitrary 3-manifolds and arbitrary choice of principal bundles. In the following conjecture, suppose $G$ is given as in Section \ref{sec:extendedmoduli}. 
\begin{conjecture}
	Suppose $\mathcal E$ is a $G$-bundle over a 3-manifold $M$. Then there is a $H^*_{G}$-module $\overline I^*_{symp}(M,\mathcal E)$
	which is an invariant of the pair $(M,\mathcal E)$. In the case that $M$ is an integral homology sphere and $G={\rm PU}(2)$, 
	this invariant matches with the construction of Section \ref{Sec:AFori}.
\end{conjecture}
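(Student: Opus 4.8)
The plan is to repeat the construction of Section \ref{Sec:AFori} with $\SU(2)$ replaced by $\widetilde G$ and the trivial bundle replaced by $\mathcal E$, and then to prove independence of the auxiliary choices. Fix a Heegaard splitting $M = H^0_g \cup_{\Sigma_g} H^1_g$ and let $\mathcal F$ denote the induced $G$-bundle data on $\Sigma_g$ together with the two handlebody extensions $\mathcal E|_{H^i_g}$; the way the topological type of $\mathcal E$ is encoded by this Heegaard data (through the clutching of the splitting) is to be spelled out in Section \ref{sec:generalgroup}. Starting from the extended moduli space $\widehat R(\Sigma_g,\mathcal F,p)$ of Proposition \ref{symp-ext} and the non-abelian symplectic cut reviewed there, one produces, for a regular value $\theta$ in the fundamental alcove sufficiently close to $0$, a closed symplectic manifold $X = \widehat R(\Sigma_g,\mathcal F,p,\theta)$ with a Hamiltonian $G$-action and moment map $\widehat\mu$, together with a $G$-invariant smooth divisor $D\subset X$ sitting inside a K\"ahler neighbourhood identified with a moduli space of parabolic bundles, exactly as in Theorem \ref{thm3434}; for $\theta$ farther from $0$ one would instead have to let $D$ be normal crossing, with components indexed by the alcove walls crossed along the way. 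The handlebodies give $G$-invariant Lagrangians $L_i = \tilde R(H^i_g,\mathcal E|_{H^i_g},p)\subset X\setminus D$ as in Proposition \ref{handle-body-lag}, and holonomy perturbations put $L_0$ and $L_1$ in clean position with each component a single $G$-orbit or a reducible point, as in Lemma \ref{lem7474}.

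With $X$, $D$, $L_0$, $L_1$ in hand I would verify the input of the Floer machinery: that the $L_i$ are spin, monotone in $X\setminus D$ in the sense of Theorem \ref{mainthm}, and satisfy (m.a) or (m.b) --- the analogues of the properties quoted from \cite{MW} in the $\SU(2)$ case, which should follow from index computations for parabolic bundles and are among the symplectic inputs developed in Section \ref{Sec:ainfinity}. Granting this, the combination of Theorems \ref{existequiv} and \ref{mainthm} used in Proposition \ref{prop7575} yields $G$-equivariant relative-divisor moduli spaces ${\mathcal M}^{\rm RGW}(a,b;\beta;L_0,L_1)$ with $G$-equivariant Kuranishi structures, weakly submersive evaluation maps, and codimension-one boundary as in \eqref{form7575}. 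Integration along the fibre over the approximate Borel constructions produces the equivariant differential $\delta_N = d + \sum d_{p,q;\beta}$, and the limit $N\to\infty$ defines the $H^*_G$-module $\overline I^*_{symp}(M,\mathcal E) := HF_G(L_0,L_1;X\setminus D)$, graded cyclically by the appropriate replacement of the mod $8$ grading of Proposition \ref{prop7575}. When $\widetilde G = \SU(2)$, $G = {\rm PU}(2)$ and $M$ is an integral homology sphere, $\mathcal E$ is forced to be trivial since it is classified by $H^2(M;\Z/2)=0$, and $X$, $D$, $L_0$, $L_1$ and every moduli space coincide with those of Section \ref{Sec:AFori} by construction, so the two definitions agree; this last comparison is a matter of unwinding definitions.

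It remains to show that $\overline I^*_{symp}(M,\mathcal E)$ is independent of all choices. Independence of the base point $p$, the trivialization, the $G$-equivariant compatible almost complex structure, and the multisections is standard: one builds cobordisms of the relevant equivariant relative-divisor moduli spaces and argues as in \cite{DF,FuFu5}. Independence of the Heegaard splitting is the crux: by the Reidemeister--Singer theorem it suffices to treat a single stabilization, and, following the outline of Remark \ref{inv} for Conjecture \ref{symp-equiv}, the natural route is to realize the effect of a stabilization as geometric composition with a Lagrangian correspondence between the pairs $(X,D)$ and $(X',D')$ attached to $\Sigma_g$ and $\Sigma_{g+1}$, and to invoke invariance of ($G$-equivariant) quilted Floer homology under such composition. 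This forces one to study holomorphic quilts in $X\times X'$ relative to the normal crossing divisor $(D\times X')\cup(X\times D')$, that is, the extension of the theory of Section \ref{sec:divisor complement} to normal crossing divisors which is still in progress (cf.\ Conjecture \ref{cong59} and Remark \ref{normal-crossing}); alternatively one can use the product-target compactification $\mathcal{M}'$ of \cite[Section 12]{FuFu50}, in which the sphere bubbles on the two factors are handled separately so that only smooth divisors appear. I expect this last step --- setting up relative, equivariant quilted Floer theory and proving its invariance under composition in the presence of normal crossing (or, via $\mathcal{M}'$, only smooth) divisors --- to be the main obstacle. The remaining points, namely carrying the $G$-equivariant refinement through the whole argument, the bookkeeping of the alcove walls and of $\pi_0$ of the relevant gauge groups for non-trivial $\mathcal E$, and the cyclic grading, are further technical issues but ought to be manageable once that framework is available.
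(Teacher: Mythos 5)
Your outline follows the same broad route the paper itself sketches in Sections \ref{sec:generalgroup} and \ref{Sec:ainfinity} --- cut-down extended moduli spaces for general $G$ and $\mathcal F$, $G$-equivariant Lagrangians from $3$-manifolds with boundary, equivariant Floer homology relative to a divisor, and a stabilization/quilt argument for invariance. The step where it goes wrong is the assertion that, ``for a regular value $\theta$ in the fundamental alcove sufficiently close to $0$,'' the complement divisor $D$ is \emph{smooth} and Theorem \ref{mainthm} applies verbatim. That is a rank-one phenomenon. For $G$ of rank $\ge 2$ (already for ${\rm PU}(3)$; see Figure \ref{zu3}) there is no single-parameter cut at all: to produce a \emph{compact} ambient manifold one must perform a non-abelian symplectic cut along an entire Delzant polytope $\Delta^+_{\mathcal N}(\epsilon)$ inside the Weyl chamber $\ft^+$, and the complement $D$ then has one irreducible component per facet of $\Delta^+_{\mathcal N}(\epsilon)$ lying in the interior of $\ft^+$. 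Shrinking $\epsilon$ does not reduce this to a single facet --- the cone $\ft^+$ near the origin always needs several hyperplanes to be boxed in --- so for higher rank one never sees the smooth-divisor situation. The paper says this explicitly after Proposition \ref{prop93}: $D$ is expected to be a \emph{normal crossing} divisor, and the entire construction for general $G$ therefore rests on Conjecture \ref{cong59}, which is open. Your fallback via the product-target compactification $\mathcal M'$ of \cite[Section 12]{FuFu50} does not rescue this step either: the paper invokes $\mathcal M'$ only for the very special normal crossing divisor $(D_1\times X_2)\cup(X_1\times D_2)$ that appears in the quilt argument, where the two factors are handled independently; the divisor coming from a polytope cut has no comparable product structure.

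Two lesser points. The monotonicity of $L_i$ in $X\setminus D$ and the minimal Maslov bound, which you import from \cite{MW} in the $\SU(2)$ case, are not established for general $G$; Section \ref{Sec:ainfinity} supplies the $A_\infty$ machinery but not these geometric inputs, and they would have to be proved as part of the construction. Also, the paper's plan in Sections \ref{sec:generalgroup}--\ref{admissible} splits $M$ along an arbitrary Riemann surface into two possibly non-handlebody halves (cf.\ Conjecture \ref{cong8585}) rather than using a Heegaard surface; your clutching-map device for carrying the topology of a non-trivial $\mathcal E$ on a Heegaard surface is plausible but adds the extra burden of showing independence of the chosen clutching representative. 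Your observation that $\mathcal E$ is necessarily trivial when $M$ is an integral homology sphere and $G={\rm PU}(2)$, since $H^2(M;\Z/2)=0$, is correct and is exactly what makes the construction reduce to that of Section \ref{Sec:AFori}.
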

\noindent 
To be more precise, we expect that the above invariant is defined using Lagrangian Floer homology on appropriate moduli spaces of flat $G$-connections over Riemann surfaces. We shall propose a plan for the construction of this invariant in Section \ref{sec:generalgroup}. The tools from symplectic topology required for this construction are discussed in the next section.

In another level of generalization, one can hope to define symplectic instanton Floer homology for 3-manifolds with boundary. In order to state the expected structure of symplectic instanton Floer homology for 3-manifolds with boundary, we need to recall the definition of $A_\infty$-categories:

\begin{definition}
	Fix a commutative ring $R$. An $A_{\infty}$-category $\mathscr C$ consists of a set of objects $\mathscr{OB}(\mathscr C)$, 
	a graded $R$-module of {\it morphisms} $\mathscr C(c,c')$ for each pair of objects $c,c' \in \mathscr{OB}(\mathscr C)$, 
	and the structural operations $\frak m_k : \bigotimes_{i=1}^k \mathscr C(c_{i-1},c_i)\to \mathscr C(c_{0},c_k)$ of degree $k-2$
	for each $k\geq 1$. 
	The multiplication maps $\frak m_k$ are required to satisfy the following relations:
	\begin{equation}\label{formula25}
		\sum_{k_1+k_2=k+1}\sum_{i=0}^{k_1-1}(-1)^* \frak m_{k_1}(x_1,\dots,x_i,\frak m_{k_2}(x_{i+1},\dots,x_{k_2}),\dots,x_k)=0
	\end{equation}
	where $* = i +\sum_{j=1}^i \deg x_j$.
\end{definition}

Let $M$ be a 3-dimensional manifold whose boundary is decomposed as below:
\begin{equation*}
	\partial M = -\Sigma_1 \sqcup  \Sigma_2.
\end{equation*}
where $-\Sigma_1$ denotes the 3-manifold $\Sigma_1$ with the reverse orientation. Suppose also $\mathcal E$ is a $G$-bundle on $M$ whose restriction to $\Sigma_i$ is denoted by $\mathcal F_i$. We shall say $(M,\mathcal E)$ is a {\it cobordism} from $(\Sigma_1,\mathcal F_1) $ to $(\Sigma_2,\mathcal F_2)$ and we shall write:
\begin{equation}\label{3mfdassum}
	(M,\mathcal E): (\Sigma_1,\mathcal F_1) \to(\Sigma_2,\mathcal F_2).
\end{equation}

\begin{conjecture}\label{conj21}
            \begin{enumerate}
                    \item[(A-1)]
                    	For any $G$-bundle $\mathcal F$ over a Riemann surface, 
                    	there is a unital\footnote{A filtered $A_{\infty}$ category is unital if it has a strict unit.} filtered $A_{\infty}$-category 
                    	$\fI(\Sigma,\mathcal F)$ over the ring $\Lambda_0^{H^*_{G}}$ The $A_\infty$-category associated to $(-\Sigma,\mathcal F) $ is 
			$\fI(\Sigma,\mathcal F)^{\rm op}$, the opposite $A_{\infty}$ category of 
			$\fI(\Sigma,\mathcal F)$\footnote{The opposite $A_{\infty}$ category is defined 
			by reversing the direction of arrows. See \cite[Definition 7.8]{fu6}.}.
			Moreover, if $\Sigma$ is the disjoint union of two surfaces $\Sigma_1$, $\Sigma_2$, and 
			the restriction of $\mathcal E$ to $\Sigma_i$ is $\mathcal E_i$, then we have the identification:
			\[
			  \fI(\Sigma,\mathcal F)\cong \fI(\Sigma_1,\mathcal F_1)\otimes \fI(\Sigma_2,\mathcal F_2).
			\]
			Here the right hand side is the tensor product of filtered $A_{\infty}$-categories. (See \cite{Limo,FuFu50}.)
                    \item[(A-2)]
			For any pair as in \eqref{3mfdassum}, there is a filtered 
			$A_{\infty}$ functor\footnote{In the terminology of \cite{FuFu50}, 
			$\fI_{(M,\mathcal E)}$ is a {\it strict} filtered $A_{\infty}$ functor.}:
			\[ 
			  \fI_{(M,\mathcal E)} : \fI(\Sigma_1,\mathcal F_1)\to 
			  \fI(\Sigma_2,\mathcal F_2).
			\]
			The $A_\infty$-functor associated  to $(-M,\mathcal E)$ is the adjoint functor of 
			$\fI(M,\mathcal E)$\footnote{See \cite{FuFu50} for the definition of adjoint functor of a filtered $A_{\infty}$functor.}.
            	   \item[(A-3)]
			For $i=1,2$, let $(M_i,\mathcal E_i)$ be a 3-dimensional cobordism from $(\Sigma_i,\mathcal F_i)$ to 
			$(\Sigma_{i+1},\mathcal F_{i+1})$. 
			Let $(M,\mathcal E)$ be the result of composing these cobordisms along $(\Sigma_2,\mathcal F_2)$. Then:
			\begin{equation}\label{form14}
				\fI_{(M,\mathcal E)}\cong \fI_{(M_2,\mathcal E_2)}\circ \fI_{(M_1,\mathcal E_1)}.
			\end{equation}
			Here $\circ$ is the composition of filtered $A_{\infty}$-functors 
			and $\cong$ is the homotopy equivalence of filtered 
			$A_{\infty}$ functors\footnote{Two filtered $A_{\infty}$ functors are  homotopy equivalent if they are 
			homotopy equivalent in the functor category. (See \cite[Theorem 7.55]{fu6}.)}
			from $\fI(\Sigma_1,\mathcal F_1)$ to $\fI(\Sigma_2,\mathcal F_2)$.
	\end{enumerate}
\end{conjecture}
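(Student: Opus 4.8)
The plan is to run, for a general compact simply connected semisimple $\widetilde G$ and its adjoint group $G$, the same sequence of constructions that Sections~\ref{sec:extendedmoduli}--\ref{Sec:AFori} carry out for $\widetilde G=\SU(2)$, and then to organize them functorially. First I would fix, for each $(\Sigma,\mathcal F)$, the cut-down extended moduli space $X=\widehat R(\Sigma,\mathcal F,p,\theta)$ together with its hypersurface $D$ and its Hamiltonian $G$-action --- the analogue of Theorem~\ref{thm3434}, which is to be established in Section~\ref{sec:generalgroup}. The objects of $\fI(\Sigma,\mathcal F)$ are then the $G$-equivariant spin Lagrangian submanifolds of $X\setminus D$ that are monotone relative to $D$ in the sense of Section~\ref{sec:divisor complement} (handlebodies supply the basic examples via the analogue of Proposition~\ref{handle-body-lag}), decorated with the bounding-chain/local-system data needed for orientations and for the Borel construction. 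For a pair $L_0,L_1$ of such objects, the morphism module $\fI(\Sigma,\mathcal F)(L_0,L_1)$ over $\Lambda_0^{H^*_G}$ is the equivariant Floer cochain complex built from the moduli spaces $\mathcal M^{\rm RGW}(\cdot)$ exactly as in Proposition~\ref{prop7575}, that is, by combining the relative-divisor construction of Theorem~\ref{mainthm} with the post-finite-dimensional-reduction Borel construction of Theorem~\ref{existequiv}.

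To obtain (A-1) it remains to define the higher products and to check their properties. The operation $\frak m_k$ counts, with a system of $G$-invariant continuous families of perturbations compatible under gluing, the $0$-dimensional strata of moduli spaces of $J$-holomorphic $(k+1)$-gons in $X\setminus D$ with boundary on $L_0,\dots,L_k$, compactified in the relative Gromov--Witten sense so that discs meeting $D$ are separated off exactly as in the proof of Theorem~\ref{mainthm}; the $A_\infty$-relations~\eqref{formula25} then follow from the codimension-one boundary description of the $1$-dimensional strata, which by the same monotonicity-in-the-complement-of-$D$ argument consists only of splittings into lower polygons, with no disc bubble lying inside $D$. The strict unit is produced by the usual fundamental-cycle argument of \cite{fooobook,fooobook2}, carried out equivariantly and relative to $D$. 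Orientation reversal is immediate: $\widehat R(-\Sigma,\mathcal F,p,\theta)$ is the same manifold with $-\omega$ and the same $D$, and reflecting each holomorphic polygon identifies its Fukaya-type category with $\fI(\Sigma,\mathcal F)^{\rm op}$ as in \cite{fu6}. For a disjoint union the extended moduli space is the product $X_1\times X_2$ with the normal-crossing divisor $(D_1\times X_2)\cup(X_1\times D_2)$, and the K\"unneth/product theorem for these $A_\infty$-categories --- in the $\mathcal M'$-type form discussed in Remark~\ref{inv} and \cite{FuFu50,Limo} --- gives the asserted identification with $\fI(\Sigma_1,\mathcal F_1)\otimes\fI(\Sigma_2,\mathcal F_2)$.

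For (A-2) and (A-3) the geometric input is that a cobordism $(M,\mathcal E)\colon(\Sigma_1,\mathcal F_1)\to(\Sigma_2,\mathcal F_2)$ determines a Lagrangian correspondence $\Lambda_M\subset X_1^-\times X_2$ in the (extended) moduli spaces, namely the image of restriction from flat $\mathcal E$-connections on $M$. The functor $\fI_{(M,\mathcal E)}$ is then the $A_\infty$-functor associated to $\Lambda_M$: on objects it is geometric composition $L\mapsto\Lambda_M\circ L$ (interpreted in the extended Fukaya category of generalized correspondences when this composition is not embedded), and on morphisms it counts quilted holomorphic polygons with one seam on $\Lambda_M$, again in the relative-to-divisor RGW compactification and $G$-equivariantly. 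Property~\eqref{form14} then reduces to the fact that gluing $M_1$ and $M_2$ along $(\Sigma_2,\mathcal F_2)$ glues the correspondences, $\Lambda_M=\Lambda_{M_2}\circ\Lambda_{M_1}$: when this geometric composition is embedded one gets a strict equality of functors, and in general the strip-shrinking (figure-eight bubbling) analysis of \cite{We,We2}, performed relative to $D$ and compatibly with the $G$-action, yields the homotopy equivalence of filtered $A_\infty$-functors.

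The main obstacle is analytic and is the same one flagged in Section~\ref{modulispace}: the correspondences $\Lambda_M$ are singular, because the moduli space of flat $G$-connections on $M$ contains reducibles --- precisely the phenomenon that forced the equivariant and divisor-complement machinery in the first place. Thus the entire package of Fredholm theory, $G$-equivariant Kuranishi structures with compatible perturbations, and the strip-shrinking limit has to be established for quilted moduli spaces attached to singular Lagrangian correspondences inside divisor complements, and the underlying extension of Theorem~\ref{mainthm} to normal-crossing divisors, needed both for the product in (A-1) and for the quilt seams, is itself only partly in place (see Remark~\ref{inv}). I expect this, rather than the formal $A_\infty$-algebra, which is routine once the moduli spaces are in hand, to absorb essentially all of the work.
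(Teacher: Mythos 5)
Note first that Conjecture~\ref{conj21} is stated in the paper without proof; what the paper offers is only a research plan, spread across Sections~\ref{Sec:ainfinity} and \ref{sec:generalgroup} (Theorems~\ref{thm5151}, \ref{thm54}, \ref{cat-div-comp}, Conjecture~\ref{cong59}, Proposition~\ref{prop93}, and Conjectures~\ref{conj84}--\ref{cong8585}) together with Remark~\ref{inv}. Against that plan, your treatment of (A-1) is essentially the same as the paper's: take the polytope-cut extended moduli space $\widehat R(\Sigma,\mathcal N,\mathcal F,p;\epsilon)$ with its $G$-action and boundary divisor, use handlebody Lagrangians and bounding cochains as objects, and build the $A_\infty$-operations from RGW-compactified $G$-equivariant polygon moduli spaces in the divisor complement. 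Your observations about orientation reversal and about disjoint unions producing a normal-crossing divisor also agree with what the paper says.

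Where you diverge is in (A-2) and (A-3). You attach to a cobordism a Lagrangian correspondence $\Lambda_M\subset X_1^-\times X_2$ and then invoke the Wehrheim--Woodward quilted-Floer formalism with strip-shrinking and figure-eight bubbling \cite{We,We2} to obtain the functor $\fI_{(M,\mathcal E)}$ and the gluing property \eqref{form14}. The paper's intended route (signalled in Remark~\ref{inv} and in the references to \cite{fu3,LL,FuFu50} throughout Sections~\ref{modulispace}, \ref{Sec:ainfinity} and \ref{sec:generalgroup}) is the second author's correspondence-functor machinery from \cite{fu3,FuFu50}, which is built on cobordism/``$Y$-diagram'' arguments and an $A_\infty$-bimodule formulation rather than on strip-shrinking, combined with the auxiliary compactification $\mathcal M'$ of \cite[Section 12]{FuFu50} whose point is precisely to treat sphere bubbles in the two product factors separately and thereby \emph{avoid} the unproven normal-crossing extension of Theorem~\ref{mainthm}. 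Your route, by contrast, commits you both to normal-crossing RGW compactifications on $X_1\times X_2$ (i.e.\ to the full strength of Conjecture~\ref{cong59}) and to extending the figure-eight bubbling analysis to the divisor-complement, equivariant, non-monotone setting, which is a substantially harder analytic burden than the paper is proposing to shoulder; on the other hand, when it applies, strip-shrinking gives a more geometrically transparent proof of \eqref{form14} than the bimodule formalism. You do correctly flag singular correspondences and normal crossings as the main obstructions, so the gap analysis is sound, but you should not present the \cite{We,We2} package as if its hypotheses were available here --- in the divisor complement and with $G$-equivariance those hypotheses are open, and that is exactly what the paper's $\mathcal M'$ workaround is designed to sidestep.
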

The following conjecture extends Conjecture \ref{conj21} to the case that at least one of the ends of $(M,\mathcal E)$ is empty:

\begin{conjecture}
	Let $(M,\mathcal E)$ be as in \eqref{3mfdassum}:
	\begin{enumerate}
		\item[(B-1)]
			If $\Sigma_1=\emptyset$, then $\fI_{(M,\mathcal E)}$ is an object of $\fI(\Sigma_2,\mathcal F_2)$.
		\item[(B-2)]
			If $\Sigma_2=\emptyset$, then $\fI_{(M,\mathcal E)}$ is a filtered $A_{\infty}$ functor from $\fI(\Sigma_1,\mathcal F_1)$ to 
			$\mathcal{CH}$, where $\mathcal{CH}$ is the $DG$ category of chain complexes.
		\item[(B-3)]
			If $\Sigma_1=\Sigma_2=\emptyset$, then $\fI_{(M,\mathcal E)}$ is a chain complex over $\Lambda^{H^*_{G}}_0$.
	\end{enumerate}
\end{conjecture}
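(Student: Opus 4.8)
\begin{remark}
We outline how the conjecture above should be deduced from Conjecture \ref{conj21}, the only genuinely new ingredient being the determination of the filtered $A_\infty$-category $\fI(\emptyset)$ attached to the empty surface. The plan is to show that $\fI(\emptyset)$ is equivalent --- in the sense appropriate to the identification in (A-1) --- to $\mathcal{CH}$, the $DG$ category of chain complexes over $\Lambda_0^{H^*_G}$; equivalently, that $\fI(\emptyset)$ is a unit for the tensor product of filtered $A_\infty$-categories. On one hand, applying (A-1) to the trivial decomposition $\Sigma = \Sigma \sqcup \emptyset$ forces $\fI(\Sigma,\mathcal F)\cong \fI(\Sigma,\mathcal F)\otimes\fI(\emptyset)$, so $\fI(\emptyset)$ must act as a unit for $\otimes$. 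On the other hand, the construction sketched in Sections \ref{sec:generalgroup} and \ref{Sec:ainfinity} assigns to the empty surface the extended moduli space consisting of a single point, whose associated Lagrangian Floer category has a single object with endomorphism algebra $\Lambda_0^{H^*_G}$; with respect to the relevant notion of equivalence of filtered $A_\infty$-categories this one-object category is precisely the unit $\mathcal{CH}$.

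Granting $\fI(\emptyset)\cong \mathcal{CH}$, statements (B-1)--(B-3) become formal consequences of the functoriality in (A-2). For (B-1), a cobordism $(M,\mathcal E)\colon \emptyset \to (\Sigma_2,\mathcal F_2)$ yields by (A-2) a strict filtered $A_\infty$ functor $\fI_{(M,\mathcal E)}\colon \mathcal{CH}\to \fI(\Sigma_2,\mathcal F_2)$; since $\mathcal{CH}$ is generated by the rank-one complex placed in degree $0$, whose endomorphism algebra is $\Lambda_0^{H^*_G}$, such a functor is, up to homotopy, the same datum as its value on this generator, that is, an object of $\fI(\Sigma_2,\mathcal F_2)$. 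For (B-2), $\fI_{(M,\mathcal E)}\colon \fI(\Sigma_1,\mathcal F_1)\to \fI(\emptyset) = \mathcal{CH}$ is already a filtered $A_\infty$ functor to $\mathcal{CH}$, which is the assertion. For (B-3) both source and target are $\mathcal{CH}$, and the argument used for (B-1) identifies $\fI_{(M,\mathcal E)}$ with a single chain complex over $\Lambda_0^{H^*_G}$. These conclusions should match the expected geometric content: for $M$ closed, (B-3) ought to recover a model of the instanton Floer chain complex $C_*(M,\mathcal E)$, and for $(M,\mathcal E) = (H_g,\mathcal E)$ a handlebody, the object produced by (B-1) should be the one represented by the Lagrangian $\tilde R(H_g,p)$ of Proposition \ref{handle-body-lag}.

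The main obstacle is twofold. First, and most seriously, Conjecture \ref{conj21} is itself open: it presupposes the construction of the filtered $A_\infty$-category $\fI(\Sigma,\mathcal F)$ from the (generally singular and $G$-equivariant) moduli space of flat connections on $\mathcal F$, together with strict functoriality under $3$-dimensional cobordisms; this is the content of the program described in Sections \ref{sec:generalgroup} and \ref{Sec:ainfinity}, and it subsumes the analytic difficulties discussed in Sections \ref{sec:divisor complement} and \ref{sec:equivariant}. Second, even granting Conjecture \ref{conj21}, the step $\fI(\emptyset)\cong \mathcal{CH}$ requires reconciling the two descriptions of the theory --- the Lagrangian Floer theoretic one built on extended moduli spaces and the gauge theoretic one via instanton Floer homology of closed $3$-manifolds --- since only the latter makes the chain complex in (B-3) manifest. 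Establishing that closed $3$-manifolds produce the instanton Floer chain complex as an object of $\mathcal{CH}$, while handlebodies produce Lagrangian submanifolds, is precisely the compatibility addressed in the last section of the paper, and we regard it as the crux of the argument.
\end{remark}
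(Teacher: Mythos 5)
The statement you were given is one of the paper's conjectures and is left unproved there, so there is no argument of the authors for me to compare yours against: the paper states (B-1)--(B-3) alongside (A-1)--(A-3) and (C-1)--(C-3) as parts of the expected topological field theory package for Yang--Mills gauge theory without deriving any one of these conjectures from the others.

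As a sketch of why the statement ought to hold \emph{assuming} Conjecture \ref{conj21}, your proposal is sound and probably reflects the intended reading. Forcing $\fI(\emptyset)$ to be a tensor unit from the disjoint-union law in (A-1), and confirming this on the Lagrangian--Floer side (the extended moduli space of the empty surface is a point, so the associated category has a single object with endomorphism algebra $\Lambda_0^{H_G^*}$), is correct, and the reduction of a strict filtered $A_\infty$-functor out of the one-object unit category to its value on the single object is what gives (B-1) and (B-3). One place where I would ask you to be more careful is the phrase ``this one-object category is precisely the unit $\mathcal{CH}$'': the DG category of chain complexes is the module category over that unit and is not $A_\infty$-equivalent to it; (B-2) and (B-3) as stated refer to $\mathcal{CH}$, so $\fI(\emptyset)\cong\mathcal{CH}$ should really be read as a designated enlargement, with (B-1) and (B-3) then recovered by evaluation on the generator rather than by literal equivalence. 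You locate the genuine mathematical content correctly: the entire deduction is conditional on (A-1)--(A-3), which is the programmatic content of Sections \ref{sec:generalgroup} and \ref{Sec:ainfinity}, and the claim that a closed $3$-manifold, fed through this formalism, actually produces an instanton Floer chain complex does not follow from the categorical axioms alone but is the Atiyah--Floer-type compatibility addressed by Conjectures \ref{conj7878} and \ref{admissible-AF-synp-quo} and the mixed-equation strategy of Section \ref{modulispace}.
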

The next conjecture is an extension of part (A-3) of Conjecture \ref{conj21} to the case that one of the boundary components is empty:
\begin{conjecture}
	Let $(M_1,\mathcal E_1)$ and $(M_2,\mathcal E_2)$ be as in part {\rm (A-3)} of Conjecture \ref{conj21}:
	\begin{enumerate}
		\item[(C-1)] If $\Sigma_1 = \emptyset$, then:
				\begin{equation}\label{eqC1}
					\fI_{(M,\mathcal E)}\cong \fI_{(M_2,\mathcal E_2)}(\fI_{(M_1,\mathcal E_1)})
				\end{equation}
				This is a homotopy equivalence of objects in the category $\fI(\Sigma_3,\mathcal F_3)$.
		\item[(C-2)] If $\Sigma_2= \emptyset$, then \eqref{form14} as the homotopy equivalence of 
			$A_\infty$ functors from the category $\fI(\Sigma_1,\mathcal F_1)$ to the category $\mathcal{CH}$ holds.
		\item[(C-3)] If $\Sigma_1 =\Sigma_2= \emptyset$, then \eqref{eqC1} as a chain homotopy equivalence between chain complexes
		holds.
	\end{enumerate}
\end{conjecture}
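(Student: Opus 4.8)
The plan is to view (C-1)--(C-3) as the ``empty boundary'' specializations of part (A-3) of Conjecture~\ref{conj21}, and to establish them both formally (granting Conjecture~\ref{conj21}) and --- as a proving ground --- directly. On the formal side one extends the assignments of (A-1)--(A-3) so that the empty surface is an allowed object and (B-1)--(B-3) hold: a cobordism from $\emptyset$ to $\Sigma$ produces an object of $\fI(\Sigma,\mathcal F)$ (in the filtered sense, i.e.\ together with its bounding cochain), a cobordism from $\Sigma$ to $\emptyset$ produces a filtered $A_\infty$-module over $\fI(\Sigma,\mathcal F)$, equivalently a filtered $A_\infty$-functor into the $DG$ category $\mathcal{CH}$, and a cobordism $\emptyset\to\emptyset$ produces a chain complex over $\Lambda_0^{H^*_G}$; this must be done compatibly with the disjoint-union (tensor) and orientation-reversal (opposite) conventions of (A-1). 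Under these identifications, (C-1) asserts that the object attached to $(M_2,\mathcal E_2)\circ(M_1,\mathcal E_1)$ is the image of the object $\fI_{(M_1,\mathcal E_1)}$ under the functor $\fI_{(M_2,\mathcal E_2)}$, with the bounding cochain pushed forward through the Taylor components of the functor; (C-2) asserts that the composite is the functor $X\mapsto\fI_{(M_1,\mathcal E_1)}(X)\otimes\fI_{(M_2,\mathcal E_2)}$, the module tensored pointwise with the object; and (C-3) asserts that the resulting object is the tensor product of the chain complex $\fI_{(M_1,\mathcal E_1)}$ with the object $\fI_{(M_2,\mathcal E_2)}$. In each case the claimed homotopy equivalence is then exactly \eqref{form14} of (A-3) restricted to the relevant, possibly trivial, source category; so, granting (A-3), the conjecture follows by unwinding definitions, the only genuine check being compatibility of units, signs and Novikov filtrations between (A-1)--(A-3) and the conventions for $\mathcal{CH}$, for the tensor product of filtered $A_\infty$-categories, and for the opposite category.

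Because (A-3) is itself conjectural, I would in parallel establish (C-1)--(C-3) directly, as the simplest non-trivial instances of the gluing formula. Following Section~\ref{modulispace} and \cite{Li,DFL,fu3}, each functor $\fI_{(M,\mathcal E)}$ is built from moduli spaces of solutions of a mixed anti-self-dual/holomorphic-curve equation: over the symplectic side one has holomorphic polygons (or, when an end is capped off, half-plane domains of the shape $W_+$ of Section~\ref{modulispace}) into an extended moduli space $\widehat R(\Sigma,p,\theta)$ with Lagrangian boundary conditions from handlebodies, and over the gauge side an ASD connection on a 4-manifold built from $M$, matched along $\Sigma_2$ by the singular Lagrangian correspondence of Section~\ref{modulispace} (implicit in Theorem~\ref{thm3434}). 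For $(M,\mathcal E)=(M_2,\mathcal E_2)\circ(M_1,\mathcal E_1)$ one glues these 4-manifolds along a long neck $\Sigma_2\times[-T,T]$; the neck-stretching limit $T\to\infty$ should identify the moduli space for $(M,\mathcal E)$ with a fibre product, over the moduli space of flat connections on $\Sigma_2$, of those for $(M_1,\mathcal E_1)$ and $(M_2,\mathcal E_2)$ --- which is precisely the combinatorial recipe for the composite $A_\infty$-functor, for evaluation of a functor on an object, and for tensoring with an object, respectively. Counting rigid configurations then produces the chain map realizing \eqref{eqC1}; the codimension one boundary of the corresponding 1-dimensional moduli spaces gives $\partial$-compatibility; and a further 1-parameter family in the neck length $T$, together with the fact that the only zero-energy configuration is the constant one, yields the homotopy upgrading this chain map to a homotopy equivalence. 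When $\Sigma_1=\emptyset$ the symplectic side degenerates to a half-plane and $M_1$ contributes a closed piece, giving (C-1); when in addition $\Sigma_2=\emptyset$ the gauge side degenerates further, leaving only ASD moduli on $M_1$ and $M_2$, giving (C-3).

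The hard part is analytic, and is the same obstruction flagged in Section~\ref{modulispace}: the Lagrangian correspondence attached to $\Sigma_2$ is \emph{singular}, since $R(\Sigma_2)$ and its reducible locus are singular, so Fredholm theory, transversality via Kuranishi structures and continuous families of perturbations, and --- above all --- the compactness and gluing analysis across the stretched neck are not available off the shelf. The extended-moduli-space resolution $\widehat R(\Sigma,p,\theta)$ together with the ${\rm PU}(2)$-equivariant, smooth-divisor-complement framework of Sections~\ref{sec:extendedmoduli}--\ref{Sec:AFori} is designed precisely to tame this, but the neck-stretching argument in this equivariant, divisor-complement setting still has to be carried out; and since Conjecture~\ref{conj21} is stated for an arbitrary $G$, one must first make the constructions of Section~\ref{sec:generalgroup} and the symplectic input of Section~\ref{Sec:ainfinity} rigorous, and arrange the empty-surface conventions above to respect the behaviour of (A-1) under disjoint union and orientation reversal. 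By contrast, the purely algebraic bookkeeping --- recognizing ``$A_\infty$-module composed with object'' and ``chain complex tensored with object'' as the correct operations, and tracking signs, units and Novikov filtrations --- is routine once strict unitality from (A-1) is invoked, and I do not expect it to be the bottleneck.
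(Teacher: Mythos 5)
This statement is a \emph{conjecture}, and the paper offers no proof of it; there is nothing to compare your proposal against. The paper's role for (C-1)--(C-3) is purely programmatic: they are the boundary-case specializations of the gluing law (A-3), to be realized eventually by the machinery of Sections 5--8 (extended moduli spaces and their symplectic cuts, ${\rm PU}(2)$-equivariant Lagrangian Floer theory in divisor complements, and moduli of solutions of the mixed ASD/holomorphic equation). Your second paragraph --- neck-stretching along $\Sigma_2\times[-T,T]$, identifying the long-neck moduli space with a fibre product over the Lagrangian correspondence, rigid counts giving the chain map and a further $T$-parameter family giving the homotopy --- is exactly the kind of argument the authors indicate they have in mind (compare Section 6 and the remarks around Conjectures 5.3--5.5), and your diagnosis of the analytic bottleneck (singular correspondence, compactness/gluing across the stretched neck in the equivariant divisor-complement Kuranishi setting) matches the caveats they flag.

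One concrete correction to your ``formal side.'' Your readings of (C-2) as ``module tensored pointwise with an object'' and of (C-3) as ``chain complex tensored with an object'' take the indexing in the statement at face value, but with $\Sigma_2=\emptyset$ as literally written the types do not line up: $(M_1,\mathcal E_1)\colon\Sigma_1\to\emptyset$ gives a functor $\fI(\Sigma_1,\mathcal F_1)\to\mathcal{CH}$ by (B-2), while $(M_2,\mathcal E_2)\colon\emptyset\to\Sigma_3$ gives an \emph{object} of $\fI(\Sigma_3,\mathcal F_3)$ by (B-1), so neither $\fI_{(M_2,\mathcal E_2)}\circ\fI_{(M_1,\mathcal E_1)}$ nor $\fI_{(M_2,\mathcal E_2)}(\fI_{(M_1,\mathcal E_1)})$ is defined, and there is no gluing along $\Sigma_2$ to feed into a neck-stretching argument. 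The statement almost certainly intends $\Sigma_3=\emptyset$ in (C-2) and $\Sigma_1=\Sigma_3=\emptyset$ in (C-3); with that reading, (C-2) is \eqref{form14} with target $\mathcal{CH}$ and (C-3) is \eqref{eqC1} with $\fI_{(M_2,\mathcal E_2)}\colon\fI(\Sigma_2,\mathcal F_2)\to\mathcal{CH}$ evaluated on the object $\fI_{(M_1,\mathcal E_1)}$, and the bespoke tensor-product formulas you introduce are unnecessary. Pinning down this indexing is worth doing before investing in the compatibility-of-units/signs/filtrations bookkeeping you mention, since the ``routine'' algebraic unwind you describe is only routine once the domains and codomains match.
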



\section{Lagrangians and $A_\infty$-categories}
\label{Sec:ainfinity}


For a given symplectic manifold $(X,\omega)$, we can define an $A_{\infty}$-category over the universal Novikov ring $\Lambda_0^\R$, which is usually denoted by $\frak{Fuk}(X,\omega,\fL)$. The objects of this category $\frak{Fuk}(X,\omega,\fL)$ are defined using immersed Lagrangian submanifolds, and $\fL$ denotes a collection of such Lagrangians.


Suppose $\widetilde L$ is an immersed Lagrangian submanifold of $X$ given by $\iota:\widetilde L \to X$ where the self-intersections of $X$ are transversal. Define $CF(\widetilde L,\widetilde L)$ to be the cohomology group $H^*(\widetilde L \times_X \widetilde L,\Lambda^\R)$ where $\widetilde L \times_X \widetilde L$ is the fiber product of the map $\iota$ with itself\footnote{To be more precise, one needs to start with a chain model for this cohomology group. As it is shown in \cite{fooobook}, this chain model can be replaced with the cohomology groups by an algebraic argument.}. Therefore, $CF(\widetilde L,\widetilde L)$ is the direct sum of $H^*(\widetilde L)$ and a free abelian group generated by the self-intersection points of $\widetilde L$. For $\beta \in H_2(X,\iota(\widetilde L);\Z)$, let $\mathcal M_{k+1}(\widetilde L;\beta)$ be the compactified moduli space of pseudo-holomorphic disks with $k+1$ boundary marked points. The elements of $\mathcal M_{k+1}(\widetilde L;\beta)$ are required to represent the homology class $\beta$ and need to satisfy the Lagrangian boundary condition. The boundary of an element of $\mathcal M_{k+1}(\widetilde L;\beta)$ has to be mapped to $\iota (\widetilde L)$, and away from the marked points, it can be lifted to $\widetilde L$. (See Figure \ref{zu2} for a schematic picture and \cite[Definition 2.1.27]{fooobook} and \cite[Section 4]{AJ} for the precise definitions of these moduli spaces.) The moduli space $\mathcal M_{k+1}(\widetilde L;\beta)$ can be used to form the following diagram:
\begin{equation}
\xymatrix{
(\widetilde L\times_X\widetilde L)^k&&&\mathcal M_{k+1}(\widetilde L;\beta)  \ar[lll]_{({\rm ev}_1,\dots,
{\rm ev}_k)\hspace{.4cm}}\ar[rrr]^{\hspace{.8cm}{\rm ev}_0} &&&(\widetilde L\times_X\widetilde L)
}
\nonumber
\end{equation}
where ${\rm ev}_i$ for $0\leq i\leq k$, is the evaluation map at the $i^{\rm th}$ marked point. A standard `pull-up-push-down construction' applied to these diagrams for various choices of $\beta$ determines a map $\fm_k:CF(\widetilde L,\widetilde L)^{\otimes k} \to CF(\widetilde L,\widetilde L)$ for any $k\geq 0$.
\begin{figure}[h]
\centering
\includegraphics[scale=0.3]{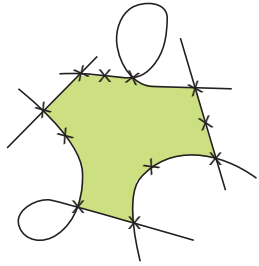}
\caption{The operation $\frak m_k$}
\label{zu2}
\end{figure}

Next, let $\fL$ be a finite family of immersed spin Lagrangian submanifolds of $X$. We say that this family is clean if for any two elements $(\widetilde L_1, \iota_1)$ and $(\widetilde L_2, \iota_2)$ of this family the fiber product $\widetilde L_1\times_X \widetilde L_2$ is a smooth manifold and the tangent space at each point of $\widetilde L_1\times_X \widetilde L_2$ is given by the fiber product of the tangent spaces of $\widetilde L_1$ and $\widetilde L_1$. 
Here we include the case $\tilde L_1 =\tilde L_2$.
For any two such elements of $\fL$, we define $CF(\widetilde L_1,\widetilde L_2)$ to be the cohomology group $H^*(\widetilde L_1 \times_X \widetilde L_2)$. Then the construction of the previous paragraph can be modified to define the following maps for any $k \geq 0$ and for any sequence $\{(\widetilde L_i, \iota_i)\}_{0\leq i \leq k}$ of the elements of $\fL$ \cite{fooobook,AJ}:
\[
  \fm_k : \bigotimes_{i=1}^k CF(\widetilde L_{i-1},\widetilde L_i) \to CF(\widetilde L_{0},\widetilde L_k) 
\]  

The maps $\fm_k$ satisfy the analogues of the $A_\infty$-relations in \eqref{formula25}. However, the map $\fm_0$ does not have to vanish in general\footnote{An $A_\infty$-category with a non-vanishing $\fm_0$ is called a {\it curved $A_\infty$-category.}}. Therefore, these maps cannot immediately be used to define an $A_\infty$-category. This issue can be fixed with the aid of {\it bounding cochains}. For an immersed Lagrangian $(\widetilde L,\iota)$, an element $b\in H^{odd}(\widetilde L,\Lambda_0)$ is called a bounding cochain if it is divisible by $T^\epsilon$ for a positive $\epsilon$, and it satisfies the following {\it Maurer-Cartan} equation:
\begin{equation}\label{formMC}
	\sum_{k=0}^{\infty} \fm_{k}(b,\dots,b) = 0,
\end{equation}

An object of the category $\frak{Fuk}(X,\omega,\fL)$ is a pair $(\widetilde L,b)$ where $\widetilde L$ is an element of $\fL$ and $b$ is a bounding cochain. The module of morphisms for two objects $(\widetilde L_0,b_0)$ and $(\widetilde L_1,b_1)$ is defined to be $CF(\widetilde L_0, \widetilde L_1)$. The structural map $\fm_k^{\vec b} : \bigotimes_{i=1}^k CF(\widetilde L_{i-1},\widetilde L_i) \to CF(\widetilde L_{0},\widetilde L_k)$ for a sequence of objects $\{(\widetilde L_i, \iota_i,b_i)\}_{0\leq i \leq k}$ is also defined as follows:
\begin{equation}\label{modified-m}
	\fm_k^{\vec b}(p_1,\dots,p_k):=\sum_{l_0\geq, \dots, l_k\geq 0}\fm_{k+l_0+\dots+l_k}(b_0^{\otimes l_0},p_1,b_1^{\otimes l_1},\dots,
	b_{k-1}^{\otimes l_{k-1}},p_k,b_{k}^{\otimes l_{k}})
\end{equation}
Using the results of \cite{fooobook,fooobook2}, it is shown in \cite{anchor,AFOOO,FuFu50} that $\frak{Fuk}(X,\omega,\fL)$ is an $A_\infty$-category in the case that $\fL$ consists of only embedded Lagrangians. The more general case of immersed Lagrangians is treated in \cite{AJ}.

Suppose $L_0$ and $L_1$ are two monotone and embedded Lagrangians in $X$ that satisfy the condition (m.a) of Section \ref{sec:equivariant}. Then  the map $\fm_0:\Lambda_0 \to CF(L_i,L_i)$ vanishes and we can associate the trivial bounding cochain to each of these Lagrangians. The map $\fm_1:CF(L_0,L_1) \to CF(L_0,L_1)$ defines a differential. The homology of this chain complex is the same as Oh's Lagrangian Floer homology for monotone Lagrangians \cite{Oh}.

We can also consider equivariant version of the category $\frak{Fuk}(X,\omega,\fL)$. The following theorem provides the main ingredient for the equivariant construction:

\begin{theorem}\label{thm5151}
	 Let $G$ be a Lie group acting on $(X,\omega)$. Let $\fL$ be a clean collection of immersed Lagrangians which are equivariant with respect to the action of $G$. 
	For any sequence $\{(\widetilde L_i, \iota_i)\}_{0\leq i \leq k}$ of the elements of $\fL$, 
	there exists a $H_G^*$-linear homomorphism:
	\[
	  \fm_k^G: \bigotimes_{i=1}^k H^*_G(\widetilde L_{i-1}\times_X \widetilde L_i,\Lambda_0) \to 
	  H^*_G(\widetilde L_{0}\times_X \widetilde L_k,\Lambda_0)
	\]
	which satisfies the $A_{\infty}$-relations in \eqref{formula25}. Moreover, the following diagram commutes:
	\begin{equation}
		\begin{CD}
	  	  	 \bigotimes_{i=1}^k H^*_G(\widetilde L_{i-1}\times_X \widetilde L_i,\Lambda_0) @ >\frak m_k^G>>
		   	  H^*_G(\widetilde L_{0}\times_X \widetilde L_k,\Lambda_0)\\
			@ V{}VV @ VV{}V\\
			\bigotimes_{i=1}^k H^*(\widetilde L_{i-1}\times_X \widetilde L_i,\Lambda_0) @ >\frak m_k>> 
			H^*(\widetilde L_{0}\times_X \widetilde L_k,\Lambda_0).
		\end{CD}
	\end{equation}
	Here the vertical arrows are canonical maps from equivariant cohomology to de-Rham cohomology. 
\end{theorem}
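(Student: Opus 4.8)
The plan is to construct $\fm_k^G$ by applying the equivariant version of the pull-up–push-down construction, using the approximate Borel construction exactly as in the proof of Theorem \ref{existequiv}, and then to verify the $A_\infty$-relations at the chain level before passing to cohomology. First I would fix a $G$-equivariant, $\omega$-compatible almost complex structure $J$ (such $J$ exist and the space of them is contractible, as recalled in Section \ref{sec:equivariant}). For each sequence $\{(\widetilde L_i,\iota_i)\}_{0\le i\le k}$ of members of $\fL$ and each $\beta$, the moduli space $\mathcal M_{k+1}(\{\widetilde L_i\};\beta)$ of pseudo-holomorphic polygons with Lagrangian boundary conditions carries a $G$-equivariant Kuranishi structure; here I would invoke \cite{FuFu5} (the construction used for the same purpose in Theorem \ref{existequiv}) to produce a $G$-equivariant system of Kuranishi structures compatible with the forgetful/gluing maps, so that the codimension-one boundary is the union of the fiber products coming from splitting a polygon into two. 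Cleanness of $\fL$ guarantees that the evaluation maps ${\rm ev}_i\colon \mathcal M_{k+1}(\{\widetilde L_i\};\beta)\to \widetilde L_{i-1}\times_X\widetilde L_i$ (and ${\rm ev}_0$ to $\widetilde L_0\times_X\widetilde L_k$) have smooth targets, and we may arrange the continuous families of perturbations to be $G$-invariant and ${\rm ev}_0$ to be weakly submersive.

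Next, for each finite-dimensional approximation $EG(N)\to BG(N)$ of the universal bundle, I would form the approximate Borel construction $\mathcal M_{k+1}(\{\widetilde L_i\};\beta)\times_G EG(N)$, whose boundary is described by the $G$-quotients of the fiber products above, and define
\[
  \fm_{k;\beta}^{G,N}(h_1,\dots,h_k)
  := ({\rm ev}_0)_!\bigl(({\rm ev}_1,\dots,{\rm ev}_k)^*(h_1\times\cdots\times h_k)\bigr),
\]
where integration along the fiber is taken over the perturbed Kuranishi space (as in \eqref{form31100}), $h_i\in\Omega^*_G(\widetilde L_{i-1}\times_X\widetilde L_i)$ at the $BG(N)$ level. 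Summing over $\beta$ with the Novikov weight $T^{\omega(\beta)}$ and over the de Rham differential $d$ produces operations $\fm_k^{G,N}$ on the Borel-model differential forms; Stokes' theorem applied to the codimension-one boundary strata — exactly the computation that gives \eqref{form311} in Theorem \ref{existequiv}, but now with $k$ inputs — yields the full $A_\infty$-relations \eqref{formula25} at the cochain level, where the $\fm_0$ term (a count of disk bubbles) is allowed to be nonzero as the footnote anticipates. Taking the inverse limit $N\to\infty$ gives $H_G^*$-multilinear operations on the equivariant forms, and the algebraic perturbation-lemma / homological-perturbation argument of \cite{fooobook} (the same device used there to replace a chain model by its cohomology) transfers the $A_\infty$-structure to the cohomology groups $H^*_G(\widetilde L_{i-1}\times_X\widetilde L_i,\Lambda_0)$, producing the claimed $\fm_k^G$. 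Commutativity of the stated diagram is then immediate: forgetting the $EG(N)$-factor is a morphism of the boundary-compatible systems of Kuranishi structures, so it intertwines fiber integration, and in the limit it induces the canonical map $H^*_G\to H^*_{dR}$ while carrying $\fm_k^G$ to $\fm_k$.

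The main obstacle I expect is the construction of the $G$-equivariant system of Kuranishi structures on $\mathcal M_{k+1}(\{\widetilde L_i\};\beta)$ that is simultaneously compatible with \emph{all} the boundary fiber products (over all ways of splitting the polygon) and with the forgetful maps, carried out so that a single $G$-invariant continuous family of multisections makes every ${\rm ev}_0$ weakly submersive; this is the technical heart, requiring the inductive ``tree-level'' compatibility bookkeeping of \cite{fooobook,fooobook2} in its $G$-equivariant refinement \cite{FuFu5}, and handling immersed Lagrangians (self-intersection strata, following \cite{AJ}) adds bookkeeping but no new conceptual difficulty. A secondary point to be careful about is convergence in the Novikov ring when summing over $\beta$ for each fixed output degree, and the well-definedness of the $N\to\infty$ limit, but these are handled exactly as in Theorem \ref{existequiv} and in the non-equivariant immersed case of \cite{AJ}.
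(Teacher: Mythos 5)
Your proposal follows essentially the same route as the paper's sketch: both use the $G$-equivariant Kuranishi structures of \cite{FuFu5} on $\mathcal M_{k+1}(\{\widetilde L_i\};\beta)$, form the approximate Borel construction $\mathcal M_{k+1}\times_G EG(N)$, define $\fm^{G,N}_{k,\beta}$ by pull-up--push-down as in \eqref{form31100}, sum over $\beta$ with Novikov weights, and pass to the $N\to\infty$ limit. The extra details you supply (Stokes on codimension-one strata for the $A_\infty$-relations, homological perturbation to pass to cohomology, the forgetful map for diagram commutativity) are all implicit in the paper's shorter sketch and consistent with it.
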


\begin{proof}[Sketch of the proof]
        For the simplicity of exposition, assume that the immersed Lagrangians are the same as each other. 
        We use the equivariant Kuranishi structure on the space 
        $\mathcal M_{k+1}(L;\beta)$ \cite{FuFu5} and an approximation of the universal principal $G$-bundle $EG \to BG$ to obtain:
        \begin{equation}
                \xymatrix{
                &&\mathcal M_{k+1}(L;\beta) \times_G EG(N)  \ar[dl]^{({\rm ev}_1,\dots,
                {\rm ev}_k)}\ar[dr]_{{\rm ev}_0}   \\
                &(\widetilde L\times_X\widetilde L)^k \times_G EG(N) \ar[d]
                && 
                L \times_G EG(N) \\
                &((\widetilde L\times_X\widetilde L) \times_G EG(N))^k
                }
                \nonumber
        \end{equation}
        where ${\rm ev}_i$ for $0\leq i\leq k$, is the evaluation map at the $i$-th marked point. 
        By a formula similar to (\ref{form31100}), we can define operations:
        \[
          \frak m^{G,N}_{k,\beta} : H(L \times_G EG(N))^{\otimes k} \to H(L \times_G EG(N)). 
        \]
        Taking the limit $N\to \infty$, we obtain the operation $\frak m^{G}_{k,\beta}$ between the equivariant cohomology groups.
        Then
        $\frak m^G_k = \sum_{\beta} T^{\omega(\beta)}\frak m^{G}_{k,\beta}$ is the required $A_{\infty}$ operation.
\end{proof}


\begin{definition}
	An element $b \in H^{{\rm odd}}_G(L;\Lambda_0)$ is a {\it $G$-equivariant bounding cochain},
	 if $b$ is divisible by $T^{\epsilon}$ for a positive $\epsilon$ and $b$ satisfies (\ref{formMC}), where
	 $\frak m_k$ is replaced with $\frak m_k^G$.
\end{definition}

The following theorem claims the existence of the $G$-equivariant analogue of $\frak{Fuk}(X,\omega,\fL)$. The geometric content of this theorem is given in Theorem \ref{thm5151}:
\begin{theorem}\label{thm54}
	Let $\fL$ be a clean collection of $G$-equivariant immersed Lagrangian submanifolds of $(X,\omega)$. 
	There exists a (filtered) $A_{\infty}$-category $\frak{Fuk}^G(X,\omega,\fL)$ whose objects are pairs of the form
	$(L,b)$ where $L \in \fL$ and $b$ is a $G$-equivariant bounding cochain. The structural maps $\fm_k^{\vec b,G}$
	are also defined by applying the analogue of the formula of \eqref{modified-m} to the maps $\fm_k^G$.
\end{theorem}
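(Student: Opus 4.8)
The geometric work has already been done in Theorem \ref{thm5151}: it equips the clean family $\fL$ with $H^*_G$-linear operations $\fm_k^G = \sum_\beta T^{\omega(\beta)}\fm^G_{k,\beta}$ on the equivariant morphism groups $CF(\widetilde L_{i-1},\widetilde L_i)=H^*_G(\widetilde L_{i-1}\times_X\widetilde L_i,\Lambda_0)$, and these satisfy the relations \eqref{formula25} with a generally non-zero $\fm_0^G$. Since every non-constant disk has $\omega(\beta)>0$, the class $\fm_0^G$ is divisible by a positive power of $T$, so $\fL$ carries the structure of a \emph{gapped, curved, filtered} $A_\infty$-category over $\Lambda_0^{H^*_G}$. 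The remaining task is the standard algebraic passage from such a structure to an honest filtered $A_\infty$-category by deformation with bounding cochains, following \cite[Chapter 3]{fooobook}, carried out $H^*_G$-linearly. The plan is: (i) deform by bounding cochains and check convergence; (ii) assemble the category and establish independence of the choices; (iii) produce the unit.

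For step (i): given objects $(\widetilde L_i,\iota_i,b_i)$ with each $b_i\in H^{{\rm odd}}_G(\widetilde L_i;\Lambda_0)$ divisible by $T^\epsilon$ and satisfying the Maurer--Cartan equation \eqref{formMC} for $\fm_k^G$, define $\fm_k^{\vec b,G}$ by the evident analogue of \eqref{modified-m}. Two points need verification. First, convergence in the $T$-adic topology: Gromov compactness bounds, for each $E$, the number of homotopy classes $\beta$ with $\omega(\beta)\le E$ for which the relevant moduli spaces $\mathcal M_{k+1}(\widetilde L;\beta)$ are non-empty, and this finiteness survives the Borel construction $\mathcal M_{k+1}(\widetilde L;\beta)\times_G EG(N)$; since inserting factors $b_i$ only raises the $T$-power, the coefficient of each power of $T$ in $\fm_k^{\vec b,G}$ is a finite $H^*_G$-combination, so the series converges in $\Lambda_0^{H^*_G}$. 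Second, the deformed relations: the term $\fm_0^{\vec b,G}$ is exactly the left-hand side of \eqref{formMC} for $\vec b$, hence vanishes, and the higher $A_\infty$-relations for $\fm_k^{\vec b,G}$ reduce to those for $\fm_k^G$ by the usual rearrangement of the sums in \eqref{modified-m}; this is a formal, $H^*_G$-linear manipulation, unchanged from the non-equivariant case.

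For step (ii): define $\frak{Fuk}^G(X,\omega,\fL)$ with objects the pairs $(\widetilde L,b)$, morphism module $CF(\widetilde L_0,\widetilde L_1)$ between $(\widetilde L_0,b_0)$ and $(\widetilde L_1,b_1)$, and composition maps $\fm_k^{\vec b,G}$. One then checks, as $G$-equivariant transcriptions of \cite{fooobook,fooobook2,AFOOO,AJ}, that gauge-equivalent bounding cochains give isomorphic objects and that the category is well defined up to filtered $A_\infty$-homotopy equivalence independently of the auxiliary data --- the equivariant Kuranishi structures of \cite{FuFu5}, the continuous families of perturbations, the approximations $EG(N)$, and the chain-level model for equivariant cohomology --- all compatibly with the $H^*_G$-module structure; the commuting square of Theorem \ref{thm5151} then passes to the $\fm_k^{\vec b,G}$ and records compatibility with $\frak{Fuk}(X,\omega,\fL)$. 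The equivariant fundamental class of $\widetilde L$ supplies the unit: a strict unit on the canonical model of \cite{fooobook}, a homotopy unit in general.

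The genuinely analytic obstacles have been absorbed into Theorem \ref{thm5151}, so the principal difficulty that remains is organizational. One must fix a \emph{single} coherent system of $G$-equivariant multisections (or continuous families of perturbations) on the moduli spaces $\mathcal M_{k+1}(\widetilde L;\beta)$ simultaneously for all arities $k$ and all tuples drawn from $\fL$, consistent with the boundary decompositions into fiber products --- this is where the cleanness hypothesis enters, and, as in the immersed non-equivariant setting \cite{AJ}, it is the delicate step. At the same time one must control the interplay of the Novikov filtration with the cohomological degree in the (possibly infinite-dimensional) ring $H^*_G$ so that the Borel construction does not spoil either convergence or $H^*_G$-linearity. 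We expect the coherent choice of equivariant perturbation data for the whole collection to be the main technical point.
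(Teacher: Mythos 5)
Your proposal is essentially the paper's own (unstated) argument: the authors explicitly say the geometric content of Theorem~\ref{thm54} is all in Theorem~\ref{thm5151}, after which the passage to an honest filtered $A_\infty$-category is the standard bounding-cochain deformation of \cite[Chapter 3]{fooobook} transcribed $H^*_G$-linearly, exactly as you lay out. Your identification of the coherent choice of $G$-equivariant perturbation data across all arities and tuples in $\fL$ as the main organizational point, and of the $T$-adic divisibility of $b$ plus Gromov compactness as the convergence mechanism, matches what the paper intends.
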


The above theorem can be regarded as the generalization of the results of Section \ref{sec:equivariant} on $G$-equivariant Lagrangian Floer homology. Similarly, the techniques of \cite{DF} can be used to extend the results of Section \ref{sec:divisor complement}. More generally, we 
can also consider $A_\infty$-category associated to $G$-invariant Lagrangians in the complement of a smooth divisor:

\begin{theorem}\label{cat-div-comp}
	Let $(X,\omega)$ be a symplectic manifold with a Lie group $G$ acting on $X$ by symplectomorphisms. 
	Let $D$ be a $G$-invariant smooth divisor in $X$ such that $X$ admits a K\"ahler 
	structure in a neighborhood of $D$ compatible with the symplectic form 
	$\omega$. Let $\fL$ be a clean collection of $G$-equivariant immersed Lagrangian submanifolds of $(X,\omega)$. 
	\begin{enumerate}
		\item[(i)] There are operations:
			\begin{equation}\label{div-equiv-Ainfty}
			    \fm_k^G : \bigotimes_{i=1}^k CF(\widetilde L_{i-1},\widetilde L_i) \to CF(\widetilde L_{0},\widetilde L_k) 
			\end{equation}
			for any sequence $\{(\widetilde L_{i},\iota_i)\}_{0\leq i \leq k}$ of the elements of $\fL$. 
			These operations satisfy $A_\infty$-relations.
		\item[(ii)] There exists a (filtered) $A_{\infty}$-category $\frak{Fuk}^G(X\backslash D,\omega,\fL)$ whose objects are pairs of the form
			$(L,b)$ where $L \in \fL$ and $b$ is a bounding cochain in $CF(\widetilde L,\widetilde L)$ with respect to the operators defined in
			Item {\rm (i)}. The structural operations of $\frak{Fuk}^G(X\backslash D,\omega,\fL)$ are given by modifications of 
			the operators in \eqref{div-equiv-Ainfty} as in \eqref{modified-m}.
	\end{enumerate}
\end{theorem}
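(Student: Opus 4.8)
The plan is to run the equivariant construction of Theorem \ref{thm5151} while replacing the stable map compactification of the moduli spaces of holomorphic disks by the relative Gromov--Witten (log) compactification used in the proof of Theorem \ref{mainthm}, so that holomorphic disks meeting $D$ are removed from the relevant moduli spaces. First I would fix a $G$-equivariant almost complex structure $J$ compatible with $\omega$ which, near $D$, agrees with a $G$-invariant K\"ahler structure (such a $J$ exists since the relevant $G$-invariant data form a convex set, and the given K\"ahler structure near $D$ may be averaged over $G$). For each homotopy class $\beta$ of disks in $X\setminus D$ with boundary on the immersed Lagrangian, I would form the moduli space ${\mathcal M}^{\rm RGW}_{k+1}(\widetilde L;\beta)$ of RGW-stable $J$-holomorphic disks with $k+1$ boundary marked points, whose boundary lies on $\iota(\widetilde L)$ and lifts to $\widetilde L$ away from the marked points, and with no component contained in $D$, exactly as in \cite{DF}. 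Since $G$ preserves $\omega$, $J$ and $D$, the $G$-action lifts to this moduli space; following \cite{FuFu5} (as in the proof of Theorem \ref{thm5151}), and using the clean immersed framework of \cite{AJ}, one obtains a $G$-equivariant Kuranishi structure with corners on ${\mathcal M}^{\rm RGW}_{k+1}(\widetilde L;\beta)$ whose evaluation maps ${\rm ev}_0,\dots,{\rm ev}_k$ to the fiber products $\widetilde L_{i-1}\times_X\widetilde L_i$ are $G$-equivariantly weakly submersive.

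The key structural input, inherited from the proof of Theorem \ref{mainthm}, is that in the RGW compactification the only codimension-one boundary strata are the fibered products
\[
  {\mathcal M}^{\rm RGW}_{k_1+1}(\widetilde L;\beta_1)\times_{\widetilde L_{a}\times_X\widetilde L_b}{\mathcal M}^{\rm RGW}_{k_2+1}(\widetilde L;\beta_2)
\]
coming from breaking a boundary node of the disk, with $k_1+k_2=k+1$, $\beta_1+\beta_2=\beta$; there is no contribution from a disk bubble that hits $D$, because such configurations are separated from the type (A) limits by the RGW structure (this is exactly the phenomenon illustrated in Figure \ref{zu0}). Next I would approximate $EG\to BG$ by finite-dimensional $EG(N)\to BG(N)$, form the approximate Borel constructions ${\mathcal M}^{\rm RGW}_{k+1}(\widetilde L;\beta)\times_G EG(N)$, choose a compatible system of $G$-invariant CF-perturbations on these Kuranishi structures compatible with the boundary description above, and define $\fm_{k,\beta}^{G,N}$ by pull-up along $({\rm ev}_1,\dots,{\rm ev}_k)$ followed by push-down (integration along the fiber) along ${\rm ev}_0$, exactly as in \eqref{modified-m}'s underlying formula. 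Passing to the limit $N\to\infty$ gives $\fm_{k,\beta}^{G}$ on equivariant cohomology, and $\fm_k^G:=\sum_\beta T^{\omega(\beta)}\fm_{k,\beta}^{G}$ is the operation \eqref{div-equiv-Ainfty}; the codimension-one boundary formula yields the $A_\infty$-relations, and the commuting square with the non-equivariant $\fm_k$ follows from $EG(N)\to {\rm pt}$. This establishes part (i). Part (ii) is then purely algebraic, as in Theorem \ref{thm54}: a $G$-equivariant bounding cochain is $b\in H^{\rm odd}_G(\widetilde L;\Lambda_0)$ divisible by $T^\epsilon$ and solving \eqref{formMC} for $\fm^G$; objects of $\frak{Fuk}^G(X\setminus D,\omega,\fL)$ are pairs $(L,b)$, morphisms are $CF(\widetilde L_0,\widetilde L_1)$, and the deformed operations $\fm_k^{\vec b,G}$ obtained from \eqref{modified-m} satisfy the honest (uncurved) $A_\infty$-relations by the standard bounding-cochain computation, making $\frak{Fuk}^G(X\setminus D,\omega,\fL)$ a filtered $A_\infty$-category.

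The main obstacle is the simultaneous control of three compactness/transversality issues, all of which already appear individually in the proofs quoted above but must here be combined: equipping the relative Gromov--Witten compactification of \cite{DF} with a $G$-equivariant Kuranishi structure whose corner structure is compatible with both the $G$-action and the clean immersed boundary condition of \cite{AJ}; keeping the evaluation maps $G$-equivariantly weakly submersive so that fiberwise integration in the Borel construction is well defined; and verifying that the separation of type (A) from type (B) degenerations survives in the equivariant, multiply-marked, immersed setting, so that the codimension-one boundary is genuinely exhausted by the fibered products displayed above (and hence the $A_\infty$-relations hold with no anomalous terms). Once this equivariant RGW package is in place, the remainder is a routine merge of the arguments already sketched for Theorems \ref{thm5151} and \ref{mainthm}.
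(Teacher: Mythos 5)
The paper states Theorem \ref{cat-div-comp} without an explicit proof, presenting it as the evident merge of Theorem \ref{thm5151} (equivariant $\fm_k^G$ via Borel--Kuranishi), Theorem \ref{mainthm} (RGW compactification in the divisor complement), and the bounding-cochain formalism of Theorem \ref{thm54}/\cite{AJ}, and your sketch assembles exactly those ingredients in exactly that order, so the approach matches the authors' intent. Two cautions on details. First, the RGW compactification is not characterized by \emph{excluding} components mapped into $D$; as in relative Gromov--Witten theory it \emph{does} include such components, decorated with the relative (rubber) data that separates the type (A) limits from the type (B) limits -- you say this correctly a few sentences later, but the earlier phrasing ``with no component contained in $D$'' is inaccurate and could mislead a reader about what the compactification contains. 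Second, averaging an integrable complex structure over $G$ does not produce an integrable one, so the parenthetical claim that the K\"ahler structure near $D$ ``may be averaged over $G$'' is not a valid way to obtain a $G$-invariant K\"ahler structure; one should either strengthen the hypothesis to require the K\"ahler structure near $D$ to be $G$-invariant (which is what the intended applications in Sections \ref{sec:extendedmoduli} and \ref{sec:generalgroup} supply, via the $G$-equivariant K\"ahler model for the parabolic moduli spaces along the cut locus), or else isolate exactly which features of the K\"ahler normal form the RGW analysis actually uses -- $J$-holomorphicity of $D$ and a $G$-invariant tubular model -- and check that these survive averaging. Neither point changes the architecture of your argument, but the second should be addressed before part (i) can be regarded as complete.
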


\begin{conjecture}\label{cong59}
	Theorems \ref{mainthm} and \ref{cat-div-comp} still hold in the case that $D$ is a normal crossing divisor with respect to a K\"ahler structure in
	a neighborhood of $D$ which is compatible with $\omega$.
\end{conjecture}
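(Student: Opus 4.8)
The plan is to replace the relative Gromov--Witten compactification ${\mathcal M}^{\rm RGW}$ used in the proofs of Theorems \ref{mainthm} and \ref{cat-div-comp} by its normal crossing analogue, and then to rerun those arguments. Write $D=\bigcup_{j=1}^{m}D_j$ for the decomposition of $D$ into smooth components and, for $I\subseteq\{1,\dots,m\}$, let $D_I=\bigcap_{j\in I}D_j$ be the corresponding stratum (with $D_\emptyset=X$). The first step is to construct, for each $p,q\in L_0\cap L_1$ and each relative class $\beta$ --- now carrying a contact order with each branch $D_j$ --- a compactification ${\mathcal M}^{\rm RGW}(p,q;\beta;L_0,L_1)$ of $\overset{\circ}{\mathcal M}(p,q;\beta;L_0,L_1)$ whose elements are stable maps into expansions of $X$ along the strata $D_I$: near a point of $D_I$ one uses the $|I|$-fold expanded degeneration (equivalently a logarithmic or exploded target), so that a component of a limiting curve that would sink into $D_I$ in the ordinary stable map compactification is instead recorded as a curve in a rubber piece, matched along the exceptional divisors. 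This is precisely the picture of \cite{LR,IP,JL,JL2,groSie,pa,TZ} extended from smooth divisors to normal crossing ones; the logarithmic Gromov--Witten formalism of Gross--Siebert and Abramovich--Chen and the symplectic treatments of Ionel, Parker, and Tehrani--Zinger supply the template.

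Granting such a compactification, the remaining steps mirror the smooth case. First one shows that ${\mathcal M}^{\rm RGW}(p,q;\beta;L_0,L_1)$ carries a Kuranishi structure with corners, the obstruction theory at an expanded configuration being assembled from the obstruction spaces of the pieces together with the gluing parameters at the nodes and at the contacts with the exceptional divisors; the gluing analysis is the one familiar from relative Gromov--Witten theory, now iterated over the tower of expansions attached to the stratification. Second, one identifies the codimension-one boundary: it consists only of the strip-breaking strata ${\mathcal M}^{\rm RGW}(p,r;\beta_1;L_0,L_1)\times{\mathcal M}^{\rm RGW}(r,q;\beta_2;L_0,L_1)$ as in \eqref{splitend}, while the strata in which a new rubber level is created carry an extra free $\R$- or torus-action and hence have virtual codimension at least two, so they contribute nothing to the differential or to $\partial\circ\partial$. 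The monotonicity point survives unchanged: the perturbed moduli space retains only configurations whose principal disk component maps into $X\setminus D$, and since \eqref{ineq32} is assumed for classes in $H_2(X\setminus D,L)$, a disk carrying sphere bubbles meeting $D$ --- a type-(B) configuration in the language of Section \ref{sec:divisor complement} --- cannot occur in a zero- or one-dimensional stratum. With these properties in hand, $\partial\circ\partial=0$, Hamiltonian-isotopy invariance, and the statements about $\Q$-coefficients and the spectral sequence go through as before, yielding Theorem \ref{mainthm} for normal crossing $D$. For Theorem \ref{cat-div-comp} one repeats the construction with the moduli spaces $\mathcal M_{k+1}(\widetilde L;\beta)$ of disks with boundary marked points in the complement of the normal crossing divisor, reads off the $A_\infty$-relations \eqref{formula25} from the codimension-one boundary, and in the equivariant case runs everything fibrewise over the approximations $EG(N)\to BG(N)$ exactly as in the proof of Theorem \ref{thm5151}; the $G$-invariance of the almost complex structure and of each $D_j$ makes the whole expansion construction $G$-equivariant.

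The main obstacle is the first step: producing the normal crossing analogue of ${\mathcal M}^{\rm RGW}$ as a space with the virtual structure needed here --- a Kuranishi structure with corners whose codimension-one boundary is exactly \eqref{splitend} --- and in particular proving the gluing theorem at the deep strata $D_I$ with $|I|\ge 2$, where several expansion directions interact and the combinatorics of admissible contact data and of the level structure become intricate. Already in the smooth case this is a substantial analytic input; the normal crossing case is the content of the work in progress referred to in Remark \ref{normal-crossing}, and I expect the bookkeeping of contact orders along the strata (the discrete data labelling the boundary faces of the Kuranishi structure) and the compatibility of the chosen multisections across the resulting stratification to be the delicate part, rather than the Floer-theoretic formalism, which does not see whether $D$ is smooth or normal crossing once a compactification with the correct boundary is available. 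Finally, I note that for the application envisaged in Remark \ref{inv}, where $D=(D_1\times X_2)\cup(X_1\times D_2)$ is of product form, one can bypass the general construction by using instead the compactification $\mathcal M'$ of \cite[Section 12]{FuFu50}, in which the sphere bubbles on the two factors are expanded independently; this reduces that case to two copies of the smooth-divisor theory of Section \ref{sec:divisor complement} and does not require Conjecture \ref{cong59} in full strength.
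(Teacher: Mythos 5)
The statement you are addressing is a \emph{conjecture}; the paper offers no proof of it. What the paper does provide are two remarks: Remark~\ref{normal-crossing}, which suggests that the compactification used in relative Gromov--Witten theory for normal crossing complements (the logarithmic/exploded/expanded-degeneration picture in \cite{groSie} and its symplectic cousins) should be the right tool but warns that the normal-crossing case is ``much more subtle'' than the smooth-divisor case and that a similar subtlety should appear on the Floer side; and Remark~\ref{inv}, which observes that for the product divisor $(D_1\times X_2)\cup(X_1\times D_2)$ one can sidestep the normal-crossing theory by using the compactification $\mathcal M'$ of \cite[Section 12]{FuFu50}. Your proposal is, in substance, a more detailed and more explicit version of exactly this expected strategy: build the normal-crossing analogue of $\mathcal M^{\rm RGW}$ from expansions along the strata $D_I$, equip it with a Kuranishi structure with corners whose codimension-one boundary is precisely \eqref{splitend}, and rerun the Floer-theoretic and $A_\infty$ arguments of Sections~\ref{sec:divisor complement} and~\ref{Sec:ainfinity}. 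You also flag the same bottleneck that the authors flag --- the gluing theory at the deep strata where several expansion directions interact --- and you close with the same observation about the product case from Remark~\ref{inv}. On those points you are fully in line with the paper.

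One piece of your reasoning is, however, imprecise and worth fixing if you turn this into an argument. You write that the strata in which a new rubber level is created ``carry an extra free $\R$- or torus-action and hence have virtual codimension at least two.'' This is not the correct mechanism, even in the smooth-divisor case. Each extra rubber level is a (real) codimension-one stratum of the compactified moduli space: the quotient by the $\C^*$- (or $\R_{>0}$-) action on a rubber level removes the dimension that was added by the expanded direction, and what one is left with is a wall, not a corner of depth two. The reason these strata do not appear as codimension-one \emph{boundary} faces of the Kuranishi structure is the gluing theorem for expanded degenerations: the neck-length (or level-smoothing) parameter furnishes a collar across the rubber stratum, so it sits in the interior. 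This is precisely the content of property (I) in the sketch of the proof of Theorem~\ref{mainthm}, and it is the analytically hard input you rightly identify as the main obstacle; invoking a dimension count in its place hides the difficulty rather than discharging it. The expected conclusion (only strip-breaking contributes to \eqref{splitend}) is correct, but it is a consequence of the gluing analysis, not of a free-action codimension argument.
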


\begin{remark} \label{normal-crossing}
	As in Section \ref{sec:divisor complement}, we need to use a non-standard compactification of 
	the moduli spaces of pseudo-holomorphic disks in $X \setminus D$ to prove Theorem \ref{cat-div-comp}. 
	It is plausible that the compactification appearing in relative Gromov-Witten theory for the complements of normal crossing divisors \cite{groSie}
	can be employed to prove Conjecture \ref{cong59}. 	The analysis of Gromov-Witten theory for the complement of normal crossing
	divisors is much more subtle than the case of the complements of smooth divisors, and we would expect that a similar phenomena happens 
	in the construction of Lagrangian Floer homology and the category $\frak {Fuk}(X\backslash D,\omega)$.
\end{remark}


\newtheorem{conds}[theorem]{Condition}

\section{Cut-down Extended Moduli Spaces for Other Lie Groups}
\label{sec:generalgroup}

In this section, we discuss a plan to address the conjectures in Section \ref{Sec:conjectures}. Fix a Lie group $G$ as in Section \ref{sec:extendedmoduli} and let $T$ be a maximal torus of $G$ whose Lie algebra is denoted by $\ft$. The Lie group $G$ acts on $\frak g$ and the quotient space can be identified with the quotient $\ft/W$ of $\frak t$ by the Weyl group $W$. Let $\ft^+\subset \ft$ be a Weyl chamber of $G$. Then $\ft^+$ is a fundamental domain for the action of the Weyl group on $\frak t$, i.e., we can identify $\ft/W$ with $\ft^+$. The quotient map from $\fg$ to $\ft^+$ is denoted by $Q$. We will also write $\ft_\Z$ for the integer lattice in $\ft$. Thus $T$ is equal to the quotient $\ft/\ft_\Z$. The dual lattice of $\ft_\Z$ is denoted by $\ft_{\Z}^*$. The action of the Weyl group on $\ft$ induces actions of this group on the lattices $\ft_\Z$ and $\ft_{\Z}^*$. For a finite subset $\mathcal N=\{\alpha_1,\dots,\alpha_n\}$ of $\frak t_{\Z}^*$, we define:
\begin{equation} \label{Delta}
	\overset{\circ}\Delta_{\mathcal N}(\epsilon) = \{\xi \in \frak t \mid \forall \alpha \in \mathcal N, \alpha(\xi) < \epsilon \}
\end{equation}
Let $\Delta_{\mathcal N}(\epsilon)$ be the closure of $\overset{\circ}\Delta_{\mathcal N}(\epsilon)$. The intersection of these open and closed polytopes with the Weyl chamber $\ft^+$ is denoted by $\Delta_{\mathcal N}^+(\epsilon)$ and $\overset{\circ\hspace{3mm}}{\Delta_{\mathcal N}^+}(\epsilon)$. 
\begin{conds}\label{cond92} 
	The set $\mathcal N$ is required to satisfy the following conditions:
	\begin{enumerate}
		\item $\mathcal N$ is invariant with respect to the action of the Weyl group.
		\item $\Delta_{\mathcal N}(\epsilon)$ is compact.
		\item If $\alpha \in \mathcal N$, $\xi \in \Delta_{\mathcal N}(\epsilon)$,  $\sigma \in W$
			satisfy $\alpha(\xi) = \epsilon$ and $\sigma(\xi) = \xi$, then $\sigma(\alpha) = \alpha$.
		\item For any vertex $v$ of $\Delta_{\mathcal N}(\epsilon)$ the set 
			$\{\alpha \in \mathcal N \mid \alpha(v) = \epsilon\}$ is a $\Z$ basis of $\frak t_{\Z}^*$.
\end{enumerate}
\end{conds}

\begin{example}
	For $G = {\rm PU}(3)$, we can assume that $\ft$ is the set of diagonal matrices with diagonal entries 
	$(2\pi\bi\theta_0,2\pi\bi\theta_1,2\pi\bi\theta_2)$ with $\sum \theta_i = 0$.
	Suppose $\alpha_i\in \ft_\Z$ is the map that assigns $\theta_i$ to a diagonal matrix of this form.
	We may take $\mathcal N$ to be the set that consists of $\theta_i$ and $-\theta_i$ for $i=1,2,3$. 
	The Wely chamber and the set $\Delta_{\mathcal N}(\epsilon)$ is demonstrated in Figure \ref{zu3}.
	\begin{figure}[h]
		\centering
		\includegraphics[scale=0.3]{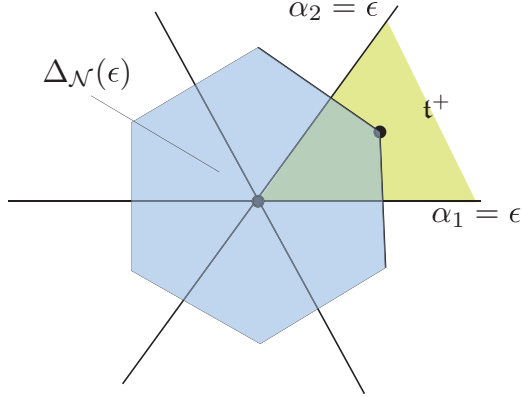}
		\caption{Wely chamber and symplectic cut}
		\label{zu3}
	\end{figure}
\end{example}

\begin{remark}
        A set of similar conditions for polytopes in $\ft^+$ are introduced by Woodward in \cite{Wo20}.
        For example, Condition \ref{cond92} (2) implies the requirements of \cite[Definition 1.1]{Wo20}. 
        Condition \ref{cond92} (4) also asserts that $\Delta_{\mathcal N}^+(\epsilon)$ is Delzant. (See \cite[page 5]{Wo20}.)
        However, our requirements are more restrictive. For example, we only consider polytopes in $\ft^+$
        which contain the origin. In fact, our definition is essentially the same as outward-positive condition in \cite{MT}.
\end{remark}

For the following proposition, let $\widehat{R}(\Sigma,\mathcal F,p)$ and $\mu : \widehat{R}(\Sigma,\mathcal F,p) \to \frak g$ be given as in \eqref{form2424}. This proposition is a consequence of well-established results on non-abelian symplectic cut \cite{Wo20,Mei,MT}:

\begin{prop}\label{prop93}
	Let $\epsilon$ be a positive real number. If $\epsilon$ is small enough, then
	there exists a compact symplectic manifold $\widehat{R}(\Sigma,\mathcal N,\mathcal F,p;\epsilon)$ 
	with a Hamiltonian $G$ action and a moment map $\widehat \mu:\widehat{R}(\Sigma,\mathcal N,\mathcal F,p;\epsilon) \to \fg$, 
	which satisfies the following properties:
	\begin{enumerate}
		\item[(i)] The image of the map $Q \circ \widehat \mu$ is equal to $\Delta^+_{\mathcal N}(\epsilon)$.
		\item[(ii)] The open subspaces $(Q\circ \mu)^{-1}(\overset{\circ\hspace{3mm}}{\Delta_{\mathcal N}^+}(\epsilon))$ and 
				$(Q\circ \widehat \mu)^{-1}(\overset{\circ\hspace{3mm}}{\Delta_{\mathcal N}^+}(\epsilon))$ are symplecomorphic.
	\end{enumerate}
\end{prop}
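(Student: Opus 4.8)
The plan is to realize $\widehat{R}(\Sigma,\mathcal N,\mathcal F,p;\epsilon)$ as a \emph{non-abelian symplectic cut} of the extended moduli space $\widehat{R}(\Sigma,\mathcal F,p)$ along the polytope $\Delta_{\mathcal N}(\epsilon)$, following the construction of \cite{Wo20,Mei,MT}. First I would recall that, by Proposition \ref{symp-ext}, there is a $G$-invariant neighborhood $\fU$ of $0$ in $\fg$ on which $\widehat{R}(\Sigma,\mathcal F,p)$ carries a symplectic form with Hamiltonian $G$-action and moment map $\mu$. The first step is to verify that, for $\epsilon$ small enough, $\Delta_{\mathcal N}(\epsilon)\subset \fU\cap\ft$ (using that $\Delta_{\mathcal N}(\epsilon)$ is compact, contains the origin, and shrinks to $\{0\}$ as $\epsilon\to 0$, which is Condition \ref{cond92}(2)), so that $\mu^{-1}(G\cdot\Delta_{\mathcal N}^+(\epsilon))$ is a well-defined $G$-invariant subset of the symplectic locus. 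The Weyl-invariance in Condition \ref{cond92}(1) guarantees that $G\cdot\Delta_{\mathcal N}^+(\epsilon) = Q^{-1}(\Delta_{\mathcal N}^+(\epsilon))$ is a genuine $G$-invariant closed subset of $\fg$, so this preimage makes sense.

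Next I would carry out the cut. The polytope $\Delta_{\mathcal N}(\epsilon)$ is defined by the inequalities $\alpha(\xi)\le\epsilon$ for $\alpha\in\mathcal N$; associated to the lattice vectors $\{\alpha\}$ there is a torus $T_{\mathcal N}\cong (S^1)^{|\mathcal N|}$ acting on $\widehat{R}(\Sigma,\mathcal F,p)\times\C^{|\mathcal N|}$ in Hamiltonian fashion (the $\C^{|\mathcal N|}$ factor carrying the negative standard form), exactly as in the abelian symplectic cut construction, with moment map components $\xi\mapsto\alpha(\mu(\xi))+\tfrac12|z_\alpha|^2-\epsilon$. Conditions \ref{cond92}(3) and (4) — the compatibility of stabilizers with the faces, and the Delzant condition at vertices — are precisely what is needed to ensure that $\epsilon$ (equivalently $0$ for the shifted moment map) is a regular value of this $T_{\mathcal N}$-moment map on the relevant locus and that the $T_{\mathcal N}$-action is free there, so that the symplectic reduction
\[
	\widehat{R}(\Sigma,\mathcal N,\mathcal F,p;\epsilon) := \big(\widehat{R}(\Sigma,\mathcal F,p)\times\C^{|\mathcal N|}\big)/\!\!/_{\!\epsilon}\, T_{\mathcal N}
\]
is a smooth symplectic manifold. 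The residual Hamiltonian $G$-action descends (it commutes with $T_{\mathcal N}$ because $G$ acts trivially on the $\C^{|\mathcal N|}$-factor and $\mu$ is $G$-equivariant), giving $\widehat\mu$; a standard argument identifies the image of $Q\circ\widehat\mu$ with $\Delta_{\mathcal N}^+(\epsilon)$, yielding (i). For (ii), over the open polytope $\overset{\circ\hspace{3mm}}{\Delta_{\mathcal N}^+}(\epsilon)$ all defining inequalities are strict, so on that open locus the cut does nothing but a trivial reduction — the $z_\alpha$ are forced away from zero and can be gauged to fixed phases — giving the canonical symplectomorphism with $(Q\circ\mu)^{-1}(\overset{\circ\hspace{3mm}}{\Delta_{\mathcal N}^+}(\epsilon))$. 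Finally, compactness of $\widehat{R}(\Sigma,\mathcal N,\mathcal F,p;\epsilon)$ follows from compactness of $\Delta_{\mathcal N}(\epsilon)$ together with properness of the reduction, since the moduli space fibers over the compact polytope with compact fibers.

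The main obstacle I expect is \emph{verifying that $\Delta_{\mathcal N}(\epsilon)$ actually lies inside the symplectic neighborhood $\fU$ and that the cut stays within the locus where Proposition \ref{symp-ext} provides a genuine symplectic structure}, rather than running into the singular reducible locus of the extended moduli space. This is exactly why the "if $\epsilon$ is small enough" hypothesis is needed, and it requires a quantitative understanding of how large $\fU$ can be taken — in the $\SU(2)$ case this is the statement that one can take $\fU=\Delta(\theta_0)$ with $\theta_0\le\tfrac12$, and for general $G$ one needs the analogous bound from \cite{MW}-type analysis of the extended moduli space near $\mu^{-1}(0)$. A secondary technical point is checking that the edge/vertex conditions \ref{cond92}(3)--(4) translate into freeness of the cutting torus action even at points where the $G$-stabilizer jumps (i.e.\ near the reducible connections); here one must combine the stabilizer-compatibility condition with the local normal form for the $G$-action, which is where the work of \cite{Mei,MT} on non-abelian cuts is essential.
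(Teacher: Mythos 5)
Your overall aim — construct $\widehat{R}(\Sigma,\mathcal N,\mathcal F,p;\epsilon)$ as a non-abelian symplectic cut of the extended moduli space along $\Delta_{\mathcal N}(\epsilon)$ — is the same as the paper's, and your opening paragraph about fitting $\Delta_{\mathcal N}(\epsilon)$ inside the neighborhood $\fU$ of Proposition~\ref{symp-ext} when $\epsilon$ is small is correct and necessary. But your central step, the single \emph{global} symplectic reduction
\[
\bigl(\widehat{R}(\Sigma,\mathcal F,p)\times\C^{|\mathcal N|}\bigr)/\!\!/\,T_{\mathcal N},
\]
has a real gap. The moment map components you propose, $\xi\mapsto\alpha(Q(\mu(\xi)))+\tfrac12\lvert z_\alpha\rvert^2-\epsilon$ (with $Q$ inserted as it must be, since $\mu$ takes values in $\fg$ and $\alpha\in\ft^*_\Z$), are $G$-invariant but \emph{not smooth}: the map $Q\colon\fg\to\ft^+$ is only smooth where its image is in the interior of the Weyl chamber, so $\alpha\circ Q\circ\mu$ fails to be differentiable along the preimage of the chamber walls. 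In particular there is no global Hamiltonian $T_{\mathcal N}$-action on $\widehat{R}(\Sigma,\mathcal F,p)\times\C^{|\mathcal N|}$ to reduce by, and the sentence ``it commutes with $T_{\mathcal N}$ because $G$ acts trivially on the $\C^{|\mathcal N|}$-factor'' presupposes exactly the action whose existence is in question. This is not a technicality left to Conditions~\ref{cond92}(3)--(4); it is the reason the non-abelian cut is not just a copy of Lerman's abelian polytope cut.

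The paper's proof is deliberately \emph{local}: at a point $x\in\mu^{-1}(\Delta_{\mathcal N}(\epsilon))$ where exactly $\alpha_1,\dots,\alpha_k$ are active, the functions $\Phi_i=\alpha_i\circ\mu$ are defined on the slice $\mu^{-1}(\ft)$ and then extended smoothly to a $G$-saturated neighborhood $U_x$ (this uses the local cross-section structure, with Condition~\ref{cond92}(3) keeping the active $\alpha_i$ consistent under the residual stabilizer). One then cuts in the chart $U_x\times\C^k$ by a \emph{local} $T^k$-action with moment map $f=(f_1,\dots,f_k)$, uses Condition~\ref{cond92}(4) (cited to \cite[Proposition 6.2]{Wo20}) to see that $f^{-1}(0)/T^k$ is smooth, and glues the resulting charts $\overline U_x$. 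The global object never appears as a single reduction. If you want to salvage your argument you could first pass to a symplectic cross-section over the interior of $\ft^+$ where a genuine $T$-action exists, but then you would still have to handle the walls and the gluing -- which is exactly the chart-by-chart construction the paper carries out. Your observations about $\epsilon$-smallness, about the role of Weyl-invariance of $\mathcal N$, and about compactness over the compact polytope are all sound once that restructuring is made.
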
	
	
\begin{proof}[Sketch of the proof]
	Let $\widehat{R}(\Sigma,\mathcal N,\mathcal F,p;\epsilon)_0$ denote the subspace 
	$(Q\circ \mu)^{-1}(\overset{\circ\hspace{3mm}}{\Delta_{\mathcal N}^+}(\epsilon))$ of $\widehat{R}(\Sigma,\mathcal N,\mathcal F,p)$.
	According to Proposition \ref{symp-ext}, this space has a symplectic structure if $\epsilon$ is small enough. 
	We compactify this space into a closed symplectic manifold in the following way. 
	Given $x \in \mu^{-1}(\Delta_{\mathcal N}(\epsilon))$, we may assume without loss of generality that there is $k$ such that:
	\[
	  \alpha_i(\mu(x)) = \epsilon
	\]
	if and only if $1\leq i\leq k$.
	The map $\Phi_i:\mu^{-1}(\ft)\to \R$ defined as:
	\[
	  \Phi_i(y)=\alpha_i(\mu(y))
	\]
	can be extended as a smooth function to a neighborhood $U_x\subset \widehat{R}(\Sigma,\mathcal F,p)$ of $x$. We define 
	a real-valued function $f_i$ on $U_x \times \C^k$ as follows:
	\[
	  f_i(y,\xi_1,\dots,\xi_k)  =  \epsilon^2 - \phi_i(y) - \vert \xi_i\vert^2. 
	\]
	Then the function $f:=(f_1,\dots,f_k):U_x \times \C^k\to \R^k$ is the moment map for a Hamiltonian action of $T^k$ on $U_x\times \C^k$.
	The symplectic quotient $f^{-1}(0)/T^k$ contains a dense subset which is symplectomorphic to 
	$\widehat{R}(\Sigma,\mathcal N,\mathcal F,p;\epsilon)_0\cap U_x$. To be a bit more detailed, if we map 
	$y \in \widehat{R}(\Sigma,\mathcal N,\mathcal F,p;\epsilon)_0 \cap U_x$ to the equivalence class of 
	$(y,\xi_1\,\dots,\xi_k)$ where $\xi_i = \sqrt{\vert\epsilon^2 - \Phi_i(y)\vert}$, then we obtain an open embedding of 
	$\widehat{R}(\Sigma,\mathcal N,\mathcal F,p;\epsilon)_0 \cap U_x$ into $f^{-1}(0)/T^k$.
	Condition \ref{cond92} (4) shows that $\overline U_x:=f^{-1}(0)/T^k$ is a smooth manifold. (See \cite[Proposition 6.2]{Wo20}.)
	We can glue $\overline U_x$ for various choices of $x \in \mu^{-1}(\Delta(\epsilon))$ to obtain the desired symplectic manifold 
	$\widehat{R}(\Sigma,\mathcal N,\mathcal F,p;\epsilon)$.
\end{proof}

Let $D$ denote the complement of $\widehat{R}(\Sigma,\mathcal N,\mathcal F,p;\epsilon)_0 $ in $X:=\widehat{R}(\Sigma,\mathcal N,\mathcal F,p;\epsilon)$. We expect that $\widehat{R}(\Sigma,\mathcal N,\mathcal F,p;\epsilon)$ admits a K\"ahler structure compatible with the symplectic structure of $X$ denoted by $\omega$ such that $D$ forms a normal crossing divisor in this neighborhood. Therefore, in the light of Conjecture \ref{cong59}, we make the following conjecture:

\begin{conjecture} 
	There is an $A_\infty$-category ${\frak Fuk}^G(X\backslash D,\omega)$ associated to $(X,\omega)$ and $D$ as above, where the objects of 
	this category form a family of $G$-equivariant immersed Lagrangian submanifolds of $X\backslash D$, and the morphisms of 
	this category are constructed by holomorphic maps to $X$. The homotopy equivalence of this category is independent of 
	$\mathcal N$ and $\epsilon$.
\end{conjecture}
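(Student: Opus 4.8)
\textbf{Construction of ${\frak Fuk}^G(X\setminus D,\omega)$.} The plan is to obtain the category from the $G$-equivariant, normal-crossing version of the divisor-complement machinery of Section~\ref{sec:divisor complement}, and then to establish $(\mathcal N,\epsilon)$-independence by comparing, energy level by energy level, the moduli spaces of holomorphic polygons entering the structure maps. By Proposition~\ref{prop93}, $X=\widehat R(\Sigma,\mathcal N,\mathcal F,p;\epsilon)$ is a compact symplectic manifold with a Hamiltonian $G$-action, and, as stated above, it carries a K\"ahler structure in a neighborhood of $D$ compatible with $\omega$ for which $D$ is a normal crossing divisor (the crossings occur where several of the facet functions $\alpha_i\circ\widehat\mu$ take the value $\epsilon$ simultaneously). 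Fix a clean family $\fL$ of $G$-equivariant immersed Lagrangian submanifolds contained in $X\setminus D$ --- for instance the Lagrangians attached to handlebodies and their images under the functors of Conjecture~\ref{conj21}; by the discussion in Sections~\ref{sec:extendedmoduli} and~\ref{Sec:conjectures} such Lagrangians all lie in a fixed compact region $\{Q\circ\mu<\delta\}$ of the extended moduli space. Now apply the normal-crossing extension of Theorem~\ref{cat-div-comp} supplied by Conjecture~\ref{cong59}: using the relative Gromov--Witten compactification for complements of normal crossing divisors (Remark~\ref{normal-crossing}) and equipping the resulting moduli spaces with $G$-equivariant Kuranishi structures and continuous families of perturbations as in Theorem~\ref{thm5151}, one obtains operations $\fm_k^G$ on $CF(\widetilde L_{i-1},\widetilde L_i)$ satisfying the $A_\infty$-relations \eqref{formula25}; passing to $G$-equivariant bounding cochains as in Theorem~\ref{thm54} yields a filtered $A_\infty$-category $\frak{Fuk}^G(X\setminus D,\omega)=\frak{Fuk}^G(X\setminus D,\omega;\fL)$ over $\Lambda_0^{H^*_G}$.

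\textbf{Invariance under the choices $\mathcal N,\epsilon$.} By Proposition~\ref{prop93}(ii) there is, for each admissible $(\mathcal N,\epsilon)$, a symplectomorphism compatible with the form of Proposition~\ref{symp-ext} between $X_{\mathcal N,\epsilon}\setminus D_{\mathcal N,\epsilon}$ and the open subset $U_{\mathcal N,\epsilon}:=(Q\circ\mu)^{-1}\bigl(\overset{\circ\hspace{3mm}}{\Delta_{\mathcal N}^+}(\epsilon)\bigr)$ of $\widehat R(\Sigma,\mathcal F,p)$; in particular the symplectic form carried by $X_{\mathcal N,\epsilon}$ on $U_{\mathcal N,\epsilon}$ does not depend on $(\mathcal N,\epsilon)$. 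Given two admissible pairs, the Weyl-invariant union $\mathcal N_1\cup\mathcal N_2$ together with a sufficiently small $\epsilon_3$ is again admissible (after a small perturbation of the $\mathcal N_i$ inside a cofinal family, should Condition~\ref{cond92}(3)--(4) need to be restored), and $U_{\mathcal N_1\cup\mathcal N_2,\epsilon_3}\subseteq U_{\mathcal N_1,\epsilon_1}\cap U_{\mathcal N_2,\epsilon_2}$; so it is enough to handle an open symplectic inclusion $U_1\subseteq U_2$ of this kind with $\fL\subset U_1$. The key point is then an a priori confinement estimate: because $Q\circ\mu$ is proper on $(Q\circ\mu)^{-1}(\fU)$ and, near $D$, each $\epsilon^2-\alpha_i\circ\widehat\mu$ plays the role of a squared K\"ahler coordinate transverse to the $i$-th component of $D$, every holomorphic polygon of energy at most $E$ with boundary on $\fL$ and disjoint from the divisor has image in a fixed compact subset $K_E\subset U_1$, independent of the cut. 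Since modulo $T^E$ the structure maps $\fm_k^G$ involve only polygons of energy $<E$, with compatible perturbation data the relevant perturbed RGW-moduli spaces for $U_1$ and $U_2$ coincide modulo $T^E$; letting $E\to\infty$, the inclusion $U_1\hookrightarrow U_2$ induces an isomorphism --- hence in particular a homotopy equivalence --- of filtered $A_\infty$-categories over $\Lambda_0^{H^*_G}$. With the reduction above, this proves independence of $\mathcal N$ and $\epsilon$.

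\textbf{Where the difficulty lies.} Essentially everything hard is concentrated in two places. First, the construction depends on Conjecture~\ref{cong59}: the relative Gromov--Witten compactification, and the accompanying ($G$-equivariant) Kuranishi structures and gluing analysis, for complements of \emph{normal crossing} --- rather than smooth --- divisors are work in progress; alternatively one could try to sidestep them using the product-type compactification of \cite[Section 12]{FuFu50}, as in Remark~\ref{inv}, whenever one can reduce to products of smooth-divisor pairs. Second, the confinement estimate above --- that enlarging the symplectic cut affects the Fukaya-type category only ``beyond'' a bounded region which, by the monotonicity built into Theorem~\ref{cat-div-comp}, already contains every polygon contributing in a given range of the Novikov filtration --- requires genuine analytic work on bounded-energy holomorphic curves in the noncompact extended moduli space and on the behaviour of $Q\circ\mu$ near $D$; this is the technical heart of the invariance statement. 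By contrast, checking that Condition~\ref{cond92} can be maintained along a cofinal family of cuts is a minor combinatorial point.
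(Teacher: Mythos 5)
The paper does not prove this statement: it is offered as a conjecture, motivated only by the expectation that $D$ is a normal-crossing divisor and by Conjecture~\ref{cong59}. So there is no "paper's proof" to compare against, and what matters is whether your proposal is sound on its own terms.

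The construction half follows exactly the route the paper indicates: pass to the extended moduli space, expect a K\"ahler structure near $D$ making $D$ normal crossing, and invoke the normal-crossing version of the divisor-complement machinery (Conjecture~\ref{cong59}, Remark~\ref{normal-crossing}) combined with the equivariant theory of Theorems~\ref{thm5151} and~\ref{thm54}. You correctly flag that this rests on unproven inputs. The $(\mathcal N,\epsilon)$-independence argument, by contrast, is entirely your own addition, and while the confinement idea is reasonable as a heuristic, I would push back harder on two places you treat as routine. First, the reduction to a common refinement: you claim $\mathcal N_1\cup\mathcal N_2$ (after "a small perturbation") can be made admissible, but Condition~\ref{cond92}(4) is a Delzant-type integrality condition on a subset of the \emph{lattice} $\ft^*_{\Z}$; lattice points cannot be perturbed continuously, and the union of two Delzant fans need not have a Delzant common refinement without genuine subdivision. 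Calling this a "minor combinatorial point" is not justified; the existence of a common admissible refinement is itself a nontrivial claim that needs an argument, perhaps via a carefully chosen cofinal family of polytopes rather than arbitrary pairs. Second, the confinement estimate is stated as if properness of $Q\circ\mu$ plus local plurisubharmonicity near $D$ were enough; but what you actually need is a monotonicity/maximum-principle argument preventing sequences of bounded-energy polygons from accumulating on the divisor, and this is exactly the kind of statement whose failure (bubbling \emph{into} $D$) is the reason the RGW compactification of Section~\ref{sec:divisor complement} and Conjecture~\ref{cong59} are needed in the first place. You do identify these as the two hard points, which is to your credit, but the presentation understates how much is being assumed in the reduction step.
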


\begin{conjecture}
	Suppose $\Sigma$ is a connected Riemann surface. Then the $A_\infty$-category has the properties of the 
	category $\fI(\Sigma,\mathcal F)$ in Conjecture \ref{conj21}.
	For a disconnected $\Sigma$, we can take the tensor product of categories associated to the connected components.
\end{conjecture}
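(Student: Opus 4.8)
The plan is to take $\fI(\Sigma,\mathcal F):={\frak Fuk}^G(X\backslash D,\omega)$ with $X=\widehat{R}(\Sigma,\mathcal N,\mathcal F,p;\epsilon)$ and $D$ its complement divisor as above, and then to verify (A-1)--(A-3) of Conjecture \ref{conj21} one at a time. That this category is well defined up to homotopy equivalence (independent of $\mathcal N$ and $\epsilon$) is the content of the preceding conjecture. Its ground ring is $\Lambda_0^{H^*_G}$ by construction, because objects carry $G$-equivariant bounding cochains and the structural operations $\fm_k^{\vec b,G}$ of Theorem \ref{cat-div-comp} are $H^*_G$-linear. Unitality should be inherited from the general theory of \cite{fooobook}: the strict unit of $(L,b)$ is the fundamental cycle of $L$, and the proof that it is a strict unit involves only constant and low-energy holomorphic disks which do not touch $D$, so it survives in both the divisor-complement and the $G$-equivariant refinements.

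For the remaining assertions of (A-1): reversing the orientation of $\Sigma$ induces a diffeomorphism of extended moduli spaces, hence of their non-abelian symplectic cuts, carrying $\omega$ to $-\omega$ while preserving $D$ and the $G$-action, since the Goldman--Atiyah--Bott form is an integral over $\Sigma$. As the relative Fukaya category of $(X,-\omega)$ is canonically the opposite of that of $(X,\omega)$ (a $(-J)$-holomorphic disk becomes $J$-holomorphic after precomposing with the conjugation of the disk, which reverses the cyclic order of the boundary punctures), one gets $\fI(-\Sigma,\mathcal F)\cong\fI(\Sigma,\mathcal F)^{\rm op}$. For a disjoint union $\Sigma=\Sigma_1\sqcup\Sigma_2$, the Remark after Proposition \ref{symp-ext} applied to the cut-down spaces gives $X=X_1\times X_2$, $D=(D_1\times X_2)\cup(X_1\times D_2)$, with the product $G\times G$-action; one then invokes a K\"unneth-type theorem for relative Fukaya categories of products, in the spirit of \cite{AJ} and \cite{Limo,FuFu50}, together with the fact that a $G\times G$-equivariant bounding cochain on a split Lagrangian decomposes into a pair, to conclude $\fI(\Sigma,\mathcal F)\cong\fI(\Sigma_1,\mathcal F_1)\otimes\fI(\Sigma_2,\mathcal F_2)$.

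For (A-2) and (A-3) the idea is functoriality through Lagrangian correspondences. To a cobordism $(M,\mathcal E)\colon(\Sigma_1,\mathcal F_1)\to(\Sigma_2,\mathcal F_2)$ one associates the extended moduli space $\widehat{R}(M,\mathcal E)$ of flat connections on $\mathcal E$, sitting inside $\widehat{R}(\Sigma_1,\dots)^-\times\widehat{R}(\Sigma_2,\dots)$ by restriction to the two boundary surfaces; for $\Sigma_1=\emptyset$ this recovers the handlebody Lagrangian of Proposition \ref{handle-body-lag}. Then $\fI_{(M,\mathcal E)}$ is the composition functor ``$-\circ\widehat{R}(M,\mathcal E)$'' in the sense of Wehrheim--Woodward/Fukaya functoriality for Lagrangian correspondences \cite{fu6,FuFu50,MW}, defined by counting quilted pseudo-holomorphic strips in the complement of the relevant divisors. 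Property (A-3) follows from the geometric-composition theorem: the geometric composition of $\widehat{R}(M_1,\mathcal E_1)$ and $\widehat{R}(M_2,\mathcal E_2)$ along $\widehat{R}(\Sigma_2,\dots)$ equals $\widehat{R}(M,\mathcal E)$ once the composition is embedded, which can be arranged by holonomy and Hamiltonian perturbations exactly as in Lemma \ref{lem7474}, and geometric composition induces composition of the associated $A_\infty$-functors up to homotopy. The adjoint-functor statement for $(-M,\mathcal E)$ follows from the same orientation-reversal symplectomorphism as for $-\Sigma$.

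The main obstacle will be to carry quilted Lagrangian Floer theory and this correspondence functoriality through the divisor-complement setting. For the composition property one is forced onto the product pair $\bigl(X_1\times X_2,(D_1\times X_2)\cup(X_1\times D_2)\bigr)$, whose divisor is only normal crossing, so one must extend the relative-Gromov--Witten-type compactification of Theorem \ref{mainthm} to quilted strips in complements of normal crossing divisors, i.e.\ establish Conjecture \ref{cong59}, or replace it by the separated-bubbling compactification of \cite[Section 12]{FuFu50} as suggested in Remark \ref{inv}. Worse, the correspondences $\widehat{R}(M,\mathcal E)$ are in general singular, since flat $\mathcal E$-connections on $M$ may have non-trivial stabilizers, so the smooth quilted theory does not apply directly; this calls for a cut-down version of the extended moduli space for $3$-manifolds with boundary, or for a perturbation scheme making the correspondences clean and generically regular. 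Building these foundations --- and proving the $(\mathcal N,\epsilon)$-independence of $\fI(\Sigma,\mathcal F)$ at the level of $A_\infty$-functors, by comparing symplectic cuts along nested polytopes --- is where essentially all of the difficulty lies.
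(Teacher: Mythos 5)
The statement you address is not proved in the paper: it is explicitly labelled a \emph{conjecture}, and the authors offer no proof, only the surrounding scaffolding (the cut-down extended moduli spaces of Proposition~\ref{prop93}, the equivariant divisor-complement Fukaya category of Theorem~\ref{cat-div-comp} and the preceding conjecture, and the object-producing Conjectures~\ref{conj84}--\ref{cong8585}). So there is no ``paper's own proof'' to compare against, and your text is correctly read as a plan rather than a proof.

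As a plan it is well aligned with what the paper actually sets up. Identifying $\fI(\Sigma,\mathcal F)$ with $\frak{Fuk}^G(X\backslash D,\omega)$ for $X=\widehat{R}(\Sigma,\mathcal N,\mathcal F,p;\epsilon)$, implementing cobordisms by (perturbed) correspondences cut out by restriction of flat connections, and routing (A-3) through geometric composition of correspondences is exactly the functorial picture the paper gestures at in Remark~\ref{inv} and in Conjectures~\ref{conj84}--\ref{cong8585}. You also flag the same blockers the paper flags: the product divisor is only normal crossing, so one needs Conjecture~\ref{cong59} (cf.\ Remark~\ref{normal-crossing}) or the separated-bubbling compactification of \cite[Section 12]{FuFu50}; the correspondences $\widehat R(M,\mathcal E)$ are generically singular and must be handled by holonomy perturbation (Conjecture~\ref{conj84}); and the $(\mathcal N,\epsilon)$-independence is itself the content of the preceding unproved conjecture, which you correctly promote to a functor-level statement.

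What you should not present as established: strict unitality of $\frak{Fuk}^G(X\backslash D,\omega)$ does not follow by merely observing that the relevant disks avoid $D$ --- in the FOOO framework unitality is obtained through the homotopy-unit construction, and carrying that construction through both the $G$-equivariant and the relative-GW compactification layers is genuinely new work, not inheritance. Similarly, the K\"unneth statement for divisor-complement Fukaya categories with a product $G\times G$-action is asserted, not available off the shelf in \cite{AJ,Limo,FuFu50}; the cited results are for absolute Fukaya categories. And the ``geometric composition induces composition of $A_\infty$-functors'' principle you invoke is proved (in various forms) for monotone or exact embedded correspondences; none of those hypotheses hold here without further perturbation and the divisor-complement extension. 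None of these are \emph{wrong} steps, but each is currently at the level of a sub-conjecture rather than a lemma, which is precisely why the paper states the result as a conjecture. A more honest framing of your write-up would be as a dependency graph: the conjecture reduces to (a) Conjecture~\ref{cong59}, (b) a $G$-equivariant quilted divisor-complement Floer theory with homotopy units, (c) a relative K\"unneth theorem, and (d) the $(\mathcal N,\epsilon)$-independence conjecture, with your text supplying the glue between them.
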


To elaborate on this proposal, let $(M,\mathcal E)$ be a cobordism from the empty pair to $(\Sigma,\mathcal F)$. As in the case of handlebodies discussed in Section \ref{sec:extendedmoduli}, we can associate to $(M,\mathcal E)$ a subspace of $\widehat{R}(\Sigma,\mathcal F,p)$ which lives in $\mu^{-1}(0)$. Therefore, it can be also regarded as a subspace of $X=\widehat{R}(\Sigma,\mathcal N,\mathcal F,p;\epsilon)$. A holonomy perturbation of this subspace can be used to turn this space into an immersed Lagrangian submanifold $\widetilde L_{(M,\mathcal E)}$. 
\begin{conjecture}\label{conj84}
	There exists a bounding cochain $b_M$ in
	\[
	  H_G(\widetilde L_{(M,\mathcal E)}  \times_{X} \widetilde L_{(M,\mathcal E)} ).
	\]
	Together with $b_M$, the immersed Lagrangian submanifold $\widetilde L_{(M,\mathcal E)}$ determines an object of 
	${\frak Fuk}^G(X\backslash D,\omega)$.
\end{conjecture}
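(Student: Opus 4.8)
The plan is to produce $b_M$ by successive approximation along the energy filtration of the Novikov ring, using a gauge-theoretic description of the holomorphic disks bounding $\widetilde L_{(M,\cE)}$ to control the obstruction at each stage, and to carry out the entire construction $G$-equivariantly. Granting the normal crossing generalization of Theorem \ref{cat-div-comp} (Conjecture \ref{cong59}) together with the $G$-equivariant refinement embodied in Theorem \ref{thm5151}, we have the operations $\fm_k^G$ on $H^*_G(\widetilde L_{(M,\cE)}\times_X\widetilde L_{(M,\cE)};\Lambda_0)$; a $G$-equivariant bounding cochain is by definition an odd-degree element, divisible by a positive power of $T$, solving the Maurer--Cartan equation \eqref{formMC} with $\fm_k$ replaced by $\fm_k^G$. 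Any such $b_M$ makes $(\widetilde L_{(M,\cE)},b_M)$ an object of ${\frak Fuk}^G(X\backslash D,\omega)$, and since Maurer--Cartan solutions are unique up to gauge equivalence the resulting object is well defined up to isomorphism; so it suffices to construct one solution.

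The key geometric input is an identification of the holomorphic disks that enter $\fm_k^G$. A disk $u\colon(D^2,\partial D^2)\to(X\backslash D,\widetilde L_{(M,\cE)})$, recorded in the relative Gromov--Witten compactification together with its tangency data along the components of $D$, corresponds --- via the description $\widehat{R}(\Sigma,\cN,\cF,p;\epsilon)_0=(Q\circ\mu)^{-1}(\overset{\circ\hspace{3mm}}{\Delta_{\cN}^+}(\epsilon))$ and the handlebody-type construction of $\widetilde L_{(M,\cE)}$ --- to a solution of a mixed equation on a four-manifold assembled from $\Sigma\times D^2$ and $M\times S^1$, in the spirit of Section \ref{modulispace}. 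The index of that equation computes the Maslov index $\mu_{\widetilde L_{(M,\cE)}}(\beta)$ while its energy is governed by $\omega(\beta)$, and below the minimal positive symplectic area all such configurations reduce to flat connections on $M$, so no nonconstant disk or sphere bubble appears at low energy. Consequently the only contributions to $\fm_0^G(1)$ of total Maslov index at most one come from the constant disks at the self-intersection locus of $\widetilde L_{(M,\cE)}$ and the teardrops rooted there, while the remaining terms are supported in positive symplectic area. In particular $\widetilde L_{(M,\cE)}$ is weakly unobstructed, i.e. $\fm_0^G(1)$ is a multiple of the strict unit.

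One then builds $b_M=\sum_i b^{(i)}T^{\lambda_i}$ inductively over the energy filtration. Given $b_{<\lambda}$ solving \eqref{formMC} modulo $T^{\lambda}$, the failure to extend is measured by an obstruction class $\fo(\lambda)\in H^{\mathrm{even}}_G(\widetilde L_{(M,\cE)}\times_X\widetilde L_{(M,\cE)};\R)$ which is a cocycle for the $\fm_1^G$-differential, and one must show it vanishes in cohomology. Using the explicit generators of $H^*_G(\widetilde L_{(M,\cE)}\times_X\widetilde L_{(M,\cE)})$ --- the equivariant cohomology of $\widetilde L_{(M,\cE)}$ together with its self-intersection points --- this reduces to the vanishing of certain equivariant disk counts, which we expect to follow from the three-manifold origin of $\widetilde L_{(M,\cE)}$: an orientation-reversing symmetry of the auxiliary four-manifold, or the fact that the teardrops rooted at the self-intersection locus occur in $G$-orbits with cancelling signs, should force the relevant signed counts to vanish. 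Choosing a primitive at each stage produces $b_M$. Throughout, one works with the $G$-equivariant Kuranishi structures of \cite{FuFu5} on $\cM_{k+1}(\widetilde L_{(M,\cE)};\beta)$ and $G$-equivariant families of perturbations, so that every choice is $G$-equivariant and $b_M$ genuinely lies in $H^{\mathrm{odd}}_G$; as in \cite{fooobook,fooobook2,AJ}, changing the holonomy perturbation of $\widetilde L_{(M,\cE)}$ or the perturbation data alters $b_M$ only up to gauge equivalence.

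The main obstacle is two-fold. First, the construction presupposes the normal crossing case of the relative Gromov--Witten compactification and of its Kuranishi theory (Conjecture \ref{cong59}), whose analysis is considerably more delicate than the smooth-divisor case of \cite{DF}. Second, and specific to this statement, is the control of the teardrop contributions from the self-intersection locus of the immersed Lagrangian $\widetilde L_{(M,\cE)}$: for immersed Lagrangians these Maslov-index-one disks are exactly what can obstruct unobstructedness, and their vanishing has to be extracted from the gauge-theoretic picture --- most plausibly from an index and energy argument showing the corresponding moduli spaces are empty below a threshold, together with a symmetry argument at the threshold. Making this rigorous, which amounts to establishing the mixed-equation analysis of Section \ref{modulispace} in this degenerate, general-$G$ setting, is the real content of the conjecture.
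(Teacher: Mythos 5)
This statement is a \emph{conjecture} in the paper; there is no proof to compare against verbatim. What the paper does offer is a pointer to the intended strategy: following Conjecture~\ref{conj84} the authors say the proof ``is expected to be similar to the similar result in the case of $\SO(3)$-bundles,'' citing \cite[Theorem~1.1~(1)]{fu7} and \cite{takagi}, and in Section~\ref{admissible} they repeat that ``the moduli space of solutions to the mixed equation can be also used to define a bounding cochain $b_{(M,\mathcal E)}$ for this Lagrangian \cite[Theorem~1.1~(1)]{fu7}.'' That intended strategy is \emph{constructive}: build a four-manifold $Y_+$ out of $M$ as in Section~\ref{modulispace}, consider the moduli spaces of solutions to the mixed ASD/holomorphic-curve equation with Lagrangian boundary condition along $\widetilde L_{(M,\mathcal E)}$, and push them forward by evaluation to produce the chain $b_M$ directly. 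The Maurer--Cartan relation~\eqref{formMC} is then \emph{not} something that needs to be shown to vanish in cohomology --- it is read off from the codimension-one boundary of these mixed moduli spaces, where holomorphic-disk bubbling off $W_-$ exactly produces the terms $\fm_k^G(b_M,\dots,b_M)$.

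Your proposal takes a genuinely different route: obstruction theory over the energy filtration, at each stage producing an even cocycle $\fo(\lambda)$ and arguing it vanishes in cohomology so that a primitive can be chosen. This is where the gap lies. You do not actually have an argument for the vanishing of $\fo(\lambda)$. The two mechanisms you float --- an orientation-reversing symmetry of an auxiliary four-manifold, or cancellation of teardrops within $G$-orbits --- are not established anywhere, and there is no reason to believe either holds. For an immersed Lagrangian the teardrop contributions at the self-intersection locus are precisely what make $\fm_0^G(1)$ fail to be a multiple of the unit, and nothing in the setup forces those counts to cancel. Indeed, if such cancellation held generically one would expect $b_M=0$ to work, which is certainly not what is anticipated (the whole point of \cite{fu7} is that $b_M$ carries nontrivial gauge-theoretic information about $M$). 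Your paragraph on identifying disks with mixed-equation solutions is the right geometric input, but you use it only to constrain low-energy contributions, whereas the intended argument uses it to \emph{define} $b_M$ outright, turning the MC equation into a bubbling identity rather than an obstruction-vanishing problem. In short, the obstruction-theoretic framing introduces a vanishing problem that the constructive approach simply does not have, and you provide no substitute argument for it.

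One smaller point: you write that ``Maurer--Cartan solutions are unique up to gauge equivalence,'' so the object is ``well defined up to isomorphism.'' That is not true in general --- distinct gauge-equivalence classes of bounding cochains give genuinely different objects of the Fukaya category. In the paper's picture $b_M$ is pinned down by the three-manifold $M$ through the mixed-equation moduli spaces; it is not an arbitrary MC solution, and the well-definedness claim you want is that \emph{this specific} $b_M$ is independent of perturbation data up to gauge equivalence, which again requires the constructive description to formulate.
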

If the construction of various moduli space can be carried out as expected, then the proof of Conjecture \ref{conj84} is expected to be similar to the similar result in the case of $\SO(3)$-bundles. (See \cite[Theorem 1.1 (1)]{fu7} and \cite{takagi}.) 
\par
Let $(M_i,\mathcal E_i)$ be a pair such that:
\[
  \partial(M_1,\mathcal E_1) = (\Sigma, \mathcal F)= -\partial(M_2,\mathcal E_2).
\]
Therefore, we can glue these two pairs to form a closed manifold $M$ and a $G$-bundle $\mathcal E$ over $M$. Conjecture \ref{conj84} asserts that there are pairs $(\widetilde L_{(M_1,\mathcal E_1)},b_1)$ and $(\widetilde L_{(M_2,\mathcal E_2)},b_2)$ of the objects of ${\frak Fuk}^G(X\backslash D,\omega)$.

\begin{conjecture}\label{cong8585}
	The Lagrangian Floer homology
	\[
	  HF_G((\widetilde L_{(M_1,\mathcal E_1)},b_1),(\widetilde L_{(M_2,\mathcal E_2)},b_2);X \setminus D)
	\]
	is an imvariant of $(M,\mathcal E_M)$.
\end{conjecture}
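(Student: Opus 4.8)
The plan is to show that the Floer homology on the left is unchanged under every auxiliary choice entering its definition, and that any two splittings $M = M_1\cup_\Sigma M_2$ and $M = M_1'\cup_{\Sigma'}M_2'$ of a fixed pair $(M,\mathcal E_M)$ are joined by a finite sequence of such changes. It is convenient to separate the choices that keep the splitting surface $(\Sigma,\mathcal F)$ fixed from those that change it.

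\emph{Choices fixing $(\Sigma,\mathcal F)$.} Here the remaining data are the cut data $(\mathcal N,\epsilon)$ defining $X = \widehat R(\Sigma,\mathcal N,\mathcal F,p;\epsilon)$ and its divisor $D$, the holonomy perturbations making $\widetilde L_{(M_i,\mathcal E_i)}$ into immersed Lagrangians with transverse self-intersections, the bounding cochains $b_i$, and the $G$-invariant almost complex structure. Independence of $(\mathcal N,\epsilon)$ follows from Proposition \ref{prop93}: varying these data replaces $(X,D)$ by a pair symplectomorphic to it on the common open dense subset where the moment-map image lies in the open polytope, and $HF_G(\,\cdot\,,\,\cdot\,;X\setminus D)$ depends only on that subset; this is the analogue of the corresponding invariance step in \cite{MW}. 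Independence of the almost complex structure and of the holonomy perturbation is the equivariant, immersed form of Theorem \ref{mainthm}(ii): two holonomy perturbations yield Lagrangians that are $G$-equivariantly Hamiltonian isotopic in $X\setminus D$. Independence of the bounding cochain holds because, by construction, $HF_G((\widetilde L_0,b_0),(\widetilde L_1,b_1);X\setminus D)$ is the morphism space between two objects of the category $\frak{Fuk}^G(X\setminus D,\omega)$ of Theorem \ref{cat-div-comp}, and two admissible bounding cochains on $\widetilde L_{(M_i,\mathcal E_i)}$ (which exist by Conjecture \ref{conj84}) define homotopy-equivalent objects, hence isomorphic Floer homology.

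\emph{Change of splitting surface.} Any two splittings of a fixed $M$ are related by a finite chain of elementary moves, and each move amounts to inserting an elementary three-dimensional cobordism $(N,\mathcal E_N):(\Sigma,\mathcal F)\to(\Sigma',\mathcal F')$, so that $M_1' \cong M_1\cup_\Sigma N$ and $M_2 \cong N\cup_{\Sigma'}M_2'$. Applying the extended-moduli construction of Section \ref{sec:extendedmoduli} to $N$, with base points on both boundary components and the non-abelian symplectic cut of Proposition \ref{prop93} on each side, produces a $(G\times G)$-invariant immersed Lagrangian correspondence $L_{(N,\mathcal E_N)}$ in the complement of the normal crossing divisor $(D\times X')\cup(X\times D')$ in $X^-\times X'$ (here $X^-$ is $X$ with the sign of $\omega$ reversed), together with a bounding cochain $b_N$ (the cobordism analogue of Conjecture \ref{conj84}, provable as in \cite{fu7,takagi}). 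The geometric input to establish is that the geometric composition $\widetilde L_{(M_1,\mathcal E_1)}\circ L_{(N,\mathcal E_N)}$ is again an immersed Lagrangian in $X'\setminus D'$ with transverse self-intersections, that it is canonically identified with $\widetilde L_{(M_1\cup_\Sigma N,\mathcal E)}$, and that its induced bounding cochain, call it $b_1'$, matches the one of Conjecture \ref{conj84} for $M_1\cup_\Sigma N$. Granting this, one invokes the equivariant version, relative to a (product, normal crossing) divisor, of the Wehrheim-Woodward composition theorem: inserting a seam labelled by $L_{(N,\mathcal E_N)}$ does not change the equivariant quilted Floer homology up to isomorphism,
\[
  HF_G\big((\widetilde L_{(M_1,\mathcal E_1)},b_1),\,L_{(N,\mathcal E_N)},\,(\widetilde L_{(M_2',\mathcal E_2')},b_2')\big)
  \cong
  HF_G\big((\widetilde L_{(M_1,\mathcal E_1)}\circ L_{(N,\mathcal E_N)},b_1'),\,(\widetilde L_{(M_2',\mathcal E_2')},b_2')\big),
\]
together with the mirror statement composing $L_{(N,\mathcal E_N)}$ on the $M_2'$ side. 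Chaining such isomorphisms along the elementary moves identifies the Floer homologies of the two splittings. Conceptually this is exactly the part of the $A_\infty$ functoriality of Conjecture \ref{conj21} expressing $\fI_{(M_2,\mathcal E_2)}(\fI_{(M_1,\mathcal E_1)})\simeq \fI_{(M,\mathcal E_M)}$, and only this much is needed for the invariance statement.

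\emph{Main obstacle.} The difficulty is the analytic and homological-algebra foundation for the composition theorem in this setting. Since $L_{(N,\mathcal E_N)}$ lives in the complement of a normal crossing divisor, one needs either the extension of Theorem \ref{cat-div-comp} stated in Conjecture \ref{cong59}, or the product compactification of \cite[Section 12]{FuFu50}, which treats sphere bubbles on the two factors separately. Establishing that geometric composition is embedded and unobstructed, and that it intertwines the Floer differentials, requires the strip-shrinking (``figure-eight'') analysis for quilted pseudo-holomorphic maps, now carried out $G$-equivariantly through Kuranishi structures in the spirit of \cite{FuFu5} and in a divisor complement; the genuinely new point is to control the strata of broken quilts whose components touch $D$ or $D'$. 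Finally, one must verify that the singular infinite-dimensional Lagrangian correspondence attached to $N$, analogous to the one implicit in the mixed equation of Section \ref{modulispace}, descends after the symplectic cut to the honest finite-dimensional $L_{(N,\mathcal E_N)}$, and that holonomy perturbations on $M_1$, $N$ and $M_2'$ can be chosen compatibly so that all relevant fiber products are clean. Each of these steps is expected to be technically heavy, which is why the statement is left as a conjecture.
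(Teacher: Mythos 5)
The statement is a conjecture, and the paper offers no proof of it; the closest the paper comes to a strategy is Remark~\ref{inv} (addressed to Conjecture~\ref{symp-equiv}) and the surrounding material on Conjectures~\ref{conj21} and~\ref{cong59}. Your outline tracks that envisioned approach closely: reduce to quilted Floer homology/Lagrangian correspondences in the style of \cite{MW}, note that correspondences between pairs $(X_i,D_i)$ force one into the complement of the normal-crossing divisor $(D_1\times X_2)\cup(X_1\times D_2)$, and either develop the theory of Conjecture~\ref{cong59} or use the $\mathcal M'$-type product compactification of \cite[Section~12]{FuFu50} to sidestep it. So your ``change of splitting surface'' step and your ``main obstacle'' paragraph are aligned with what the authors say they have in mind.

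One step in your ``choices fixing $(\Sigma,\mathcal F)$'' paragraph is genuinely flawed. You claim that independence of the bounding cochain holds because ``two admissible bounding cochains on $\widetilde L_{(M_i,\mathcal E_i)}$ define homotopy-equivalent objects.'' This is false in general: distinct bounding cochains on a fixed immersed Lagrangian typically give non-isomorphic objects of $\frak{Fuk}^G(X\setminus D,\omega)$, and nothing in the Maurer--Cartan equation alone forces them to be gauge equivalent. The intended content of Conjecture~\ref{conj84} (cf.\ \cite[Theorem~1.1(1)]{fu7} and \cite{takagi}) is that the instanton/mixed-equation moduli on $(M_i,\mathcal E_i)$ produce a \emph{specific} bounding cochain $b_{M_i}$, and what one must show is that this geometrically constructed $b_{M_i}$ is canonical up to gauge equivalence as the holonomy perturbation, almost complex structure, and cut data vary. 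Phrased that way, the right proof ingredient is a chain-level cobordism argument (again via moduli of mixed equations over a parametrized family), not a blanket uniqueness of bounding cochains; without it, the putative Floer homology is an invariant only after fixing the auxiliary bounding cochain, which is not what Conjecture~\ref{cong8585} asserts.

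Your appeal to Proposition~\ref{prop93} for independence of $(\mathcal N,\epsilon)$ is also slightly optimistic as stated: the proposition gives a symplectomorphism on the open locus $(Q\circ\widehat\mu)^{-1}(\overset{\circ\hspace{3mm}}{\Delta^+_{\mathcal N}}(\epsilon))$, not a symplectomorphism of pairs $(X,D)$; concluding independence of the Floer homology requires a stretch-the-neck or continuation-map argument showing that disks counted for one cut do not escape the symplectomorphic chart, analogous to but distinct from the deformation argument in \cite{MW}. This is plausibly where some of the normal-crossing analysis will actually be used. With these two points tightened, your strategy is consistent with what the authors outline.
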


\section{Admissible Bundles and Instanton Floer Homology}
\label{admissible}

Let $E$ be a hermitian vector bundle of rank $N$ over a 3-manifold $M$. Then the vector bundle $E$ is determined up to isomorphism by its first Chern class. The pair $(M,E)$ is called an {\it admissible pair} if there is an oriented embedded surface $S$ in $M$ such that the pairing of $c_1(E)$ and the fundamental class of $S$ is coprime to $N$. We will also write $\mathcal E$ for the ${\rm PU}(N)$-bundle associated to $E$. Note that $M$ in this case is not an integral homology sphere. Floer's instanton homology can be extended to the case that $(M,\mathcal E)$ arises from an admissible pair \cite{fl3,KM:YAFT}. We will write $I^*(M,\mathcal E)$ for this version of instanton Floer homology. (See \cite[Section 3.1]{DX:suture} for a review of the general properties of $I^*(M,\mathcal E)$.) The proposal of the previous section to define symplectic instanton Floer homology can be also specialized to admissible pairs. We shall keep using the notation $I^*_{symp}(M,\mathcal E)$ to denote this conjectural invariant. There is yet another approach to define symplectic instanton Floer homology of $(M,\mathcal E)$ in this context, temporarily denoted by $\widetilde I^*_{symp}(M,\mathcal E)$, which avoids the technical difficulties of equivariant Floer homology in divisor complements. The current section concerns the relationship between the invariants $I^*_{symp}(M,\mathcal E)$ and $\widetilde I^*_{symp}(M,\mathcal E)$.

The definition of $\widetilde I^*_{symp}(M,\mathcal E)$ follows a similar route as $I^*_{symp}(M,\mathcal E)$. Suppose $F$ is a hermitian vector bundle of rank $N$ over an oriented Riemann surface $\Sigma$ such that the evaluation of $c_1(F)$ is coprime to $N$. Then the pair $(\Sigma,F)$ is called an admissible pair. Let $\mathcal F$ be the ${\rm PU}(2)$-bundle associated to $F$. Then the moduli space of flat connections $R(\Sigma,\mathcal F)$, defined in Section \ref{sec:extendedmoduli}, is a smooth K\"ahler manifold for this choice of $\mathcal F$. Let $M$ be a 3-manifold with boundary $\Sigma$ and $E$ be a hermitian vector bundle on $M$ extending $F$. Then we define ${\widetilde {\mathcal R}}(M,\mathcal E)$ to be the space of all elements of $R(\Sigma,\mathcal F)$ represented by flat connections on $\mathcal F$ that can be extended to $\mathcal E$. This space can be perturbed to an immersed  Lagrangian submanifold of $R(\Sigma,\mathcal F)$ which we still denote by ${\widetilde {\mathcal R}}(M,\mathcal E)$ \cite{He}. The moduli space of solutions to the mixed equation can be also used to define a bounding cochain $b_{(M,\mathcal E)}$ for this Lagrangian \cite[Theorem 1.1 (1)]{fu7}. Therefore, $({\widetilde {\mathcal R}}(M,\mathcal E),b_{(M,\mathcal E)})$ defines an object of ${\mathfrak {Fuk}}(R(\Sigma,\mathcal F))$. 

Next, let $(M,E)$ be an admissible pair. There is an embedded Riemann surface $\Sigma$ in $M$ such that removing $\Sigma$ from $M$ gives a disconnected manifold, and the pair given by $\Sigma$ and $F:=E|_{\Sigma}$ is admissible. Let $M_1$ and $M_2$ be the closure of the connected components of $M\backslash \Sigma$ and $E_i:=E|_{M_i}$. We can assume that the Lagrangians ${\widetilde {\mathcal R}}(M_1,\mathcal E_1)$ and ${\widetilde {\mathcal R}}(M_2,\mathcal E_2)$ have clean intersection by applying holonomy perturbations to one of them. Then $\widetilde I^*_{symp}(M,\mathcal E)$ is defined to be the Lagrangian Floer homology of the two elements $({\widetilde {\mathcal R}}(M_1,\mathcal E_1),b_{(M_1,\mathcal E_1)})$ and $({\widetilde {\mathcal R}}(M_2,\mathcal E_2),b_{(M_2,\mathcal E_2)})$. This Lagrangian Floer homology is independent of the choice of $\Sigma$. 

\begin{conjecture} \label{admissible-AF-synp-quo}
	For an admissible pair $(M,E)$, the vector spaces $I^*(M,E)$, $I_{symp}^*(M,E)$ and ${\widetilde I}_{symp}^*(M,E)$
	are isomorphic to each other.
\end{conjecture}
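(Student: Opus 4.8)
The plan is to establish the two isomorphisms $\widetilde I^*_{symp}(M,E)\cong I^*(M,E)$ and $I^*_{symp}(M,E)\cong\widetilde I^*_{symp}(M,E)$ separately, and then deduce Conjecture~\ref{admissible-AF-synp-quo} by transitivity. Throughout, write $(\Sigma,F)$ for an admissible splitting surface of $(M,E)$ as in Section~\ref{admissible}, with $M=M_1\cup_\Sigma M_2$, $\mathcal E_i=\mathcal E|_{M_i}$, and $\mathcal F$ the associated ${\rm PU}(N)$-bundle; recall that then $R(\Sigma,\mathcal F)$ is a smooth K\"ahler manifold and the $\widetilde{\mathcal R}(M_i,\mathcal E_i)$ are immersed Lagrangians carrying bounding cochains $b_{(M_i,\mathcal E_i)}$ built from the mixed equation.

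\emph{The isomorphism $\widetilde I^*_{symp}(M,E)\cong I^*(M,E)$.} This is the Atiyah--Floer statement for admissible pairs and it follows the program of Section~\ref{modulispace}, but with the \emph{smooth} symplectic manifold $R(\Sigma,\mathcal F)$ in place of the cut-down extended moduli space, so that no divisor and no equivariant input are needed. One constructs, exactly as in Definitions~\ref{defn71} and~\ref{defn72}, the moduli space of pairs $(u,A)$ where $u\colon W_-\to R(\Sigma,\mathcal F)$ is a finite-energy holomorphic map with boundary on lifts of $\widetilde{\mathcal R}(M_1,\mathcal E_1)$ and $\widetilde{\mathcal R}(M_2,\mathcal E_2)$, asymptotic over the incoming end to a flat connection $a$ on $M$, and $A$ is a finite-energy anti-self-dual connection on the $4$-manifold $Y_+$ glued from $W_+\times\Sigma$ and $M_i\times[0,\infty)$, asymptotic over its outgoing end to a flat connection $b$, subject to the matching condition along $\Sigma\times C$ that $A|_{\Sigma\times\{z\}}$ be flat with gauge class $u(z)\in R(\Sigma,\mathcal F)$. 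The point of working with an admissible bundle is that the Lagrangian correspondence underlying this matching condition --- from the space of connections on $\mathcal F$ over $\Sigma$ to $R(\Sigma,\mathcal F)$ --- is \emph{smooth}, so the Fredholm theory, regularity, removal of singularities, and Gromov--Uhlenbeck-type compactness can be obtained by extending the analysis of \cite{Li,DFL,fu7}. Counting the isolated points of the compactified moduli space $\mathcal M(W_-,Y_+;a,b;E)$, weighted using the bounding cochains $b_{(M_i,\mathcal E_i)}$, defines a map from a chain complex computing $\widetilde I^*_{symp}(M,E)$ to $C^*(M,\mathcal E)$; the identification of the codimension-one boundary of the one-dimensional moduli spaces with configurations of type \eqref{boundary73} and \eqref{boundary74} --- a pseudo-holomorphic strip with bounding-cochain insertions splitting off on the symplectic side, or an instanton trajectory over $\R\times M$ splitting off on the gauge-theoretic side --- shows this is a chain map, and the fact that the energy-zero part of $\mathcal M(W_-,Y_+;a,b;E)$ is empty for $a\ne b$ and a single regular point for $a=b$ shows it induces an isomorphism on homology. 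Conceptually this is the admissible-bundle instance of Conjecture~\ref{conj7878}, and of the statement that $\mathfrak{Fuk}(R(\Sigma,\mathcal F))$ realizes the category $\fI(\Sigma,\mathcal F)$ of Conjecture~\ref{conj21}.

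\emph{The isomorphism $I^*_{symp}(M,E)\cong\widetilde I^*_{symp}(M,E)$.} This is a symplectic-reduction statement. The cut-down extended moduli space $X=\widehat R(\Sigma,\mathcal N,\mathcal F,p;\epsilon)$ of Proposition~\ref{prop93} carries a Hamiltonian ${\rm PU}(N)$-action with $X/\!\!/{\rm PU}(N)=R(\Sigma,\mathcal F)$, and the Lagrangians $\widetilde L_{(M_i,\mathcal E_i)}\subset X\setminus D$ entering the definition of $I^*_{symp}(M,E)$ lie in $\widehat\mu^{-1}(0)$ and descend to $\widetilde{\mathcal R}(M_i,\mathcal E_i)$. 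For an admissible pair the ${\rm PU}(N)$-action is locally free along $\widehat\mu^{-1}(0)$ --- hence on these Lagrangians and on the moduli spaces of holomorphic disks in $X\setminus D$ with boundary on them --- and $\widehat\mu^{-1}(0)\subset X\setminus D$, so the divisor $D$ is invisible after reduction. Consequently the Borel construction in the definition of $I^*_{symp}(M,E)$ collapses: the equivariant ${\rm RGW}$-Kuranishi spaces $\mathcal M^{\rm RGW}(a,b;\beta;L_0,L_1)/{\rm PU}(N)$ descend to moduli spaces of pseudo-holomorphic strips in $R(\Sigma,\mathcal F)$ with boundary on the $\widetilde{\mathcal R}(M_i,\mathcal E_i)$, the equivariant bounding cochains descend to the $b_{(M_i,\mathcal E_i)}$, and the equivariant cohomology of each relevant locally free ${\rm PU}(N)$-space agrees with the ordinary cohomology of its quotient. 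Checking that these identifications respect the differential --- i.e.\ that the ${\rm RGW}$-compactifications and their codimension-one strata descend compatibly --- yields the desired isomorphism of chain complexes.

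\emph{Main obstacle.} The technical heart is the analysis of the mixed moduli space $\mathcal M(W_-,Y_+;a,b;E)$ in the first step: building a Fredholm and transversality (Kuranishi) package for a problem that couples the four-dimensional anti-self-duality equation on $Y_+$ --- a manifold with a codimension-one seam $\Sigma\times C$ along which the ``symplectic'' region $W_+\times\Sigma$ meets the ``gauge'' regions $M_i\times[0,\infty)$ --- to the two-dimensional Cauchy--Riemann equation on $W_-$ through the flatness-and-holonomy matching condition, together with a compactness theorem whose only degeneration phenomena are precisely \eqref{boundary73} and \eqref{boundary74}. Even in the admissible case, where the underlying correspondence is smooth, this demands a careful elliptic boundary-value analysis near the seam, an adiabatic-limit-type gluing, and control of curvature concentration along $\Sigma\times C$; this is where the program of \cite{Li,DFL,fu7} must be genuinely extended, and it is also the main analytic content of the $\SO(3)$-analogue pursued in \cite{DFL}. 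By comparison, the second step is essentially bookkeeping once local freeness of the ${\rm PU}(N)$-action near $\widehat\mu^{-1}(0)$ for admissible bundles is in place and the ${\rm RGW}$-compactifications are shown to descend to the quotient. Combining the two steps gives $I^*(M,E)\cong\widetilde I^*_{symp}(M,E)\cong I^*_{symp}(M,E)$, as claimed.
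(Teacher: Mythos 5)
This statement is a \emph{conjecture} in the paper, not a theorem, and the paper does not give a proof of it; there is therefore no ``paper's own proof'' against which your argument can be checked line by line. What the paper does, in the two remarks surrounding Conjecture~\ref{admissible-AF-synp-quo}, is sketch the same two-step program you outline, while being careful to indicate which pieces are established (Theorem~\ref{quotthem} in the smooth-quotient case; \cite{DFL} in the case of embedded Lagrangians) and which are open. Your proposal reproduces that outline, but presents as accomplished several steps that the paper explicitly flags as missing.

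The most substantive gap is in your treatment of $I^*_{symp}(M,E)\cong\widetilde I^*_{symp}(M,E)$. You assert that the equivariant moduli spaces ``descend to moduli spaces of pseudo-holomorphic strips in $R(\Sigma,\mathcal F)$'' and that the Borel construction ``collapses,'' so that the identification is essentially bookkeeping. This is not correct as stated. A holomorphic strip in $X=\widehat R(\Sigma,\mathcal N,\mathcal F,p;\epsilon)$ with boundary on Lagrangians inside $\widehat\mu^{-1}(0)$ need not stay in $\widehat\mu^{-1}(0)$, so it does not project to a strip in $R(\Sigma,\mathcal F)$; matching such strips with strips in the reduced space is precisely the content of the quantum Kirwan map, and it produces a nontrivial bulk deformation $\fb$ in general. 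This is exactly what Theorem~\ref{quotthem} says: the equivariant Fukaya category is homotopy equivalent to $\frak{Fuk}(Y,\overline\omega,\overline\fL,\fb)$ with a possibly nonzero $\fb$, and $\fb$ can be taken to vanish only under an additional monotonicity hypothesis on the correspondence Lagrangian \eqref{quotcorr} which you do not verify here. Moreover, passing from Theorem~\ref{quotthem} to the divisor-complement setting in which $I^*_{symp}$ actually lives is the content of Conjecture~\ref{cong107}, which itself depends on the still-conjectural normal-crossing extension of the divisor-complement Floer theory (Conjecture~\ref{cong59}). You have replaced the bulk-deformation mechanism with a naive descent and have thereby skipped the hardest part of that step.

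For the first isomorphism $\widetilde I^*_{symp}(M,E)\cong I^*(M,E)$ your summary is closer to the paper's intent: in the admissible case the correspondence is smooth, so in principle the mixed-equation program of Section~\ref{modulispace} applies without the extended-moduli-space complications. But, as you yourself note in the ``main obstacle'' paragraph, the Fredholm/elliptic-regularity/Gromov--Uhlenbeck package for $\mathcal M(W_-,Y_+;a,b;E)$ is not yet available in the generality needed (\cite{DFL} handles the embedded case; the immersed, bounding-cochain case is deferred to future work). So this step, too, is a program rather than a proof. In short: there is no error in the shape of your strategy --- it mirrors the paper's --- but the paper treats both halves as open, and your argument does not supply the missing bulk-deformation analysis for the reduction step nor the analytic foundations for the mixed-equation step.
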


The part of the above conjecture about the relationship between  $I^*(M,E)$ and ${\widetilde I}_{symp}^*(M,E)$ is what we previously referred as the {\it $\SO(3)$-analogue of the Atiyah-Floer conjecture}. In the case that the Lagrangians involved in the definition of ${\widetilde I}_{symp}^*(M,E)$ are embedded, the $\SO(3)$-Atiyah-Floer conjecture is addressed in \cite{DFL}. The more general case will be treated in another forthcoming paper following the strategy proposed in \cite{fu7}. In the following, we discuss some general results in symplectic Floer homology which are related to the part of Conjecture \ref{admissible-AF-synp-quo} about the existence of isomorphism between $I_{symp}^*(M,E)$ and ${\widetilde I}_{symp}^*(M,E)$. Once the definition of the invariant $I_{symp}^*(M,E)$ is fully developed, we hope that these general results give a proof for this part of the above conjecture.

We firstly need an extension of the category $\frak {Fuk}(X,\omega,\fL)$ for a clean collection of immersed Lagrangian 
submanifolds $\fL$ in a symplectic manifold $(X,\omega)$. Suppose $\fb\in H^{{\rm even}}(X;\Lambda_0)$ with $\frak b \equiv 0 \mod T^{\epsilon}$.\footnote{The condition $\frak b \equiv 0 \mod T^{\epsilon}$ is not necessary. However, we need a slightly delicate argument to prove the convergence of operators. See, for example, \cite[Definition 17.8]{fooospectr}. For our application in this paper it suffices to consider the case when this extra condition is satisfied.} Then the  $A_{\infty}$ operations $\fm_k$ associated to $\fL$ can be deformed by $\frak b$ to $\frak m^{\frak b}_k$ as in \cite[Definition 3.8.38]{fooobook}. Such deformations of the $A_\infty$ structure of $\frak {Fuk}(X,\omega,\fL)$ are called {\it Lagrangian Floer theory with bulk deformation}. Roughly speaking, we deform $\frak m_k$ to $\frak m^{\frak b}_k$ using the holomorphic disks which hit a cycle that is Poincar\'e dual to $\frak b$. Bounding cochains of this deformed structure are also defined in the same way as in \eqref{formMC}. Consequently, there is a (filtered) $A_{\infty}$-category $\frak {Fuk}(X,\omega,\fL,\fb)$ whose objects are pairs of a Lagrangian $L \in \frak \fL$ and a bounding cochain $b$ with respect to the $\frak b$-deformed (filtered) $A_{\infty}$ structure \cite[Definition 3.8.38]{fooobook}.

\begin{shitu}\label{situ55}
	Suppose a Hamiltonian action of a Lie group $G$ on a symplectic manifold $(X,\omega)$ is given.
	Let $\mu : X \to \frak g^*$ denote the moment map of this action. Let the action of $G$ on $\mu^{-1}(0)$ is free. Then
	the quotient $Y=\mu^{-1}(0)/G$ is a symplectic manifold with a symplectic form $\overline \omega$ \cite{MaW}.
	Let $\frak L$ be a clean collection of $G$-equivariant immersed Lagrangian submanifolds. 
	For each $(\widetilde L,\iota_{\widetilde L})\in \fL$, we assume that the $G$ action on $\widetilde L$ is free and
	$\iota_{\widetilde L}(\widetilde L) \subset \mu^{-1}(0)$.
	Then $\overline L := (\widetilde L/G,[i_{\widetilde L}])$ is an immersed Lagrangian submanifold of $Y$.
	The collection of all such immersed Lagrangians of $Y$ is denoted by $\overline{\fL}$. 
	Finally we assume that the following Lagrangian:
	\begin{equation}\label{quotcorr}
		\{(x,y) \in X \times Y \mid x \in \mu^{-1}(0), y = [x]\}
	\end{equation}
	is spin.
\end{shitu}

\begin{theorem}\label{quotthem}
	There exists $\frak b\in H^{{\rm even}}(Y;\Lambda_0)$ such that the two filtered $A_{\infty}$-categories
	$\frak{Fuk}^G(X,\omega,\fL)$ and $\frak{Fuk}(Y,\overline \omega,\overline{\fL},\frak b)$ are homotopy 
	equivalent\footnote{See \cite[Definition 8.5]{fu6}.}. If \eqref{quotcorr} is a monotone Lagrangian
	with minimal Maslov number $>2$, then $\fb$ can be chosen to be zero.
\end{theorem}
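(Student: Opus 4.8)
The plan is to realize the equivalence as the $A_\infty$-functor induced by the \emph{reduction correspondence} \eqref{quotcorr}, which I will denote by $\Gamma\subset X^-\times Y$, where $X^-$ is $X$ equipped with the symplectic form $-\omega$. By hypothesis $\Gamma$ is spin, and because $G$ acts freely on $\mu^{-1}(0)$ and on each $\widetilde L_i$, the geometric composition $\Gamma\circ\widetilde L=\widetilde L/G=\overline L$ is an immersed Lagrangian of $Y$ with transverse self-intersections whose defining composition is cut out cleanly; moreover each fibre product $\widetilde L_0\times_X\widetilde L_1\subset\mu^{-1}(0)$ carries a free diagonal $G$-action and descends to $\overline L_0\times_Y\overline L_1$, so that $H^*_G(\widetilde L_0\times_X\widetilde L_1;\Lambda_0)\cong H^*(\overline L_0\times_Y\overline L_1;\Lambda_0)$. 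Thus the quilted-Fukaya-category formalism attaching an $A_\infty$-functor to a Lagrangian correspondence applies (see \cite{fu6,FuFu50}). First I would construct, for each tuple $\{(\widetilde L_i,\iota_i)\}_{0\le i\le k}$ in $\fL$, the moduli spaces of quilted pseudo-holomorphic disks in $X^-\times Y$ with one seam on $\Gamma$ and the remaining boundary arcs on $\widetilde L_{i-1}\times\overline L_i$, equip them with $G$-equivariant Kuranishi structures as in \cite{FuFu5} together with continuous families of perturbations in the de Rham model of Sections \ref{sec:equivariant}--\ref{sec:divisor complement}, and read off a functor $\Psi\colon\frak{Fuk}^G(X,\omega,\fL)\to\frak{Fuk}(Y,\overline\omega,\overline{\fL},\frak b)$ whose underlying map on morphisms is the isomorphism above; the $H^*_G$-module structure on the source then becomes the $H^*(Y)$-module structure pulled back along the classifying map of the principal bundle $\mu^{-1}(0)\to Y$.

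The bulk class $\frak b$ is forced by the degeneration analysis of these quilted moduli spaces. Since $D^2$ is contractible, a holomorphic disk in $Y$ bounding $\overline L$ lifts to a disk in $\mu^{-1}(0)$ bounding $\widetilde L$, so the principal stratum of the limiting moduli space consists of honest disks in $Y$; however, a holomorphic disk in $X$ with boundary on $\widetilde L$ need not remain in $\mu^{-1}(0)$, and in the strip-shrinking (adiabatic) limit such a disk breaks into a disk in $Y$ with gauged holomorphic planes --- affine vortices in the sense of Gaio--Salamon and Ziltener --- attached at interior points. Assembling the virtually counted affine-vortex configurations, weighted by powers of the Novikov parameter, produces a class $\frak b\in H^{\rm even}(Y;\Lambda_0)$ with $\frak b\equiv 0\bmod T^{\epsilon}$: this is precisely the non-classical part of the quantum Kirwan morphism, and by construction $\Psi$ intertwines the operations $\frak m_k^G$ with the $\frak b$-deformed operations on $\overline{\fL}$, including the compatibility of bounding cochains via \eqref{formMC}. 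To upgrade $\Psi$ from a functor to a homotopy equivalence, I would show it is cohomologically full and faithful --- filter the Floer complexes by energy; on the associated graded $\Psi$ reduces to the classical identification $H^*_G(\widetilde L_0\times_X\widetilde L_1)\cong H^*(\overline L_0\times_Y\overline L_1)$, an isomorphism, whence a spectral-sequence comparison over the Novikov ring gives a quasi-isomorphism --- and essentially surjective, since $\overline{\fL}$ is by definition the image of $\fL$ and a quasi-isomorphism of the curved morphism algebras induces a bijection of Maurer--Cartan elements. Independence of $\mathcal N$, $\epsilon$ and the auxiliary data is then the usual cobordism argument; alternatively one may build the inverse from the correspondence $\Gamma$ read in the other direction.

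For the final assertion, assume \eqref{quotcorr} is monotone with minimal Maslov number $>2$. Using the contractibility of the space of $G$-invariant compatible almost complex structures (so that $G$-equivariant transversality is available and no virtual perturbation is needed), the moduli spaces of affine vortices entering $\frak b$ have strictly negative expected dimension unless they are constant: a non-constant affine vortex would force a disk bubble of Maslov index $\le 0$, or a sphere bubble of non-positive Chern number, on the reduced configuration, both of which are excluded exactly as in the monotone theory of \cite{Oh}. Hence the quantum Kirwan correction is classical, $\frak b$ may be taken to be $0$, and $\Psi$ becomes a strict homotopy equivalence $\frak{Fuk}^G(X,\omega,\fL)\simeq\frak{Fuk}(Y,\overline\omega,\overline{\fL})$.

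The main obstacle is the analytic foundation: establishing compactness and gluing for the quilted and gauged moduli spaces that interpolate between $G$-equivariant holomorphic disks in $X$ and bulk-deformed holomorphic disks in $Y$ --- i.e.\ a strip-shrinking theorem together with affine-vortex bubbling analysis --- carried out inside a $G$-equivariant Kuranishi framework compatible with the earlier sections, and identifying the correction term precisely enough, as the non-classical quantum Kirwan class, to match the two $A_\infty$-structures. Controlling bubbling of affine vortices into the locus of $X$ where $G$ fails to act freely (a phenomenon absent on $\mu^{-1}(0)$ itself) is the delicate technical point, and is where I expect the construction of \cite{FuFu5} to do the decisive work.
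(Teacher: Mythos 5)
The paper does not actually supply a proof here; after the statement it merely records, in a remark, that the proof will appear in \cite{FuFu6}, that it relies on equivariant Kuranishi structures and on Lagrangian-correspondence and cobordism arguments in the spirit of \cite{fu3,LL}, and that it is \emph{not} the gauged-sigma-model/quantum-Kirwan approach of Woodward and Tian--Xu (whose affine-vortex count produces the same $\frak b$). Your proposal starts the same way the paper does: build the functor from the moment-map correspondence $\Gamma$ in $X^-\times Y$, with $G$-equivariant Kuranishi structures from \cite{FuFu5}, use $H_G^*(\widetilde L_0\times_X\widetilde L_1)\cong H^*(\overline L_0\times_Y\overline L_1)$ on associated gradeds, and argue essential surjectivity from the Maurer--Cartan bijection. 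That part is correct and is the paper's intended backbone.

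Where you diverge is in identifying $\frak b$. You introduce a strip-shrinking/adiabatic limit and claim the degenerations of your quilted moduli spaces are affine vortices in the sense of Gaio--Salamon and Ziltener. This is the gauged-sigma-model route, which the authors explicitly contrast with their own. As a matter of internal coherence, affine vortices are degenerations of solutions to the \emph{symplectic vortex equations} on $\C$; they are not what bubbles off a quilted pseudo-holomorphic disk in $X^-\times Y$ with seam on $\Gamma$, whose codimension-one limits are described by disk and sphere bubbling in the product. The whole point of the cobordism argument of \cite{fu3,LL}, as adapted in \cite{FuFu6}, is precisely to avoid the adiabatic/strip-shrinking analysis (and the accompanying problem, which you yourself flag, of affine vortices escaping into the non-free locus of $X$): one compares the two curved $A_\infty$-structures via an interpolating family built from Lagrangian cobordisms, and $\frak b$ is extracted from that comparison rather than from an independent vortex count. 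Related to this, your claim in the monotone case that $G$-invariance of $J$ gives full equivariant transversality (``no virtual perturbation needed'') is too optimistic: equivariant transversality for a compact group action cannot in general be achieved even in the free-action regime, which is exactly why the paper keeps the equivariant Kuranishi framework throughout. The monotonicity hypothesis is used, as in \cite{Oh}, to rule out the problematic strata by index reasons, not to dispense with virtual techniques. So: your correspondence-based functor and its cohomological full faithfulness argument track the paper's strategy, but your mechanism for producing $\frak b$ belongs to the rival approach the authors deliberately set aside, and importing it reintroduces precisely the analytic difficulties the cobordism argument is designed to circumvent.
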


\begin{remark}
	The element $\frak b$ in Theorem \ref{quotthem} is related to the {\it quantum Kirwan map} 
	introduced by Woodward in \cite{Wo10,Wo2,Wo3}. Theorem \ref{quotthem} is also closely related to the results 
	of Tian and Xu, written or announced in a series of papers \cite{TX}. 
	Both the works of Woodward and Tian-Xu (as well as various other related works such as \cite{GW})
	are based on the study of gauged sigma model \cite{Mu, CGS}.
	On the other hand, the second author's proof, which will appear in \cite{FuFu6}, 
	uses equivariant Kuranishi structures
	and relies on the idea of employing Lagrangian correspondence and cobordism arguments in a similar way 
	as in \cite{fu3,LL}. We were informed by Max Lipyanskiy that he had similar ideas
	to use Lagrangian correspondences and cobordism arguments instead of gauged sigma model.
\end{remark}


\begin{conjecture}\label{cong107}
	Suppose $(X,\omega)$ and $\fL$ are given as in Situation \ref{situ55}. Moreover, assume that 
	there exists a $G$-invariant normal crossing divisor $D \subset X \setminus \mu^{-1}(0)$
	such that $X \setminus D$ is monotone. Let \eqref{quotcorr} be a monotone Lagrangian 
	submanifold of $(X \setminus D) \times Y$. 
	Then the category of filtered $A_{\infty}$ category $\frak{Fuk}^G(X \setminus D,\omega,\fL)$ 
	 is homotopy equivalent to $\frak{Fuk}(Y,\overline \omega,\overline \fL)$.
\end{conjecture}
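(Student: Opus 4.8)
The plan to prove Conjecture~\ref{cong107} is to establish a divisor-complement refinement of Theorem~\ref{quotthem}, adapting the symplectic-reduction argument of \cite{FuFu6} so that holomorphic curves on the upstairs factor are forbidden to meet $D$. The first observation is that $\mu^{-1}(0)\subset X\setminus D$, since by hypothesis $D\subset X\setminus\mu^{-1}(0)$. Hence the quotient $Y=\mu^{-1}(0)/G$, the quotient correspondence $\Gamma:=\{(x,[x])\mid x\in\mu^{-1}(0)\}$ of \eqref{quotcorr}, and the descended family $\overline\fL$ are unchanged whether we work over $X$ or over $X\setminus D$; in particular the right-hand side $\frak{Fuk}(Y,\overline\omega,\overline\fL)$ involves neither a divisor nor a bulk term. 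I would first record that monotonicity of $X\setminus D$ together with monotonicity of $\Gamma$ forces the Maslov and symplectic-area homomorphisms on $\Gamma$ to be compatible with those on each $\overline L\in\overline\fL$, so that $Y$ and the $\overline L$'s are monotone and $\frak{Fuk}(Y,\overline\omega,\overline\fL)$ is defined in the monotone sense of \cite{Oh} (and \cite{AJ} in the immersed case); the left-hand side $\frak{Fuk}^G(X\setminus D,\omega,\fL)$ is defined by Theorem~\ref{cat-div-comp}.

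Next I would construct an $A_\infty$-functor $\Psi:\frak{Fuk}^G(X\setminus D,\omega,\fL)\to\frak{Fuk}(Y,\overline\omega,\overline\fL)$ by counting quilted holomorphic polygons one of whose patches maps to $X\setminus D$ and another of which maps to $Y$, with seam condition along $\Gamma$, following the reduction argument of \cite{FuFu6,fu3,LL} but with two modifications. The patches mapping to $X\setminus D$ are to carry $G$-equivariant Kuranishi structures in the sense of \cite{FuFu5}, with the Borel construction applied after finite-dimensional reduction as in the proofs of Theorems~\ref{existequiv} and~\ref{thm5151}. More importantly, on those patches one must use the relative Gromov--Witten / RGW-type compactification of Theorem~\ref{mainthm}, in its normal-crossing form from Conjecture~\ref{cong59}, rather than the stable-map compactification in $X$; equivalently one works with the product $(X\setminus D)\times Y$ and the RGW compactification relative to $D\times Y$, in the spirit of the ``$\mathcal M'$'' compactification of \cite[Section~12]{FuFu50} alluded to in Remark~\ref{inv}. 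In that compactification the configurations whose sphere bubbles are absorbed into $D$ (type (B) in the proof of Theorem~\ref{mainthm}) are separated from genuine limits of disks disjoint from $D$, so the codimension-one boundary of the quilted moduli spaces acquires the expected fibre-product form and $\Psi$ is well defined as an $A_\infty$-functor.

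To prove that $\Psi$ is a homotopy equivalence I would run the cobordism argument of \cite{fu3,LL,FuFu6}: build a functor $\Psi'$ in the reverse direction from quilts with seam on the transpose correspondence $\Gamma^{t}$, and show that $\Psi'\circ\Psi$ and $\Psi\circ\Psi'$ are homotopic to the identities by degenerating quilted annuli, which amounts to the fact that the geometric composition of $\Gamma$ with its transpose recovers the diagonal $\Delta_Y$, valid because $G$ acts freely on $\mu^{-1}(0)$ and on each $\widetilde L$. Since $\Gamma$ is monotone --- and, as in the second sentence of Theorem~\ref{quotthem}, one expects its minimal Maslov number to exceed $2$ in the present setting --- no bulk deformation enters and the target is $\frak{Fuk}(Y,\overline\omega,\overline\fL)$ exactly as stated; if one has only monotonicity of $\Gamma$, the same argument yields $\frak{Fuk}^G(X\setminus D,\omega,\fL)\simeq\frak{Fuk}(Y,\overline\omega,\overline\fL,\frak b)$ for a suitable $\frak b\in H^{{\rm even}}(Y;\Lambda_0)$, which in the situations of interest (symplectic cuts of extended moduli spaces) again reduces to the bulk-free statement.

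The hard part will be the analysis in the second step: one must set up, $G$-equivariantly, the RGW-type compactification for \emph{quilted} maps whose upstairs patches land in $X\setminus D$ with $D$ only a normal-crossing divisor, and prove the fibre-product description of its codimension-one boundary. This fuses three ingredients whose details have not been written out --- the quilted (Lagrangian-correspondence) compactness and gluing theory, the normal-crossing divisor-complement compactification of Conjecture~\ref{cong59}, and the equivariant Kuranishi package of \cite{FuFu5} --- with the clean-intersection and fibre-product bookkeeping of \cite{AJ} needed to carry immersed Lagrangians through the whole argument. Once this package is available, the algebraic part (construction of $\Psi$, of $\Psi'$, and of the homotopies) should follow the by-now-standard pattern.
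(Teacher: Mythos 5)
This statement is a \emph{conjecture} in the paper, not a theorem: the authors do not give a proof but only a remark immediately following it, which says that the main difficulty is constructing $\frak{Fuk}^G(X\setminus D,\omega,\fL)$ when $D$ is a normal crossing divisor, that the smooth-divisor case of that category is Theorem~\ref{cat-div-comp}, and that ``combination of the techniques used in verifying Theorems~\ref{cat-div-comp} and~\ref{quotthem} prove the above conjecture in the special case that $D$ is a smooth divisor.'' Your proposal matches this outline exactly: you identify the reduction argument of Theorem~\ref{quotthem} (quilted correspondences, cobordism arguments following \cite{fu3,LL,FuFu6}, equivariant Kuranishi structures), combine it with the RGW-type divisor-complement compactification underlying Theorems~\ref{mainthm} and~\ref{cat-div-comp}, and correctly locate the open bottleneck at the normal-crossing extension of Conjecture~\ref{cong59}, plus the further work of running that compactification equivariantly and for quilted moduli spaces. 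Your observation that $\mu^{-1}(0)\subset X\setminus D$, so that $Y$, $\Gamma$, and $\overline\fL$ are unaffected by removing $D$ and the right-hand side carries neither divisor nor bulk, is also the right first step.

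Two small points worth flagging. First, as you note in passing, the conjecture as stated asks only that $\Gamma$ of \eqref{quotcorr} be a monotone Lagrangian in $(X\setminus D)\times Y$, but the bulk-free conclusion is modelled on the second sentence of Theorem~\ref{quotthem}, which additionally requires minimal Maslov number $>2$; so either that hypothesis is implicit, or one should expect a bulk term $\fb$ in general as you describe. This is a genuine (if minor) imprecision in the statement rather than in your argument. Second, your strategy is conditional in two nested ways --- on Conjecture~\ref{cong59} and on carrying that compactification through the quilted, $G$-equivariant setting --- and you are explicit about this; given that the paper itself leaves these as open ingredients, your proposal is as complete as a ``proof'' of this conjecture can presently be, and it takes the same route the authors indicate.
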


\begin{remark}
	The above generalization of Theorem \ref{quotthem} is related to Conjecture \ref{admissible-AF-synp-quo}. 
	By picking $X=\widehat{R}(\Sigma,\mathcal N,\mathcal F,p,{\epsilon})$, 
	this conjecture implies the predicted relationship between 
	$I_{symp}^*(M,E)$ and ${\widetilde I}_{symp}^*(M,E)$ in Conjecture \ref{admissible-AF-synp-quo}. 
	The main difficulty with this 
	conjecture is to define $\frak{Fuk}^G(X \setminus D,\omega,\fL)$  for the case that $D$ is a normal crossing divisor. 
	Existence of this $A_\infty$-category in the special case that $D$ is a 
	smooth divisor is the content of Theorem \ref{cat-div-comp}. 
	Combination of the techniques used in verifying Theorems \ref{cat-div-comp} and \ref{quotthem} 
	prove the above conjecture in the special case that $D$ is a smooth divisor.
\end{remark}

\bibliographystyle{amsalpha}
\bibliography{references}

\end{document}